\newtheorem{theorem}{Theorem}[section]
\newtheorem{corollary}[theorem]{Corollary}
\newtheorem{proposition}[theorem]{Proposition}
\newtheorem{definition}[theorem]{Definition}
\newtheorem{lemma}[theorem]{Lemma}
\newtheorem{claim}[theorem]{Claim}
\newtheorem*{theorem*}{Theorem}
\newtheorem*{proposition*}{Proposition}
\newtheorem*{definition*}{Definition}
\newtheorem*{lemma*}{Lemma}
\newtheorem*{claim*}{Claim}
\newtheorem*{corollary*}{Corollary}
\newtheorem*{convention*}{Convention}
\newtheorem{observation}[theorem]{Observation}
\theoremstyle{definition}
\theoremstyle{remark}
\newtheorem{rem}[theorem]{Remark}
\newtheorem{remark}[theorem]{Remark}
\newtheorem*{rem*}{Remark}
\newcommand{\wt}[1]{\widetilde{#1}}
\newcommand\bR{\mathbb R}
\newcommand\R{\mathbb R}
\newcommand\bZ{\mathbb Z}
\DeclareMathOperator{\id}{id}
\newcommand{\flow}{ \varphi }
\newcommand{\hflow}{ \wt{\varphi} }
\newcommand\orb{ \mathcal O_{\flow} }
\newcommand\fs{\mathcal F^{s} }
\newcommand\hfs{\widetilde{\mathcal F}^{s} }
\newcommand\fu{\mathcal F^{u} }
\newcommand\hfu{\widetilde{\mathcal F}^{u} }
\newcommand\bfs{\bar{\mathcal F}^{s} }
\newcommand\bfu{\bar{\mathcal F}^{u} }
\newcommand\fss{\mathcal F^{ss} }
\newcommand{\cF}{\mathcal{F}}
\newcommand{\cO}{\mathcal{O}}
\newcommand{\acts}{\curvearrowright}
\newcommand{\Homeo}{\mathrm{Homeo}}
\newcommand{\Stab}{\mathrm{Stab}}
\newcommand{\Aut}{\mathrm{Aut}}
\newcounter{notes}
\title{Reconstructing flows from the orbit space}
\author[Thomas Barthelm\'e]{Thomas Barthelm\'e}
\address{Queen's University, Kingston, Ontario}
\email{thomas.barthelme@queensu.ca}
\urladdr{sites.google.com/site/thomasbarthelme}
\author[Sergio Fenley]{Sergio Fenley}
 \address{Florida State University, Tallahassee, FL}
 \email{sfenley@fsu.edu}
\author[Kathryn Mann]{Kathryn Mann}
 \address{Cornell University, Ithaca, NY}
 \email{k.mann@cornell.edu}
\urladdr{https://e.math.cornell.edu/people/mann}
\begin{document}

\begin{abstract}
We give some simple conditions under which a group acting on a bifoliated plane comes from the induced action of a pseudo-Anosov flow on its orbit space. An application of the strategy is a less technical proof of a result of Barbot that the induced action of an Anosov flow on its orbit space uniquely determines the flow up to orbit equivalence. In another application, we recover an expansive flow on a 3-manifold from the action of a group on a \emph{loom space} as defined by Schleimer and Segerman.
\end{abstract}
\maketitle

\section{Introduction} 
Following work of Barbot \cite{Bar95a}, the second
author \cite{FEn94} and Mosher \cite{FM01}, a pseudo-Anosov flow on a compact 3-manifold $M$ gives rise to an action of $\pi_1(M)$ on a topological plane, called the {\em orbit space} with two topologically transverse, invariant foliations (possibly with prong singularities) induced by the stable and unstable foliations of the flow.  
In this work, we give a simple condition under which an action of an arbitrary torsion-free group $G$ on a bifoliated plane is the one induced by a pseudo-Anosov flow on a $3$-manifold $M$, thus implying that $G$ is a 3-manifold group.  This condition is necessary and sufficient for the class of transversally orientable pseudo-Anosov flows. For noncompact manifolds, we treat the case of foliation-preserving expansive flows. (The notions of expansive and pseudo-Anosov flows are known to coincide in the compact case. In the non-compact case, there is not a well-developed theory of such flows, see Remark \ref{rem_expansive_vs_pA_noncompact} for more details).
As described below, our condition generalizes Thurston's notion of an {\em extended convergence group} from \cite{Thu97}, which corresponds to the special case when the plane has a particular global structure called {\em skew}.  

We also use the same strategy to give a simple proof of a theorem of Barbot that (for Anosov flows) the action of $\pi_1(M)$ on the orbit space determines the flow up to orbit equivalence.  This does not use any hypothesis that $G$ preserves orientation of $P$ or leafwise orientation of either foliation. 

As a second application, we show in Section \ref{sec_applications} that groups of automorphisms of {\em loom spaces} (preserving orientation) are 3-manifold groups and their action naturally gives rise to an expansive flow on a 3-manifold which is atoroidal in the sense that any $\bZ \times \bZ$ subgroup of $G$ fixes a (unique) cusp.   
After an earlier version of this work was circulated, we learned that such groups of automorphisms of loom spaces were already known to be 3-manifold groups by the work of Baik--Jung--Kim \cite[Theorem 17.15]{BJK25}. Our approach gives an alternative argument which yields a manifold together with an associated expansive flow, and is meant as an illustration of the use of the main result.

Throughout this work, unless otherwise stated, pseudo-Anosov flows are assumed to be {\em topologically pseudo-Anosov} and not necessarily have additional smooth structure or strong stable/unstable foliations.  See \cite[Definition 1.1.10]{BM_book} for a precise definition and discussion of related definitions. 

\subsection*{Statement of results} 
Let $(P, \cF_1, \cF_2)$ denote a topological plane with two topologically transverse 1-dimensional foliations, possibly with prong singularities but at most one on any given leaf.  Such a structure is called a {\em bifoliated plane}, and $\cF_i(x)$ denotes the leaf of $\cF_i$ containing $x \in P$. 
If a foliation $\cF_i$ is equipped with a leafwise orientation (varying continuously between leaves), we denote by $\cF_i^>(x)$ the connected component of $\cF_i(x)\smallsetminus \{x\}$ on the positive side of $x$ (or connected components, if $x$ is a singularity). Notice that such an orientation exists if and only if $\cF_i$ has no odd prong singularities. In particular, $\cF_1$ admits a leafwise orientation if and only if $\cF_2$ admits one.

\begin{definition} \label{def_W1}
Assuming $(P, \cF_1, \cF_2)$ is nonsingular,  define the space 
\[W_1^> := \{(x,t) \in P \times P \mid t \in \cF_1^>(x)\} \]
equipped with the subset topology from $P \times P$.
\end{definition} 
Topologically, it is easy to show that $W_1^> \cong \bR^3$. The space $W^>_2$ is defined analogously.  
Since the foliations may always be relabeled, we make the convention of stating all results in terms of $W_1^>$, but obviously the roles of $1$ and $2$ may be swapped in any result.  
When $\cF_i$ are leafwise oriented but $(P, \cF_1, \cF_2)$ has singularities, we will modify the definition of $W^>_i$ to account for the presence of prongs by passing to a quotient -- see the discussion after Theorem \ref{thm_compact_singular}.  

By \cite{Fen98}, if a pseudo-Anosov flow on a compact manifold is not orbit-equivalent to the suspension of a hyperbolic linear map of the torus, then its orbit space has no {\em infinite product regions}, precisely: 
 \begin{definition} 
 If $(P, \cF_1, \cF_2)$ is a bifoliated plane, a {\em $\cF_i$-infinite product region} is a subset of $P$ which is the image of a proper embedding from $[0,1] \times [0, \infty) \subset \bR^2$ with its product foliation into $P$, such that images of rays $\{p\} \times [0, \infty)$ are in $\cF_i$. 
 \end{definition}
Suspension Anosov flows are well understood and their orbit spaces are trivially bifoliated planes.  Thus, it is natural to exclude such examples, and we do so by working on planes without infinite product regions in at least one foliation. 
With this set-up, 
we give a necessary and sufficient condition for groups of automorphisms (foliation-preserving homeomorphisms) to be induced from transversally orientable pseudo-Anosov flows on compact 3-manifolds.  
Here and in what follows, $\Aut(P)$ denotes the group of homeomorphism of $P$ that preserve each foliation (sending leaves to leaves),  $\Aut^+_i(P)$  is the subgroup of homeomorphisms preserving a leafwise orientation of $\cF_i$, in contexts where such an orientation exists, and $\Aut^+(P)$ those preserving orientations of both foliations (if such exist).  

Our first result is a reconstruction theorem, for simplicity we state it first for nonsingular planes: 
\begin{theorem} \label{thm_compact}
Let $(P, \cF_1, \cF_2)$ be a nonsingular bifoliated plane with no $\cF_1$-infinite product region, and $G$ a torsion-free subgroup of $\Aut^+_1(P)$. 
If $G$ acts properly discontinuously and cocompactly on $W^>_1$, then $M = W^>_1/G$ is a compact $3$-manifold equipped with a topological Anosov flow $\flow$, such that $(P, \cF_1, \cF_2)$ is the orbit space of $\flow$, and the action of $G$ agrees with the  action of $\pi_1(M)$ induced by $\flow$.

Consequently, $P$ also has no $\cF_2$-infinite product regions, and
if $G$ also preserves orientation on $\cF_2$, then 
$G$ acts properly discontinuously and cocompactly on $W^>_2$ as well.  
\end{theorem}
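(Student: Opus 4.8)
The plan is to build the manifold $M$ and the flow $\flow$ directly from the space $W^>_1$ and the $G$-action. First I would show that $W^>_1$ carries a natural flow: there is an obvious $\bR$-action on $W^>_1$, namely the ``vertical'' flow that moves the second coordinate $t$ along the leaf $\cF_1(x)$ in the positive direction (reparametrized so that it is complete), fixing $x$. Since $G$ acts on $W^>_1$ by maps of the form $(x,t)\mapsto (gx, gt)$ and $g$ preserves $\cF_1$ and its leafwise orientation, this $\bR$-action commutes with $G$, so it descends to a flow on the quotient $M := W^>_1/G$. By the properly discontinuous, cocompact, free (torsion-free) hypothesis, $M$ is a closed 3-manifold with $\pi_1(M)\cong G$ and universal cover $W^>_1\cong\bR^3$. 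The projections of $\cF_1$ and $\cF_2$ to the factor $P$ give two transverse foliations on $W^>_1$ whose leaves, together with the flow direction, span codimension-one foliations; these are $G$-invariant and hence descend to $M$, and one checks they are the (weak) stable and unstable foliations of $\flow$, with the contraction/expansion coming from the way nearby leaves of $\cF_1$ (resp.\ $\cF_2$) converge in $P$. The no-infinite-product-region hypothesis is exactly what rules out the degenerate (suspension) case and gives genuine expansivity; this is where one invokes \cite{Fen98} and the structure theory, so that $\flow$ is a topological Anosov flow whose orbit space, by construction, is $(P,\cF_1,\cF_2)$ and whose induced $\pi_1(M)$-action is the given $G$-action.

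For the ``consequently'' clause, the point is that once we know $(P,\cF_1,\cF_2)$ is \emph{the} orbit space of a topological Anosov flow on a compact manifold, the symmetry between stable and unstable foliations is automatic: the hypotheses on $\cF_1$ were used only to produce the flow, and the flow treats $\cF_1$ and $\cF_2$ on equal footing (reversing time swaps them). So first I would argue that since $\flow$ is an Anosov flow on a compact manifold and is not orbit-equivalent to a suspension (which would force an $\cF_1$-infinite product region, contradicting the hypothesis, via \cite{Fen98}), its orbit space has no infinite product regions in \emph{either} foliation; in particular $P$ has no $\cF_2$-infinite product region. Then, running the same construction with the reversed flow $\flow^{-1}$ (equivalently, swapping the roles of $1$ and $2$), the orbit space is $(P,\cF_2,\cF_1)$ and the induced $\pi_1(M)$-action on $W^>_2$ is again the $G$-action; since $M$ is compact this action is properly discontinuous and cocompact. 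The extra hypothesis that $G$ preserve the leafwise orientation of $\cF_2$ is needed precisely so that $W^>_2$ (rather than an unoriented version) is the relevant space on which $G$ acts, matching the statement.

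The main obstacle, and the step I expect to require the most care, is verifying that the flow $\flow$ on $M$ is genuinely (topologically) Anosov rather than merely a nonsingular flow with two transverse invariant foliations. Concretely: one must check local product structure, the contracting/expanding behavior of the foliations, and expansivity of the flow, translating the purely topological data of how leaves of $\cF_1$ and $\cF_2$ separate and converge in $P$ into the dynamical axioms. This is where the hypothesis of no $\cF_1$-infinite product region does real work, and where one needs the characterization (from \cite{Fen98} and the surrounding theory) that a bifoliated plane without infinite product regions, arising as a cocompact quotient in this way, is exactly the orbit space of an Anosov flow. The remaining verifications --- that $M$ is a manifold, that the $\bR$-action descends and is complete, that $\pi_1(M)=G$, and that the reconstructed orbit-space action matches the original --- are comparatively routine given the properly discontinuous cocompact free action on $\bR^3$.
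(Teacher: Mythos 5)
The skeleton of your construction matches the paper's: the ``constant first coordinate'' foliation on $W^>_1$, the quotient $M=W^>_1/G$, and the identification of $P$ with the orbit space are all as in Section \ref{sec_nonsingular_main_theorem}, and your treatment of the ``consequently'' clause (reverse the flow / invoke \cite{Fen98} plus the converse direction) is essentially the paper's. But there is a genuine gap at exactly the step you flag as the hardest one: you do not prove expansivity, you outsource it. You write that one ``invokes \cite{Fen98} and the surrounding theory'' for ``the characterization \dots that a bifoliated plane without infinite product regions, arising as a cocompact quotient in this way, is exactly the orbit space of an Anosov flow.'' No such characterization exists in \cite{Fen98}; that reference only gives the \emph{converse} implication (a pseudo-Anosov flow not orbit-equivalent to a suspension has no infinite product regions in its orbit space). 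The forward implication is precisely what Theorem \ref{thm_compact} asserts, so citing it is circular.

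The paper closes this gap with a direct argument (Proposition \ref{prop_expansive_1}): one covers $M$ by flow boxes that are projections of ``good neighborhoods'' $(R_x\times R_y)\cap W^>_1$ with $\overline{R_x}\cap\overline{R_y}=\emptyset$ and matching $\cF_1$-saturations, builds a metric whose Lebesgue number for this cover is positive, and then shows that if two orbits $(x,y_t)$ and $(z,w_t)$ stay in common good neighborhoods for all time, then first $z\in\cF_2(x)$ (otherwise for $t$ very negative the two rectangles of a good neighborhood would be forced to intersect), and then $x=z$ --- because $x\neq z$ would produce an $\cF_1$-infinite product region bounded by $\cF_1(x)$, $\cF_1(z)$ and the $\cF_2$-segment between them. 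This is where the no-infinite-product-region hypothesis does its work, and it is a concrete topological argument, not an appeal to prior structure theory. Finally, to pass from ``expansive without fixed points on a compact $3$-manifold'' to ``topological Anosov,'' the paper invokes Inaba--Matsumoto and Paternain, a step your proposal also omits (your claim that the contraction/expansion of the foliations can be ``checked'' directly is not substantiated and is not how the paper proceeds). To make your proposal a proof you would need to supply the good-neighborhood/expansivity argument or an equivalent.
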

In the statement above, by ``$(P,\cF_1,\cF_2)$ is the orbit space of $\varphi$'' we mean
that there is a natural (and obvious from the construction) homeomorphism
from the orbit space of $\flow$ to $P$, which sends the stable and unstable
foliations in the orbit space to the pair $( \cF_1, \cF_2 )$.
The same theorem holds by symmetry with the roles of $W^>_1$ and $W^>_2$ reversed. 

For singular bifoliated planes, we obtain the following:

\begin{theorem} \label{thm_compact_singular}
Let $(P, \cF_1, \cF_2)$ be a possibly singular bifoliated plane with leafwise orientations and no $\cF_1$-infinite product region, and $G$ a torsion-free subgroup of $\Aut^+_1(P)$. 
Assume that for any prong singularity $p\in P$, the stabilizer of $p$ in $G$ is cyclic.
Then there exists a topological space $W_1^\ast$, homeomorphic to $\bR^3$, obtained as a quotient of $W_1^>$
with a natural induced action of $G$.

If $G$ acts properly discontinuously and cocompactly on $W^\ast_1$, then $M = W^\ast_1/G$ is a compact $3$-manifold equipped with a pseudo-Anosov flow $\flow$, such that $(P, \cF_1, \cF_2)$ is the orbit space of $\flow$, and the action of $G$ agrees with the  action of $\pi_1(M)$ induced by $\flow$.

Consequently, $P$ also has no $\cF_2$-infinite product regions, and
if $G$ also preserves orientation on $\cF_2$, then 
$G$ acts properly discontinuously and cocompactly on $W^\ast_2$ as well.    
\end{theorem}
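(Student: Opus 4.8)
The plan is to run the argument of Theorem~\ref{thm_compact} with one extra ingredient, a resolution of the fibers of $W_1^>$ lying over the prong singularities of $(P,\cF_1,\cF_2)$. Recall that on the nonsingular locus $W_1^>$ carries a flow, invariant under $\Aut^+_1(P)$, whose orbit space is canonically $P$ and whose weak stable and unstable foliations are the ones whose leaves project to $\cF_1$ and $\cF_2$; for nonsingular planes this already makes $W_1^>\cong\bR^3$ into $\widetilde M$. When $p$ is a $2k$-prong singularity of $\cF_1$, $\cF_1^>(p)$ has $k$ components and, worse, for points $x$ on the separatrices of the singular leaf the half-leaf $\cF_1^>(x)$ ``branches'' at $p$; as a consequence $W_1^>$ is no longer a $3$-manifold along the corresponding fibers (it carries extra lower-dimensional strata, and $k$ half-lines where there should be a single orbit line). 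So the first task is to define a quotient $W_1^\ast$ of $W_1^>$ that collapses these redundant strata and glues the $k$ half-lines over each prong into one line, and to check that $W_1^\ast$ is again homeomorphic to $\bR^3$, with the flow descending and with an $\Aut^+_1(P)$-action (hence a $G$-action).

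I would carry this out by a local analysis near a prong fiber. Using the standard $2k$-prong model for $(P,\cF_1,\cF_2)$ around $p$, one identifies precisely the non-manifold locus of $W_1^>$ over $p$ and the minimal identifications needed to repair it, and one checks that the repaired quotient has a neighborhood of the collapsed orbit homeomorphic to $\bR\times D^2$ carrying the $2k$-prong model of a topological pseudo-Anosov singular orbit, while the required identifications are defined intrinsically from $\cF_1,\cF_2$ and their leafwise orientations, hence are $\Aut^+_1(P)$-equivariant. The cyclic hypothesis enters as follows: an element of $\Stab_G(p)$ permutes the $k$ positive separatrices of $p$ and, preserving the foliations and the leafwise orientation of $\cF_1$, acts on them as a rotation; cyclicity of $\Stab_G(p)$ then forces it to act near the collapsed orbit by translations composed with rotations of the transverse disk, so that any cocompact quotient near that orbit is a solid torus rather than an orbifold, i.e.\ $M$ will be a manifold. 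I expect this construction --- pinning down the exact quotient and verifying $W_1^\ast\cong\bR^3$ with the flow and $G$-action as stated --- to be the main obstacle; it establishes the first assertion of the theorem, and everything afterwards is a formal repetition of the proof of Theorem~\ref{thm_compact}.

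Now assume $G$ acts properly discontinuously and cocompactly on $W_1^\ast$. Since $W_1^\ast\cong\bR^3$ is simply connected, $G$ is torsion-free, and the action is properly discontinuous, it is free; hence $M=W_1^\ast/G$ is a $3$-manifold with universal cover $W_1^\ast$ and $G=\pi_1(M)$, and cocompactness makes $M$ compact. The flow descends to a flow $\flow$ on $M$ whose lift has orbit space $P$, weak stable and unstable foliations projecting to $\cF_1$ and $\cF_2$, and singular orbits exactly the collapsed prong orbits. That $\flow$ is a topological pseudo-Anosov flow follows as in Theorem~\ref{thm_compact}, using that $P$ is a Hausdorff plane with no $\cF_1$-infinite product region (which also prevents $\flow$ from being a suspension Anosov flow). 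By construction $(P,\cF_1,\cF_2)$ is the orbit space of $\flow$ and the $\pi_1(M)=G$-action on $P$ is the given one.

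Finally, for the ``consequently'' clause: $\flow$ is a pseudo-Anosov flow on a compact manifold and, by the above, is not orbit-equivalent to a suspension of a hyperbolic toral automorphism (its orbit space has no $\cF_1$-infinite product region), so \cite{Fen98} gives that its orbit space $P$ has no infinite product regions at all --- in particular none in $\cF_2$. Suppose now $G$ also preserves a leafwise orientation of $\cF_2$. The prong hypothesis for $\cF_2$ then holds automatically, since each prong of $\cF_2$ is a prong of $\cF_1$ at the same point with the same $G$-stabilizer, so $W_2^\ast$ is defined; and since $W_2^\ast$ is the same recipe applied to the orbit space $(P,\cF_1,\cF_2)$ of $\flow$, it is $G$-equivariantly homeomorphic to $\widetilde M$ with $G$ acting as deck transformations. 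Hence $G$ acts properly discontinuously on $W_2^\ast$ with quotient the compact manifold $M$.
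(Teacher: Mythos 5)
Your overall architecture matches the paper's: build a quotient $W_1^\ast$ of $W_1^>$ that repairs the fibers over singular leaves, check $W_1^\ast\cong\bR^3$ with a $G$-action, and then rerun the proof of Theorem~\ref{thm_compact} (with good neighborhoods adapted to the four singular/nonsingular configurations of the two coordinates). The ``consequently'' clause is also handled the same way, via \cite{Fen98} and Theorem~\ref{thm_converse}. But there is a genuine gap at the one step you yourself identify as the main obstacle, and it concerns exactly the hypothesis the theorem singles out.

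You assert that the identifications gluing the $k$ rays of $\cF_1^>(p)$ into a single line are ``defined intrinsically from $\cF_1,\cF_2$ and their leafwise orientations, hence are $\Aut^+_1(P)$-equivariant.'' This is not true: there is no canonical homeomorphism between two distinct rays $r_1(p)$ and $r_j(p)$ of a singular leaf determined by the bifoliation alone (the transverse foliation gives no holonomy between entire rays, and a ray of a leaf carries no preferred parametrization preserved by all automorphisms). The identification $\sigma^p_j\colon r_1(p)\to r_j(p)$ must be \emph{chosen}, and the only requirement one can hope to impose is equivariance under $G$ (not under all of $\Aut^+_1(P)$). This is precisely where the hypothesis that $\Stab_G(p)$ is cyclic (together with discreteness of its action on a ray) is used: one defines $\sigma^p_j$ arbitrarily on a fundamental domain for the cyclic stabilizer of $r_1(p)$ and extends equivariantly, then pushes the choice around the $G$-orbit of $p$. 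By contrast, you spend the cyclic hypothesis on ensuring that the quotient near the collapsed orbit is a solid torus rather than an orbifold --- but that issue is already handled by torsion-freeness of $G$ plus proper discontinuity (which force the action on $W_1^\ast$ to be free, as in Lemma~\ref{lem_torsion_free}). So as written your construction of $W_1^\ast$ has no valid justification for the existence of a $G$-invariant gluing, which is the first assertion of the theorem; once that is repaired along the lines above, the rest of your argument does go through as in the paper.
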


The idea to obtain the space $W_1^\ast$ in the singular case is simple: The reason we cannot just consider $W_1^>$ is that this space is not homeomorphic to $\bR^3$ as, for any $2n$-prong $p$, $\cF_1^>(p)$ consists of $n$ disjoint rays. Thus, in order to obtain a $3$-dimensional space, one needs to identify the different rays of $\cF_1^>(p)$. This identification can be done arbitrarily, as long as it is invariant under the group action. See Definition \ref{def_W_singular} for the precise statement.  
The hypothesis that $\mathrm{Stab}_G(p)$ is cyclic is used to ensure that there exists such an identification. (We will also give other conditions on $G$ that ensures that $\mathrm{Stab}_G(p)$ is cyclic, see Lemma \ref{lem_cyclic_stabilizer}.)   

When $P$ is nonsingular, the definition of $W_i^\ast$ is as a trivial quotient of $W_i^>$ and thus they are equal.  Going forward, to streamline theorem statements we use this convention so that $W_i^\ast =W_i^>$ in the nonsingular case.  The two separate notations are re-introduced in proofs where it is important to keep track of prongs or not.  

Generalizing the above results to the noncompact case, we have: 
\begin{theorem} \label{thm_noncompact}
Under the hypotheses of Theorem \ref{thm_compact} or Theorem \ref{thm_compact_singular}, if the action of $G$ on $W_1^\ast$ is properly discontinuous but not cocompact, then $M = W_1^\ast/G$ admits a foliation-preserving expansive flow whose orbit space is $(P, \cF_1, \cF_2)$.
\end{theorem}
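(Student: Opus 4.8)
The plan is to run the same construction as in Theorems \ref{thm_compact} and \ref{thm_compact_singular}, observing that the cocompactness hypothesis was only used at the very end to conclude that the resulting manifold is compact, and never in building the flow itself. Concretely, under the hypotheses of Theorem \ref{thm_compact} (resp. \ref{thm_compact_singular}) the space $W_1^\ast$ is homeomorphic to $\bR^3$ and carries a natural $G$-action; I would first recall (from the earlier proof) that this $\bR^3$ comes equipped with a foliation by oriented lines — the ``flow direction'' — transverse to a pair of codimension-one foliations pulled back from $\cF_1$ and $\cF_2$, all of which are $G$-invariant. Since $G$ is torsion-free and acts properly discontinuously on $W_1^\ast \cong \bR^3$, the quotient $M = W_1^\ast/G$ is an open $3$-manifold with $\pi_1(M) \cong G$ and universal cover $W_1^\ast$. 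The line foliation descends to a $1$-dimensional foliation on $M$; reparametrizing, this is the orbit foliation of a (topological) flow $\flow$ on $M$, and the two transverse codimension-one foliations descend to the (weak) stable and unstable foliations of $\flow$.

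Next I would identify $(P, \cF_1, \cF_2)$ with the orbit space of $\flow$: by construction the leaf space of the lifted line foliation on $W_1^\ast$ is canonically $P$ (this is exactly how $W_1^\ast$ was built as a bundle-like object over $P$), the $G$-action on this leaf space is the given action on $P$, and the induced foliations on the leaf space are $\cF_1, \cF_2$. This is the same identification used in the compact case and requires no cocompactness. The final point is to check that $\flow$ is \emph{foliation-preserving and expansive} in the appropriate sense: foliation-preservation is immediate since the stable/unstable foliations were built $\flow$-invariant by construction. For expansivity, one uses that $\cF_1$ and $\cF_2$ are topologically transverse with at most one prong per leaf and, crucially, that $P$ has no $\cF_1$-infinite product region — this is precisely the condition that rules out the ``product'' behavior obstructing expansivity, and it is exactly the ingredient that in the compact case forces $\flow$ to be pseudo-Anosov. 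The argument that no infinite product region implies the orbit-expansivity estimate should be quotable verbatim from the proof of Theorem \ref{thm_compact_singular}, since that argument is local in $M$ / equivariant on $W_1^\ast$ and does not see compactness.

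I expect the main obstacle to be purely bookkeeping: making precise what ``expansive flow'' should mean on a noncompact $3$-manifold and checking that the definition extracted from the compact case still makes sense and still follows from the no-infinite-product-region hypothesis. In the compact setting, expansivity is usually phrased with a uniform expansivity constant; on a noncompact manifold one either works with a notion that is local/leafwise (no uniform constant, matching the discussion flagged in Remark \ref{rem_expansive_vs_pA_noncompact}) or one must be content with the equivariant statement on $W_1^\ast$. I would therefore state expansivity in the lifted, $G$-equivariant form on $W_1^\ast$ — two orbits that stay ``foliation-close'' for all time coincide — which is manifestly what the no-infinite-product-region condition gives, and note that this is the natural noncompact generalization. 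Everything else (that $M$ is a manifold, that $\flow$ is well-defined, that the orbit space is $(P,\cF_1,\cF_2)$) is obtained by simply deleting the word ``cocompact'' from the proofs of Theorems \ref{thm_compact} and \ref{thm_compact_singular}.
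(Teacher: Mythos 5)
Your high-level plan matches the paper's: the proofs of Theorems \ref{thm_compact} and \ref{thm_noncompact} are indeed run simultaneously, with cocompactness invoked only at the very end (via Inaba--Matsumoto and Paternain) to upgrade ``expansive'' to ``pseudo-Anosov''. The construction of $M$, the flow, the invariant foliations, and the identification of the orbit space with $(P,\cF_1,\cF_2)$ all go through exactly as you describe.

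However, there is a genuine gap in your treatment of expansivity, which you dismiss as ``purely bookkeeping''. The theorem asserts that $\flow$ is expansive in the sense of Definition \ref{def_expansive}, which is a \emph{metric} statement: there must exist a metric $d$ on $M$ and a uniform $\delta>0$ such that lifted orbits staying $\delta$-close forever (up to reparametrization) coincide. Your fallback --- restating expansivity in a ``foliation-close'', $G$-equivariant form on $W_1^\ast$ --- proves a weaker statement, not the theorem as stated. The content that is genuinely new in the noncompact case is the construction of a suitable metric: the paper fixes a countable, locally finite cover $\mathcal U$ of $M$ by flow boxes that are injective projections of good neighborhoods, and then rescales an arbitrary complete metric by bump functions on the annuli $K_n\smallsetminus K_{n-1}$ of a compact exhaustion so that the Lebesgue number of $\mathcal U$ becomes uniformly positive; only then does metric $\delta_0$-closeness force two orbits into a common flow box, which is where the no-infinite-product-region hypothesis takes over. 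Without this step, the metric could degenerate at infinity and $\delta$-closeness would carry no foliation information. A second, related point: even after this construction the flow boxes in $\mathcal U$ may have unbounded diameter (Remark \ref{rem_unbounded_diameter}), so the step in Proposition \ref{prop_expansive_1} showing that the intermediate points $v_t$ escape compact sets of $P$ cannot be ``quoted verbatim'' from the compact argument; it needs the local finiteness of $\mathcal U$ (only finitely many boxes meet a given compact set). So your skeleton is right, but the metric construction --- the one step that actually distinguishes the noncompact case --- is missing.
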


\begin{rem}
We note that, instead of assuming that $G$ is torsion free, one can equivalently assume that the action of $G$ on $W_1^\ast$ is free. 
This hypothesis is used only to ensure that $W^\ast_1/G$ is a manifold, not an orbifold.  
Furthermore, in the case of a nonsingular bifoliated plane, if $G$ preserves both transverse orientations, then it is not necessary to suppose that $G$ is torsion free to prove that the action of $G$ on $W^\ast_1$ is free: this is an easy consequence of proper discontinuity and preservation of orientation.  See Lemma \ref{lem_torsion_free}. 
\end{rem}

As a converse to Theorems \ref{thm_compact} and \ref{thm_compact_singular} we show:
\begin{theorem}  \label{thm_converse} 
Suppose that $\flow$ is a transversally orientable pseudo-Anosov flow on a compact $3$-manifold $M$,  not orbit-equivalent to a suspension of an Anosov diffeomorphism. Then the orbit space is a bifoliated plane without infinite product regions, and $\pi_1(M)$ acts properly discontinuously and cocompactly on the spaces $W^\ast_1$ and $W^\ast_2$ associated to the stable and unstable foliations of its orbit space.  
\end{theorem}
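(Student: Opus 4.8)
The plan is to verify in turn the three claims of the statement: that the orbit space $P$ of $\flow$ is a bifoliated plane, that it has no infinite product regions, and --- the substantive one --- that $\pi_1(M)$ acts properly discontinuously and cocompactly on $W_1^\ast$ and $W_2^\ast$. For the first, by the work of Barbot, Fenley and Mosher \cite{Bar95a, FEn94, FM01} (see also \cite{BM_book}) the orbit space $P=\M/\hflow$ is a topological plane carrying two topologically transverse singular foliations $\cF_1,\cF_2$, the images of the weak stable and weak unstable foliations of $\hflow$, and the $\pi_1(M)$-action on $\M$ descends to an action by automorphisms of $(P,\cF_1,\cF_2)$. Since $\flow$ is transversally orientable, the weak stable and weak unstable foliations of $M$ are transversally orientable; as the flow direction is tangent to the weak leaves, their transverse orientations descend (equivariantly) to transverse orientations of $\cF_1$ and $\cF_2$ on $P=\M/\hflow$, equivalently to $\pi_1(M)$-invariant leafwise orientations of $\cF_2$ and $\cF_1$. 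Thus $\pi_1(M)\subset\Aut^+(P)$ and $W_1^\ast,W_2^\ast$ are defined; in the singular case each prong of $P$ is the image of a periodic singular orbit, so its stabilizer in $\pi_1(M)$ is infinite cyclic and the ray-identifications defining $W_i^\ast$ may be taken $\pi_1(M)$-equivariantly, as in Definition \ref{def_W_singular}. Finally, since $\flow$ is not orbit-equivalent to a suspension Anosov flow --- equivalently, not to the suspension of a hyperbolic automorphism of $T^2$, as surface Anosov diffeomorphisms exist only on $T^2$ --- \cite{Fen98} gives that $P$ has no infinite product regions in either foliation.

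It remains to prove the properly discontinuous and cocompact action on $W_1^\ast$; the statement for $W_2^\ast$ then follows by the same argument with $\cF_1$ and $\cF_2$ exchanged. For this it suffices to produce a $\pi_1(M)$-equivariant homeomorphism $\Phi\colon\M\to W_1^\ast$: since $M$ is a closed manifold, $\pi_1(M)$ acts properly discontinuously and cocompactly on $\M$ by deck transformations, and both properties transfer across $\Phi$. Topologically $\M$ and $W_1^\ast$ are both copies of $\bR^3$ fibering over $P\cong\bR^2$ with line fibres --- the $\hflow$-orbits (properly embedded lines) for the orbit projection $\pi\colon\M\to P$, and the half-leaves $\cF_1^>(x)$ for $(x,t)\mapsto x$ --- so a homeomorphism between them over $P$ exists; the content is entirely in arranging equivariance, and here one cannot merely average, the $\pi_1(M)$-action on $P$ being very far from proper.

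To build an equivariant $\Phi$ we use the flow together with a $\pi_1(M)$-invariant Riemannian metric on $\M$ lifted from $M$. Since $\pi$ is equivariant, write $\Phi(\tilde p)=(\pi(\tilde p),\psi(\tilde p))$; the task is to choose $\psi(\tilde p)\in\cF_1^>(\pi(\tilde p))$ continuously and equivariantly so that $\psi$ restricts to a flow-direction-preserving homeomorphism of each $\hflow$-orbit onto $\cF_1^>(\pi(\tilde p))$, i.e.\ so that the second coordinate records the flow-parameter of $\tilde p$. Given $\tilde p$, its weak stable leaf in $\M$ is a plane --- at a $2n$-prong orbit, a union of $n$ half-planes glued along the orbit --- foliated by $\hflow$-orbits, with one side of the orbit of $\tilde p$ distinguished by the leafwise orientation of $\cF_1$; using the asymptotic behaviour of $\hflow$ along this leaf together with the metric, one selects the $\hflow$-orbit on the positive side at the scale determined by $\tilde p$, and lets $\psi(\tilde p)$ be its image in $P$, so that as $\tilde p$ moves along its orbit $\psi(\tilde p)$ sweeps monotonically across all of $\cF_1^>(\pi(\tilde p))$; at a prong one runs this simultaneously on all $n$ pages, compatibly with the identification defining $W_1^\ast$ --- this is where cyclicity of the prong stabilizer is used. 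The resulting $\Phi$ is continuous, bijective (a point of $\M$ is recovered from its $\hflow$-orbit together with its position within the weak stable leaf, and every element of $W_1^\ast$ is attained), proper, and $\pi_1(M)$-equivariant, the metric being invariant and the construction natural; hence a homeomorphism. The main obstacle is exactly this construction of $\psi$: checking that it is continuous across distinct orbits, that on each orbit it is a homeomorphism onto the whole positive half-leaf, and --- in the singular case --- that it is consistent across the pages of a weak stable leaf so that $\Phi$ descends correctly to $W_1^\ast$; all of this is arranged through the lifted invariant metric and the contraction of $\hflow$ along stable leaves, invariance of the metric forcing the equivariance of $\Phi$. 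Once $\Phi$, and symmetrically the corresponding homeomorphism $\M\to W_2^\ast$, is in place, proper discontinuity and cocompactness of the $\pi_1(M)$-actions on $W_1^\ast$ and $W_2^\ast$ follow at once.
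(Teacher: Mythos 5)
Your overall strategy --- build a $\pi_1(M)$-equivariant homeomorphism $\Phi\colon\M\to W_1^\ast$ over $P$ and transfer proper discontinuity and cocompactness from the deck action --- is exactly the paper's proof of Theorem \ref{thm_transverse_orientable_model_flow}, which the paper notes already settles Theorem \ref{thm_converse} in the nonsingular (Anosov) case. However, the one step you yourself identify as ``the main obstacle,'' the construction of the selection $\psi$, is asserted rather than carried out, and it is precisely the step that carries all the content. The paper makes it precise by first invoking Proposition \ref{prop_strong_stable} to replace $\flow$ by an orbit-equivalent flow admitting a strong stable foliation $\fss$ and an \emph{adapted} metric (one in which strong stable arcs are strictly contracted for all $t>0$), and then defining $h(x,y)$ to be the unique point of the orbit $x$ at $\fss$-distance exactly $1$ from the orbit $y$. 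Without the adapted metric, the ``distance $1$'' point need not be unique, so the map is not well defined, let alone a bijection; your phrase ``at the scale determined by $\tilde p$'' is gesturing at this but does not supply it. More seriously, Proposition \ref{prop_strong_stable} is stated (and cited) only for topological \emph{Anosov} flows; the theorem you are proving concerns topological \emph{pseudo-Anosov} flows, which by the paper's standing convention need not carry strong stable/unstable foliations, and no analogue of Proposition \ref{prop_strong_stable} in the singular setting is available in the paper or its references. This is exactly why the paper's actual proof of Theorem \ref{thm_converse} abandons the metric construction in the general case and instead verifies the hypotheses of Theorem \ref{thm_closing_implies_prop_disc}: the closing property follows from the pseudo-Anosov closing lemma, and uniform hyperbolicity of fixed points follows because periodic orbits through a fixed compact transversal have periods tending to infinity. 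So your argument, as written, has a genuine gap in the singular case.

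Two smaller remarks. First, your preliminary reductions (orbit space is a bifoliated plane, transverse orientability of the weak foliations gives invariant leafwise orientations so that $W_i^\ast$ is defined, prong stabilizers are cyclic since prongs come from periodic orbits, and absence of infinite product regions via \cite{Fen98}) are correct and match the paper. Second, if the metric construction could be pushed through in the singular case, your route would actually yield cocompactness more directly than the paper's closing-property argument does, since the equivariant homeomorphism $\M\cong W_1^\ast$ hands you both conclusions at once; so the approach is worth salvaging, but it needs either a singular version of Proposition \ref{prop_strong_stable} or a different mechanism for making $\psi$ well defined, continuous, and orbit-wise monotone onto all of $\cF_1^>(\pi(\tilde p))$.
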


\subsection{Conditions for proper discontinuity} 
In practice, it is not always easy to check that a given action of $G$ on a bifoliated plane induces a properly discontinuous action on the space $W_1^\ast$ (or $W^2_\ast$), and it can be useful to have a condition that can be read off of the (local) dynamics on $P$.  
This is much like the notion of {\em convergence group} of Gehring and Martin \cite{GM87} which has a definition in terms of a properly discontinuous action on a space of triples, and an equivalent definition in terms of ``convergence sequences", both of which are useful for different purposes.  

In this spirit, our next results give examples of conditions to obtain proper discontinuity.   We introduce two properties abstracted from the dynamics associated with pseudo-Anosov flow.  The first is an orbit space version of the Anosov closing lemma. 

\begin{definition}
Let $(P, \cF_1, \cF_2)$ be a nonsingular bifoliated plane.  A group $G < \Aut(P)$ {\em has the closing property} if for all $x\in P$, and each neighborhood $U_x$ of $x$, there exists a smaller neighborhood $V_x\subset U_x$, such that, if $g(V_x) \cap V_x \neq \emptyset$, then $g$ has a fixed point in $U_x$.  
\end{definition} 
See Definition \ref{def_closing_prop_general_case} for the statement of the closing property in the presence of singular points.
The classical closing lemma for (pseudo)-Anosov flows implies that all orbit space actions satisfy this property (see \cite[Proposition 1.4.7]{BM_book}). 

The next condition is a version of ``uniform hyperbolicity'' at fixed points. 
\begin{definition}\label{def_uniform_hyperbolicity}
A group $G < \Aut(P)$  has \emph{hyperbolic fixed points} if for every $x\in P$ fixed by some nontrivial $g\in G$, either $g$ or $g^{-1}$ acts as a topological contraction on $\cF_1(x)$ and a topological expansion on $\cF_2(x)$.

We say $G$ has \emph{uniformly hyperbolic fixed points} if additionally, for any compact rectangle\footnote{By compact rectangle, we mean the image of an embedding of $[0,1]\times [0,1]$ in $P$ sending the horizontal, resp.~vertical, leaves of the unit square to the $\cF_1$, resp.~$\cF_2$, leaves.} $R \subset P$ and sequence $g_n$ with fixed points in $R$, if $\cF_i(g_n \mathring{R}) \supset \cF_i(g_{n+1}\overline{R})$ for all $n$ (with $i = 1$ or $2$), then $\bigcap_n \cF_i(g_n R)$ is a single leaf. 
\end{definition} 
For Anosov flows on compact $3$-manifolds, this uniform property corresponds to the fact that any sequence of periodic orbits which intersect a given compact transverse disk $\tau$ must contain longer and longer periodic orbits (possibly including higher and higher powers of the same periodic orbits), thus fixed points of (powers of) the first return map that will have stronger and stronger hyperbolicity. See the proof of Theorem \ref{thm_converse} in Section \ref{sec_closing} for a detailed discussion of this.

We show: 
\begin{theorem} \label{thm_closing_implies_prop_disc}
Let $(P, \cF_1, \cF_2)$ be a bifoliated plane and $G < \Aut^+_1(P)$.
If $G$ has the closing property and uniformly hyperbolic fixed points, then $G$ acts freely and properly discontinuously on $W^\ast_1$. 
\end{theorem}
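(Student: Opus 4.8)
The plan is to prove the two assertions — freeness and proper discontinuity — by contradiction, extracting in each case a "bad" sequence $(g_n, (x_n, t_n))$ in $G \times W_1^\ast$ and feeding it into the closing property to produce fixed points, then using uniform hyperbolicity to derive a contradiction with the dynamics on $P$. Freeness is the easier half: if a nontrivial $g \in G$ fixes some $(x,t) \in W_1^\ast$, then $g$ fixes $x$, fixes the leaf $\cF_1(x)$ and the point $t \in \cF_1^>(x)$, and also fixes $\cF_2(x)$; by the hyperbolic-fixed-points hypothesis, after replacing $g$ by $g^{-1}$ we may assume $g$ contracts $\cF_1(x)$ toward $x$. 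But then $g$ cannot fix the point $t \neq x$ on $\cF_1^>(x)$ unless $t$ is an endpoint of the leaf — and a topological contraction on a line (or ray, at a prong) has $x$ as its unique fixed point in the leaf. This contradicts $t$ being fixed, so the action is free. (In the singular case one works with the quotient $W_1^\ast$; since the identification of the rays of $\cF_1^>(p)$ is $G$-equivariant and $\Stab_G(p)$ is cyclic, a nontrivial element of the stabilizer still acts on the relevant leaf data as a contraction/expansion and the same argument applies.)

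For proper discontinuity, suppose it fails: there is a compact set $K \subset W_1^\ast$ and infinitely many distinct $g_n \in G$ with $g_n K \cap K \neq \emptyset$. Projecting $K$ to $P$ via $(x,t) \mapsto x$ (and via $(x,t)\mapsto t$), I get a compact set $K_0 \subset P$ with $g_n K_0 \cap K_0 \neq \emptyset$ for all $n$. Cover $K_0$ by finitely many of the neighborhoods $V_x$ supplied by the closing property; passing to a subsequence, there is a single $x_0$ with $g_n(V_{x_0}) \cap V_{x_0} \neq \emptyset$ for all $n$ (after possibly composing with a fixed element of $G$ to move points between the finitely many cover pieces — this bookkeeping is routine). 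Then the closing property gives each $g_n$ a fixed point $p_n$ in the fixed larger neighborhood $U_{x_0}$. Now build a compact rectangle $R \subset P$ containing $U_{x_0}$ with sides along $\cF_1$ and $\cF_2$. By the hyperbolic-fixed-points hypothesis, each $g_n$ (or its inverse — pass to a further subsequence so the choice is uniform, say $g_n$ itself) contracts $\cF_1(p_n)$ and expands $\cF_2(p_n)$; consequently $\cF_1(g_n R) \subsetneq \cF_1(R)$ and these leaves accumulate. Choosing the subsequence further so that $\cF_1(g_n \mathring R) \supset \cF_1(g_{n+1}\overline R)$ for all $n$ — which one arranges using the contraction, since the $\cF_1$-width of $g_n R$ around $p_n$ shrinks — the uniform hyperbolicity hypothesis forces $\bigcap_n \cF_1(g_n R)$ to be a single leaf $L$. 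But $g_n K \cap K \neq \emptyset$ in $W_1^\ast$ means that both an $\cF_1$-base point and an $\cF_1^>$-point of a $K$-element lie in $g_n K$, hence (projecting) in a leaf of the form $\cF_1(g_n \cdot)$ with base in $g_n K_0 \cap K_0$; since the $t$-coordinates live on $\cF_1^>$ of these base points, the positive rays based on the $\cF_1(g_n R)$ must stay within the compact set $K$, and as $\cF_1(g_n R) \to L$ these rays degenerate — the contraction drives the $t$-coordinate $g_n t_n$ off every compact subset of the leaf $L$, contradicting $g_n t_n$ lying in the compact $K$. This contradiction proves proper discontinuity.

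The main obstacle I anticipate is the last step: making precise that the "bad" sequence in $W_1^\ast$ — not just in $P$ — actually violates something. Proper discontinuity on $P$ can genuinely fail (e.g. at a fixed point of a hyperbolic element, powers $g^n$ fix the point), so the whole point of passing to $W_1^\ast$ is that the extra $t$-coordinate records a point on the positive ray that a contraction cannot keep in place. Turning this into a clean argument requires carefully tracking how the compactness of $K \subset W_1^\ast$ constrains \emph{both} coordinates simultaneously: the base points $g_n x_n$ stay in a compact subset of $P$, \emph{and} the ray-points $g_n t_n$ stay in a compact subset of the (varying) leaves $\cF_1(g_n x_n)$, and uniform hyperbolicity is exactly what rules out a sequence of leaves converging to $L$ while carrying a bounded family of interior ray-points. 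A secondary technical point is the subsequence bookkeeping needed to (i) reduce to a single neighborhood $V_{x_0}$ and (ii) make the contraction-vs-expansion dichotomy uniform across $n$ and (iii) arrange the nesting $\cF_1(g_n \mathring R) \supset \cF_1(g_{n+1}\overline R)$ required to invoke Definition \ref{def_uniform_hyperbolicity}; none of these is deep, but they must be done in the right order. The singular case adds only the remark that all of this is carried out on the $G$-equivariant quotient $W_1^\ast$, using that $\Stab_G(p)$ is cyclic so that the prong identifications do not interfere with the contraction/expansion structure on leaves through non-prong points, which form a dense set.
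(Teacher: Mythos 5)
Your freeness argument is fine and matches the paper's. The proper discontinuity half, however, has genuine gaps at exactly the points you flag as ``routine bookkeeping''; these are in fact the substance of the proof (which the paper carries out in Proposition \ref{prop_proper_discontinuity}).

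The first and most serious gap is the reduction to a single closing neighborhood. From $g_nK_0\cap K_0\neq\emptyset$ you get points $p_n\in V_{x_i}$ with $g_n(p_n)\in V_{x_j}$, where after a subsequence $i$ and $j$ are constant but \emph{need not be equal}: the source accumulation point $x$ and the target accumulation point $y$ of a recurrent sequence are in general different. Your fix --- ``composing with a fixed element of $G$ to move points between the cover pieces'' --- does not work, because $G$ is an arbitrary subgroup of $\Aut^+_1(P)$ and there is no reason any element of $G$ carries $V_{x_j}$ anywhere near $V_{x_i}$. The closing property only produces a fixed point when a set returns to \emph{itself}. The correct move is to pass to the compositions $g_ng_m^{-1}$ (which nearly recur at the target $y$) or $g_m^{-1}g_n$ (at the source $x$): since $g_n(p_n)$ and $g_m(p_m)$ both converge to $y$, the element $g_ng_m^{-1}$ maps a point of a neighborhood of $y$ back into that neighborhood, and the closing property applies to \emph{it}. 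This also repairs your second gap: to invoke uniform hyperbolicity (Definition \ref{def_uniform_hyperbolicity}) you need the nesting $\cF_i(g_n \mathring R)\supset\cF_i(g_{n+1}\overline R)$, which comes from the Markovian intersection pattern of $R$ with $g_m^{-1}g_nR$ --- and that requires a hyperbolic fixed point of the \emph{composition} $g_m^{-1}g_n$ inside $R$, not of the individual $g_n$. Knowing each $g_n$ has its own hyperbolic fixed point $p_n\in R$ gives $\cF_1(g_nR)\supset\cF_1(R)$ for each $n$ separately but no containment between $g_nR$ and $g_mR$, so ``the $\cF_1$-width of $g_nR$ shrinks'' does not follow.

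The final contradiction is also not established. If $\bigcap_n\cF_1(g_nR)$ is a single leaf $L$, this constrains the $g_nR$ in the direction transverse to $\cF_1$; it says nothing about where along the leaf the second coordinate $g_nt_n$ sits, so ``the contraction drives $g_nt_n$ off every compact subset of $L$'' is an assertion, not an argument. The paper's proof needs both coordinates throughout: it uses Lemma \ref{lem_wandering_1} to show the rectangles $g_nR_x$ around the first coordinate are pairwise disjoint, distinguishes whether their $\cF_1$- or $\cF_2$-saturations shrink, reduces to the $\cF_1$ case by swapping the roles of $(x,x')$ and $(y,y')$, shows the leaf segment $J$ between the two coordinates collapses to a point $z$ (so that $g_mg_n^{-1}$ has fixed points near $z$), and then applies the shrinking Lemma \ref{lem_fixed_points_shrink} to the inverses $g_n^{-1}$ to contradict the case assumption. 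None of this structure is present in your sketch, so as written the proof does not go through.
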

The result above holds also in the singular case, with the appropriate definition of closing property (Definition \ref{def_closing_prop_general_case}), since in this context the stabilizer of any point is trivial or cyclic (Lemma \ref{lem_cyclic_stabilizer}), and thus the appropriate space $W_1^\ast$ can be constructed.

Combining this with the previous theorem gives: 
\begin{corollary} 
Let $(P, \cF_1, \cF_2)$ be a bifoliated plane with no $\cF_1$-infinite product region.  Then any $G < \Aut^+_1(P)$  with the closing property and uniformly hyperbolic fixed points  is a
 3-manifold group, and $W^\ast_1/G$ admits an expansive flow whose orbit space is $(P, \cF_1, \cF_2)$. If $W^\ast_1/G$ is compact then the flow is pseudo-Anosov. 
 \end{corollary}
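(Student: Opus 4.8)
The plan is to deduce this corollary by combining Theorem~\ref{thm_closing_implies_prop_disc} with the reconstruction results Theorems~\ref{thm_compact}, \ref{thm_compact_singular} and \ref{thm_noncompact}; the argument is essentially a matter of assembling these pieces and verifying that their hypotheses hold.

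First, I would invoke Theorem~\ref{thm_closing_implies_prop_disc}: since $G$ has the closing property and uniformly hyperbolic fixed points, it acts freely and properly discontinuously on $W^\ast_1$. This already disposes of two potential technical points. On the one hand, as noted after Theorem~\ref{thm_closing_implies_prop_disc}, the closing property forces the stabilizer in $G$ of any point of $P$ --- in particular any prong singularity --- to be trivial or cyclic (Lemma~\ref{lem_cyclic_stabilizer}), so the quotient $W^\ast_1$ is well defined in the singular case. On the other hand, since the $G$-action on $W^\ast_1$ is free, we may apply Theorems~\ref{thm_compact}--\ref{thm_noncompact} with freeness of the action in place of the torsion-free hypothesis, as permitted by the remark following Theorem~\ref{thm_noncompact}.

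Next, I would split into cases according to whether the action of $G$ on $W^\ast_1$ is cocompact. If it is, then Theorem~\ref{thm_compact} (when $P$ is nonsingular) or Theorem~\ref{thm_compact_singular} (when $P$ is singular) applies, and yields that $M := W^\ast_1/G$ is a compact $3$-manifold equipped with a pseudo-Anosov flow $\flow$ --- a topological Anosov flow, hence in particular pseudo-Anosov, in the nonsingular case --- whose orbit space is $(P, \cF_1, \cF_2)$ and whose induced $\pi_1(M)$-action coincides with the given $G$-action. Since a pseudo-Anosov flow on a compact $3$-manifold is expansive, this proves the final sentence of the corollary. If instead the action is not cocompact, then Theorem~\ref{thm_noncompact} applies directly and gives that $M := W^\ast_1/G$ carries a foliation-preserving expansive flow with orbit space $(P, \cF_1, \cF_2)$.

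Finally, in either case $W^\ast_1 \cong \bR^3$ is contractible and $G$ acts freely and properly discontinuously on it, so $M = W^\ast_1/G$ is an aspherical $3$-manifold with $\pi_1(M) \cong G$; thus $G$ is a $3$-manifold group. The only real work here is bookkeeping --- checking that the hypotheses of the cited theorems (freeness of the action, cyclic prong stabilizers, and the assumed absence of $\cF_1$-infinite product regions) are in force --- rather than any substantive new argument; all of the content has already been established in Theorems~\ref{thm_closing_implies_prop_disc}, \ref{thm_compact}, \ref{thm_compact_singular} and \ref{thm_noncompact}.
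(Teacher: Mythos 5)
Your proposal is correct and is exactly the argument the paper intends: the corollary is stated immediately after Theorem \ref{thm_closing_implies_prop_disc} as a combination of that theorem (free, properly discontinuous action on $W^\ast_1$, with cyclic prong stabilizers via Lemma \ref{lem_cyclic_stabilizer} making $W^\ast_1$ well defined) with the reconstruction Theorems \ref{thm_compact}, \ref{thm_compact_singular} and \ref{thm_noncompact}. Your bookkeeping — substituting freeness for torsion-freeness as allowed by the remark after Theorem \ref{thm_noncompact}, splitting on cocompactness, and deducing $\pi_1(M)\cong G$ from the free properly discontinuous action on $W^\ast_1\cong\bR^3$ — matches the paper's route.
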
 

Transitive pseudo-Anosov flows on a compact 3-manifold $M$ are characterized by the property that the set of points in the orbit space fixed by nontrivial elements of $\pi_1(M)$ is dense.  Thus, groups of automorphisms of bifoliated planes with a dense set of fixed points have become an important class to study.   In this case, we give below an even simpler condition to ensure uniformly hyperbolic fixed points: it suffices to assume that the fixed points are hyperbolic and that their orbit under $G$ is closed and discrete. In terms of flows, this condition is just saying that fixed points in the orbit space corresponds to hyperbolic periodic (hence compact) orbits.
\begin{theorem}\label{thm_closing_plus_transitive_implies_prop_disc}
Let $G < \Aut^+_1(P)$ be a group such that $\{ x \in P : \exists g \neq \mathrm{id} \text{ with } g(x) = x\}$ is dense in $P$.  Assume
\begin{enumerate}[label=(\roman*)]
\item $G$ has the closing property,
\item\label{item_thm_hyperbolic} $G$ has hyperbolic fixed points, and
\item For any $x\in P$ fixed by some nontrivial $g\in G$, $G\cdot x$ is closed and discrete in $P$.
\end{enumerate} 
Then $G$ acts properly discontinuously and freely  on $W^\ast_1$.
\end{theorem}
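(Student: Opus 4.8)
The plan is to deduce the statement from Theorem~\ref{thm_closing_implies_prop_disc}. Hypotheses~(i) and~(ii) are precisely its standing hypotheses ``closing property'' and ``hyperbolic fixed points'', and under~(i) the stabilizer in $G$ of every point of $P$ is cyclic (Lemma~\ref{lem_cyclic_stabilizer}), so the space $W_1^\ast$ is defined (as in the discussion following Theorem~\ref{thm_closing_implies_prop_disc}). Thus it suffices to prove that $G$ has \emph{uniformly} hyperbolic fixed points, i.e.\ to verify the nesting condition of Definition~\ref{def_uniform_hyperbolicity}: given a compact rectangle $R\subset P$ and a sequence $g_n\in G$ with fixed points $p_n\in R$ satisfying $\cF_1(g_n\mathring R)\supset\cF_1(g_{n+1}\overline R)$ for all $n$, we must show $B:=\bigcap_n\cF_1(g_n R)$ is a single leaf (the argument for $i=2$ is the same). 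After passing to a subsequence assume $p_n\to p$; since $p_m\in g_mR\subset\cF_1(g_nR)$ for all $m\ge n$ and the bands are closed and $\cF_1$-saturated, $p\in B$, and $B$ is an $\cF_1$-saturated band. We argue by contradiction, assuming $B$ is nondegenerate, so that $\mathring B\neq\emptyset$; note that then every $q\in\mathring B$ satisfies $q\in\mathring{\cF_1(g_n R)}=g_n\cF_1(\mathring R)$ for all $n$, equivalently $\cF_1(q)\cap g_n\mathring R\neq\emptyset$ for all $n$.

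Consider first the subcase where some subsequence of $(p_n)$ lies in a single $G$-orbit. By~(iii) that orbit is closed and discrete, so this subsequence is eventually constant, equal to some $p\in R$ fixed by every $g_n$; the stabilizer $\Stab_G(p)=\langle\sigma\rangle$ is then infinite cyclic (it is cyclic by Lemma~\ref{lem_cyclic_stabilizer} and has no torsion by~(ii)). Choosing $\sigma$, after possibly replacing it by $\sigma^{-1}$, to contract $\cF_1(p)$ --- hence by~(ii) to expand $\cF_2(p)$ --- and writing $g_n=\sigma^{k_n}$, the element $\sigma$ acts as an expansion on the leaf space of $\cF_1$ near $\cF_1(p)$, and the intervals of leaves cut out by the bands $\cF_1(\sigma^{k_n}R)$ are strictly nested and contain $[\cF_1(p)]$. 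One checks this forces $(k_n)$ to be strictly monotone with $\sigma^{k_n}$ contracting in the relevant direction, so these intervals collapse to $\{[\cF_1(p)]\}$, i.e.\ $B=\cF_1(p)$, contradicting nondegeneracy.

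The remaining, and principal, subcase is when the $p_n$ lie on pairwise distinct $G$-orbits. Here one uses density of the fixed-point set to pick $q\in\mathring B$ fixed by a nontrivial $h\in G$, and then exploits the shrinking of the bands $\cF_1(g_nR)$ toward $B$ to ``transport'' $q$ towards the boundary of $B$: starting from $\cF_1(q)\cap g_n\mathring R\neq\emptyset$, and using the hyperbolicity of $g_n$ at $p_n$ (with $p_n\to p$) together with the closing property~(i) near $p$, the aim is to produce for infinitely many $n$ points $q_n'\in\mathring B$ that are fixed by elements of $G$, that lie in a single $G$-orbit (being the image of $q$, or of one another, under elements of $G$), and that accumulate on a point of $\overline B$. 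Such a configuration contradicts hypothesis~(iii), since the $G$-orbit of a fixed point cannot accumulate.

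The step I expect to be the main obstacle is precisely this transport-and-accumulation argument in the last subcase --- in particular, keeping the transported fixed points inside a fixed compact set and on one $G$-orbit while showing they genuinely accumulate. Intrinsically this is the orbit-space incarnation of the fact that a sequence of periodic orbits of a (pseudo-)Anosov flow meeting a fixed compact transverse disk must have lengths --- equivalently, amounts of hyperbolicity --- tending to infinity, and it is here that hypotheses~(i) and~(iii) must be used jointly. Once $G$ is shown to have uniformly hyperbolic fixed points, Theorem~\ref{thm_closing_implies_prop_disc} gives that $G$ acts freely and properly discontinuously on $W_1^\ast$, which is the desired conclusion.
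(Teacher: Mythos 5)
Your reduction to uniformly hyperbolic fixed points, the set-up with the nested bands, and your first subcase (all $p_n$ eventually equal to a single $p$ with cyclic stabilizer, forcing $B=\cF_1(p)$) all match the paper's proof, which runs through Theorem \ref{thm_transitivity_and_discrete_implies_uniform}. However, your ``principal'' subcase is a genuine gap, not just a technical obstacle: the transport-and-accumulation argument is never carried out, and as you set it up it is not clear it can be, because a point $q\in\mathring B$ of the band $B=\bigcap_n\cF_1(g_nR)$ only has its \emph{leaf} meeting $g_n\mathring R$; $q$ itself need not lie in any $g_nR$, so there is no a priori way to keep the putative transported fixed points $q_n'$ in a fixed compact set or on one orbit.

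The paper's resolution avoids any case division on the orbits of the $p_n$. By hyperbolicity (Markovian intersections), when $\cF_1(g_nR)$ is strictly decreasing the saturations $\cF_2(g_nR)$ are strictly increasing, so the rectangles $g_nR$ limit to a nondegenerate \emph{two-dimensional} trivially foliated region $U$ with nonempty interior, and every $x\in\mathring U$ lies in $g_nR$ for all large $n$. Density of fixed points produces such an $x$ fixed by some nontrivial $h$; then $g_n^{-1}x\in R$ for all large $n$, and hypothesis (iii) applied to the orbit $G\cdot x$ (closed and discrete, hence meeting the compact $R$ in finitely many points) forces $g_n^{-1}x$ to be eventually constant along a subsequence. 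Consequently $g_ng_m^{-1}\in\Stab_G(x)$, which is infinite cyclic by Lemma \ref{lem_cyclic_stabilizer}, and writing $g_ng_1^{-1}=h^{k_n}$ with $h$ contracting on $\cF_2(x)$ one concludes exactly as in your first subcase that $\bigcap_n\cF_1(g_nR)=\cF_1(x)$, a contradiction. In short: the missing idea is to apply the closed-and-discrete hypothesis to the orbit of a fixed point chosen inside the two-dimensional limit region (so that its $g_n^{-1}$-translates stay in $R$), rather than to the fixed points of the $g_n$ themselves or to points of the one-dimensional band $B$. This collapses your hard subcase into your easy one, and no transport argument is needed.
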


Notice that in \ref{item_thm_hyperbolic} of the above theorem we do not assume 
uniformity of hyperbolic fixed points. Once again, this result also holds without additional assumptions in the singular case.

\subsection*{Extended convergence groups}
Theorems \ref{thm_compact} and \ref{thm_converse} can be seen as a generalization to any transversally orientable pseudo-Anosov flow of Thurston's characterization of transversally orientable skew Anosov flows in terms of extended convergence groups \cite{Thu97}.   
A {\em skew} Anosov flow is one for which (say) the stable foliation
lifts to the universal cover to a foliation which has leaf space homeomorphic
to the reals, and the flow is not orbitally equivalent to a suspension
Anosov flow.

The \emph{skew plane} is the open region between $y=x$ and $y=x+1$ in $\bR^2$, with $\cF_1$ and $\cF_2$ being the horizontal and vertical foliations
respectively.  
For Thurston \cite{Thu97}, an {\em extended convergence group} is a subgroup of $\Homeo^+(\R)$, commuting with translation by $1$,  that acts properly discontinuously on the space $T:= \{(u,v,w) : u < v < w < u+1\}$.\footnote{There is a typo in the definition of extended convergence group in \cite{Thu97}, the action must be properly discontinuous on our space $T$, which corresponds to the space $\widetilde T$ in the notations of \cite{Thu97}, and not on $\widetilde{\bar T}$ as written in \cite[Definition 7.2]{Thu97}.} Considering $\R$ as the ``lower boundary" $y=x$ of the standard model for the diagonal skew strip, $\R$ is simultaneously identified with the leaf space of $\cF_1$ and of $\cF_2$.   There is an obvious homeomorphism from $T$ to $W^>_1$ (up to choice of orientation) as follows.  For $(u,v,w) \in T$, consider the leaf $l^1_w$ of $\cF_1$ corresponding to $w$; the leaves of $\cF_2$ corresponding to $u$ and $v$ intersect $l^1_u$ at unique points (say, $x$ and $y$ respectively), and we send $(u, v,w)$ to $(x, y) \in W^>_1$.   

Thurston's observation was that cocompact extended convergence groups are precisely those which come from orbit space actions of transversally orientable skew Anosov flows on compact 3-manifolds. However, his proof in \cite{Thu97} is very different from the one we give here.  

\subsection*{Applications} 
The definition of $W^>_1$ and its generalization to the prong case was inspired by the discovery of a simple ``constructive" proof of a theorem of Barbot that actions on orbit spaces determine an Anosov flow up to orbit equivalence, stated below (Theorem \ref{thm_action_determines_OEflow}).  We begin this paper by presenting this proof, to serve as motivation and to give a simple illustration of our arguments. 

In Section \ref{sec_applications}, we treat another application, describing the structure of automorphism groups of \emph{loom spaces} (see Definition \ref{def_loom_space}) which were introduced in \cite{SS24} as a class of bifoliated planes including those induced by a veering triangulation of a $3$-manifold. We show how one can verify the hypothesis of our theorems in this case, with surprisingly little assumption on the group action:

\begin{restatable}[Loom spaces give expansive flows]{theorem}{loomthm}\label{thm_loom_space}
Let $G< \Aut_1^+(P)$ with $(P,\cF_1,\cF_2)$ a loom space.   Then $G$ acts properly discontinuously and freely on $W_1^>$, so $G \cong \pi_1(M)$ for some 3-manifold $M$ and $W^>_1/G \cong M$ admits an expansive flow whose orbit space is $(P, \cF_1, \cF_2)$. Moreover, $M$ is ``atoroidal'', in the sense that any $\bZ^2$ subgroup of $\pi_1(M)$ fixes a unique cusp. When $G$ is finitely generated we prove the stronger fact that any $\pi_1$-injective torus or Klein bottle in $M$ is boundary parallel.
\end{restatable}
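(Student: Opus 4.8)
The plan is to verify the hypotheses of one of our proper-discontinuity criteria directly from the loom-space axioms, feed the conclusion into Theorems \ref{thm_compact}/\ref{thm_noncompact} to produce the flow, and then analyze abelian subgroups and essential surfaces separately. First I would record the basic topology of loom spaces: by the axioms of \cite{SS24} (Definition \ref{def_loom_space}), $(P,\cF_1,\cF_2)$ is a \emph{nonsingular} bifoliated plane, so $W_1^> \cong \bR^3$, and I would check -- either straight from the definition or as a short lemma -- that a loom space contains no $\cF_1$-infinite product region; this is exactly where the ``sawblade''/ladder structure of \cite{SS24} enters, and it is precisely the hypothesis needed to invoke our main theorems. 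I would also observe that a loom space is never the orbit space of a pseudo-Anosov flow on a \emph{closed} manifold, so that once proper discontinuity is established we land in the noncompact setting of Theorem \ref{thm_noncompact}, producing an expansive (not closed pseudo-Anosov) flow.

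The core of the argument -- and the step I expect to be the main obstacle -- is to show that \emph{every} torsion-free $G < \Aut^+_1(P)$ acts properly discontinuously and freely on $W_1^>$, with no further assumption on $G$. For this I would establish two structural facts about the loom space that are independent of $G$: (i) any foliation-preserving homeomorphism of $P$ fixing a point $x$ acts as a topological contraction on one of $\cF_1(x), \cF_2(x)$ and as an expansion on the other, so $G$ has hyperbolic fixed points -- this already yields freeness on $W_1^>$, since a topological contraction of a leaf has a unique fixed point, so no nontrivial element can fix a pair $(x,t)$ with $t \in \cF_1^>(x)$; and (ii) $\Aut(P)$ (hence $G$) has the closing property for a loom space. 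Both should follow from the local product structure of loom spaces together with the discreteness/combinatorial rigidity built into the axioms, the point being that the absence of infinite product regions forbids ``indifferent'' or rotation-type fixed points. Then I would either verify uniform hyperbolicity of fixed points directly from the combinatorics of the loom and apply Theorem \ref{thm_closing_implies_prop_disc}, or -- if one can show a loom space has a dense set of fixed points with closed, discrete orbits -- apply Theorem \ref{thm_closing_plus_transitive_implies_prop_disc}. The genuine difficulty is translating the purely combinatorial loom axioms into these dynamical conditions; this is what ``surprisingly little assumption on the group action'' refers to.

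With proper discontinuity and freeness in hand, Theorem \ref{thm_compact} or Theorem \ref{thm_noncompact} immediately gives that $M = W_1^>/G$ is an aspherical $3$-manifold with $\pi_1(M) \cong G$ carrying an expansive flow whose orbit space is $(P,\cF_1,\cF_2)$; that $M$ is a manifold rather than an orbifold is precisely freeness, cf.\ Lemma \ref{lem_torsion_free}. In particular $G$ is a $3$-manifold group, recovering \cite[Theorem 17.15]{BJK25} with the added flow structure.

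For the atoroidal statement, let $A \cong \bZ^2 < G$. Acting freely and properly discontinuously on $\bR^3 = W_1^>$, its quotient is $T^2 \times \bR$, so $A$ is carried by an incompressible $T^2\times\bR$ cover of $M$. I would use the structure theory of $\bZ^2$-actions on bifoliated planes (as developed in \cite{BM_book} and by Fenley--Barbot) to show that $A$ leaves invariant a unique ``ladder'' region of $P$ escaping to a well-defined end of $M$, and that these ends are exactly the cusps of $M$ (matching the cusp picture for flows arising from veering triangulations); uniqueness follows because two distinct invariant ladder regions would together span an $\cF_i$-infinite product region, contradicting \cite{Fen98} and the loom axioms. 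Finally, when $G$ is finitely generated I would argue that $M$ is topologically tame with finitely many cusps -- using finite generation together with the product structure of the flow near the ends -- so $M$ has a compact core $M_0$ with torus boundary components. Given a $\pi_1$-injective torus or Klein bottle $T \subset M$, its fundamental group contains a $\bZ^2$, which by the atoroidality just proved is conjugate into a cusp subgroup; a standard $3$-manifold argument (an essential torus in an atoroidal manifold with torus boundary, whose $\pi_1$ is peripheral, is boundary parallel) then shows $T$ is boundary parallel in $M_0$, which is the claim.
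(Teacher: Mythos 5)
Your high-level architecture matches the paper's: establish the closing property and (uniformly) hyperbolic fixed points from the loom axioms, invoke Theorem \ref{thm_closing_implies_prop_disc} and Theorem \ref{thm_noncompact}, then treat $\bZ^2$ subgroups and essential surfaces separately. But the entire mathematical content of the proof lives in the steps you defer with ``should follow from the local product structure together with the discreteness/combinatorial rigidity built into the axioms,'' and nothing you write indicates how those steps actually go. Concretely: (i) hyperbolicity of fixed points is \emph{not} a consequence of the absence of infinite product regions forbidding ``rotation-type'' fixed points; the paper first proves that a nontrivial element fixes at most one point on any leaf (Proposition \ref{prop_one_fixed_point}), which requires a delicate argument producing a fixed point \emph{between} two putative fixed points via nonseparated leaves and perfect fits, then iterating to force the identity, and separately ruling out reflections because tetrahedron rectangles have no axial symmetry. (ii) The closing property (Proposition \ref{prop_loom_implies_closing}) is proved by constructing, around each point $x$, \emph{two} tetrahedron rectangles $H_x, V_x$ each with three cusps on one side of the relevant leaf of $x$ (Lemma \ref{lem_building_tetrahedrons}), classifying how a tetrahedron rectangle can meet its image (Markovian / weakly Markovian / diagonal, Lemma \ref{lem_types_of_intersections}), and using uniqueness of fixed cusps (Lemma \ref{lem_fix_unique_cusp}) to exclude all but the doubly-Markovian case. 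None of this is visible in your sketch, and your fallback route via Theorem \ref{thm_closing_plus_transitive_implies_prop_disc} is unavailable: a loom space carries no a priori dense set of $G$-fixed points. (iii) Uniform hyperbolicity is obtained by noting that a nondegenerate limit rectangle sits inside a tetrahedron rectangle and hence has two genuine corners, which the closing property converts into two fixed points of some $g_mg_n^{-1}$, contradicting uniqueness of fixed points; again this is a specific mechanism, not a formal consequence of your stated facts.

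Your treatment of the toroidal part also has gaps. The hard direction of Proposition \ref{prop_atoroidal} is the \emph{existence} of an invariant cusp for $\bZ^2$: the paper must first show $H$ acts freely on $P$ (via cyclic point stabilizers), then analyze translation axes in both leaf spaces, show the region $\Omega$ they span is bounded by monotone graphs with only countably many jumps, and use the closing property to contradict freeness. Your uniqueness argument (``two invariant ladder regions span an infinite product region'') is not how the paper argues and is not obviously correct; uniqueness actually comes from the fact that no leaf has perfect fits at both ends (Observation \ref{obs_no_double_perfect_fits}). Finally, for the boundary-parallel statement your appeal to a ``compact core plus standard argument'' skips the two points the paper has to work for: ruling out that the immersed torus is homotopic into a Seifert piece of the JSJ decomposition (done via the normalizer statement in Proposition \ref{prop_cusp_stabilizers}), and showing the resulting embedded torus is peripheral, which the paper does dynamically using the flow structure near the cusp (Remark \ref{rem_structure_cusp}), not by abstract $3$-manifold topology alone. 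As written, your proposal is a correct table of contents for the proof, but the theorem's substance --- converting the loom axioms into the dynamical hypotheses of Theorem \ref{thm_closing_implies_prop_disc} --- is missing.
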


\begin{remark}
As outlined earlier, the fact that $G$ is a 3-manifold group was shown previously in \cite{BJK25}, the new content of this result is the existence of the expansive flow.
\end{remark}

\begin{remark}
There is another case of bifoliated planes studied in the literature which could readily fit our framework: Iakovoglou in \cite{Iak22} introduced the notion of bifoliated planes admitting a {\em markovian family}. While the focus in \cite{Iak22} is to start from the orbit space of an Anosov flow, one can instead start with an abstract markovian family on a bifoliated plane, and show that it will have to come from an Anosov (or expansive in the non-compact case) flow by adapting the strategy we use for loom spaces. Since this abstract version has not yet been written, nor has yet attracted as much attention as the use of veering triangulations, we did not develop that here. 
\end{remark}

\subsection*{Outline} 
Section \ref{sec_Barbot} sets the stage for the paper, giving a constructive proof of Barbot's theorem. 
The proof of Theorems \ref{thm_compact} and \ref{thm_noncompact} for the nonsingular case are done in Section \ref{sec_nonsingular_main_theorem}. The nonsingular version of Theorem \ref{thm_closing_implies_prop_disc} is treated in Section \ref{sec_proper_discontinuity}. In Section \ref{sec_pA_case} we describe the modifications needed to treat the singular case and deduce Theorem \ref{thm_converse}.  Finally, Section \ref{sec_applications} contains the application to loom spaces.

\begin{remark}[Concurrent work by Baik, Wu and Zhao]
A draft version of this note was circulated in 2024, and the definition of $W^>_1$ applied to the proof of Barbot's theorem as well as a sketch of its generalization were presented at the conference {\em Beyond Uniform Hyperbolicity} in June 2023.  Around the same time, Baik, Wu and Zhao independently were working on very closely related results, now available in the paper \cite{BWZ24}. More precisely, rephrased into the terminology that we use here, Theorem 1.1 of \cite{BWZ24} gives similarly to our Theorem \ref{thm_noncompact} that if a group $G<\Aut_1^+(P)$ acts freely and properly discontinuously on $W_1^>$, then one gets a flow preserving two transverse foliations in $W_1^>/G$. While we assume no infinite product regions and deduce expansivity of the flow, they instead assume that elements of $g$ fixing a leaf must act with a unique hyperbolic fixed point on it, and deduce that the behavior on cylindrical leaves of the flow in $W_1^>/G$ is like that of a topological Anosov flow. This weaker notion of (pseudo)-Anosov flow is what they introduce as a \emph{reduced pseudo-Anosov flow} (Definition 2.17 of \cite{BWZ24}).
We encourage the reader to consult their work for this alternative perspective.
\end{remark}

\subsection*{Acknowledgements} 
TB thanks Théo Marty for a discussion in 2023 in which he suggested the space $W^>_1$ has a good potential model for an Anosov flow.
TB was partially supported by the NSERC (ALLRP 598447 - 24 and RGPIN-2024-04412). SF was partially supported by NSF DMS-2054909. KM was partially supported by NSF CAREER grant DMS-1933598 and a Simons foundation fellowship.
The authors thank H.~Baik for pointing out the paper \cite{BJK25}. We also thank him as well as S.~Schleimer and H.~Segerman for remarking that an assumption in Theorem \ref{thm_loom_space} in an earlier version of this paper was in fact unnecessary. Finally, we thank S.~Taylor for his comments and for pointing us to the reference \cite{Tsa23}.

\section{Motivation: a (re)-constructive proof of Barbot's theorem}  \label{sec_Barbot}
In this section $\flow$ denotes a topological Anosov flow on a compact 3-manifold $M$.  Its {\em orbit space}, denoted $\orb$ is the quotient space of $\wt M$ by the equivalence relation collapsing each orbit of the lifted flow $\hflow$ to a point.  By \cite{Bar95a,FEn94} this is a topological plane (the generalization to the pseudo-Anosov case is due to \cite{FM01}), and the action of $\pi_1(M)$ on $\wt M$ descends to an action on this plane by homeomorphisms.  
The 2-dimensional weak-stable and weak-unstable foliations $\fs$ and $\fu$ for $\flow$ lift to foliations $\hfs$ and $\hfu$ on $\wt M$, which descend to 1-dimensional foliations $\bfs$ and $\bfu$ on $\orb$ (also called stable/unstable
respectively) preserved by the action of $\pi_1(M)$.

Barbot showed that the action of $\pi_1(M)$ on $\orb$ determines the flow up to orbit equivalence, as follows: 

\begin{theorem}[Barbot \cite{Bar95a}, Théorème 3.4]\label{thm_action_determines_OEflow} 
Let $\flow$ and $\psi$ be  Anosov flows on $M$.  Suppose there exists an isomorphism $f_\ast \colon \pi_1(M) \rightarrow \pi_1(M)$ and a homeomorphism  $\bar f \colon \orb \rightarrow \cO_{\psi} $ sending the stable foliation of $\flow$ to that of $\psi$, which is $f_\ast$-equivariant, i.e., 
 \[ \bar f(g \cdot x) = f_\ast(g) \cdot \bar f(x)\] 
 for all $x \in \mathcal{O}_\flow$ and $g\in \pi_1(M)$.  Then $\flow$ and $\psi$ are orbit equivalent by a homeomorphism $f$ preserving direction of the flow and inducing $f_\ast$ on $\pi_1(M)$ and $\bar f$ on $\orb$.  
\end{theorem}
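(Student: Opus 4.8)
The plan is to recover $\wt M$ --- together with the $\pi_1(M)$-action and the direction of the flow --- directly from the orbit space $\orb$ and its stable foliation $\bfs$, which is exactly the data that $\bar f$ and $f_\ast$ are assumed to preserve. Writing $\cF_1:=\bfs$, fixing a leafwise orientation of $\cF_1$, and letting $\pi_\flow\colon\wt M\to\orb$ denote the orbit projection, the core step is to construct a $\pi_1(M)$-equivariant homeomorphism
\[
\Theta_\flow\colon \wt M\longrightarrow W_1^>
\]
that carries each lifted orbit $\pi_\flow^{-1}(x)$ onto the fiber $\{x\}\times\cF_1^>(x)$ of $W_1^>\to\orb$ and restricts on each such fiber to an orientation-preserving homeomorphism sending the flow direction to the leafwise orientation of $\cF_1$. (When $\flow$ is not transversally orientable this leafwise orientation is invariant only under the orientation-preserving subgroup; the $\pi_1(M)$-action transported to $W_1^>$ via $\Theta_\flow$ is then the restriction of the diagonal $\orb\times\orb$-action on that subgroup and is twisted on the remaining elements by a group-invariant identification of the two rays of $\cF_1(x)\setminus\{x\}$ --- exactly the device used for prongs later in the paper --- and nothing below is affected.) Granting such a $\Theta_\flow$, the rest of the proof is a diagram chase.

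To build $\Theta_\flow$: a point $p\in\wt M$ determines $x=\pi_\flow(p)$, and the leaf of $\hfs$ through $p$ is a plane foliated by the lifted orbits it contains, with leaf space $\cF_1(x)$; using this product structure together with the transverse foliation $\bfu$ and the flow, one attaches to $p$ a point $t_p\in\cF_1^>(x)$ recording the position of $p$ along its orbit, and sets $\Theta_\flow(p):=(x,t_p)$. In the model case of a skew Anosov flow this is precisely Thurston's identification of $\wt M$ with the space of triples $T\cong W_1^>$ recalled in the introduction. The real work --- pinning down the correct recipe for $t_p$ in general, and then checking that $\Theta_\flow$ is continuous, injective and proper (hence a homeomorphism, by invariance of domain and connectedness of $W_1^>$), that it is $\pi_1(M)$-equivariant (automatic, since it is built only from $\pi_\flow$, the flow and the two foliations), and that each fiberwise restriction is onto all of $\cF_1^>(x)$ --- is where I expect the main obstacle to lie; once $\Theta_\flow$ is in place everything afterwards is formal.

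To finish, apply the same construction to $\psi$, yielding $\Theta_\psi\colon\wt M\to W_1^>$ built from $\cO_\psi$. Fix leafwise orientations of $\bfs$ on $\orb$ and of the stable foliation of $\psi$ on $\cO_\psi$. Since $\bar f$ sends stable leaves to stable leaves it preserves, or reverses, leafwise orientation on each leaf, hence globally since the leaf space is connected; after possibly reversing the chosen orientation on the $\psi$ side we may assume $\bar f$ sends $\cF_1^>(x)$ to the positive ray at $\bar f(x)$ for all $x$, so that $(x,t)\mapsto(\bar f(x),\bar f(t))$ is a homeomorphism $\widehat{\bar f}$ from the $W_1^>$ associated to $\flow$ to that associated to $\psi$, and $f_\ast$-equivariant by the equivariance of $\bar f$. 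Then
\[
\tilde f:=\Theta_\psi^{-1}\circ\widehat{\bar f}\circ\Theta_\flow\colon \wt M\longrightarrow\wt M
\]
is an $f_\ast$-equivariant self-homeomorphism of $\wt M$ which, by the fiberwise statements, sends $\hflow$-orbits to orbits of the lift of $\psi$ preserving their direction; being $f_\ast$-equivariant for the deck action it descends to a homeomorphism $f\colon M\to M$. This $f$ is an orbit equivalence from $\flow$ to $\psi$ preserving flow direction, it induces $f_\ast$ on $\pi_1(M)$, and, since $\pi_\psi\circ\tilde f=\bar f\circ\pi_\flow$, it induces $\bar f$ on orbit spaces --- which is the claim. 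Note that the argument used nothing about the unstable foliations and no orientation hypothesis on $\bar f$ or $f_\ast$, as anticipated in the statement.
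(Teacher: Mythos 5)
Your overall strategy is the same as the paper's: reconstruct $\wt M$ canonically as the space $W^>_1$ built from $(\orb,\bfs)$, observe that $\bar f$ induces an $f_\ast$-equivariant homeomorphism between the two models, and descend. The problem is that the one step you explicitly leave open --- ``pinning down the correct recipe for $t_p$'' --- is the entire mathematical content of the proof; everything else really is, as you say, formal. The paper's recipe is concrete and goes through a nontrivial input you do not mention: one first replaces $\flow$ by an orbit-equivalent flow admitting a strong stable foliation together with an adapted metric satisfying $\lVert D\flow^t(v)\rVert \leq e^{-t}\lVert v\rVert$ on $E^{ss}$ (this is Proposition \ref{prop_strong_stable}, a genuinely delicate statement for \emph{topological} Anosov flows, with a corrected proof in \cite{Potrie25}). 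With that metric, $h(x,y)$ is defined as the unique point on the orbit $x$ at strong-stable arclength exactly $1$ from the orbit $y$; well-definedness, bijectivity and continuity all come from the strict monotonicity and surjectivity onto $(0,\infty)$ of the arclength function along the orbit, which the adapted metric guarantees. Your alternative suggestion --- to read off $t_p$ from ``the product structure together with the transverse foliation $\bfu$ and the flow'' --- is not a construction: the unstable foliation gives no canonical parametrization of points along an orbit, and without some uniform contraction statement there is no reason the assignment you want exists, is continuous, or is fiberwise onto. So as written this is a proof outline with the key lemma missing, not a proof.

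A secondary gap is the non-transversally-orientable case. Your proposed fix --- a ``group-invariant identification of the two rays of $\cF_1(x)\smallsetminus\{x\}$'' --- is not justified: the obstruction in this case is precisely that no invariant choice of positive ray exists, and an invariant identification of the two rays is not obviously available either. The paper resolves this differently, using Candel's uniformization to put a leafwise hyperbolic metric on the stable foliation, defining canonical equivariant leafwise involutions $i_\gamma$ from the resulting ideal points of orbits, and replacing $W^>_s$ by the space $W_s$ of triples $(\gamma, y, i_\gamma(y))$; this is Theorem \ref{thm_nonorientable_model_flow}. Your concluding diagram chase (including the observation that $\bar f$ preserves or reverses leafwise orientation globally, and that stable-to-stable is what pins down the flow direction) matches the paper and is fine once the model homeomorphism exists.
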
 
Barbot's statement assumes the flows are smooth Anosov, but the proof works in the topological case.  

\begin{remark} 
The assumption that $\bar f$ respects stable foliations is used only to ensure that $f$ preserves direction, i.e., orientation of flow lines.  Using the connectedness of the plane, one can easily show from the dynamics of the action of $\pi_1(M)$ on orbit spaces that any equivariant homeomorphism must send the pair of stable/unstable foliations for one flow to the pair for the other, but might swap stable and unstable (see \cite[Proposition 1.3.19]{BM_book}).  Sending stable to stable ensures that the direction of the flow is not reversed.  
\end{remark} 

Barbot's original proof uses Haefliger's classifying spaces 
for the holonomy groupoid of the orbit foliations (see \cite{Haefliger}).  We give a proof that shows the flow can be canonically reconstructed from the action of $\pi_1(M)$ on the orbit space.  
The case where one of the foliations is {\em transversally orientable} is particularly simple, so we state this first.  The general case is a small modification, done in Theorem \ref{thm_nonorientable_model_flow}.

If $\cF^u$ (for example) is transversally oriented, this induces a $\pi_1(M)$-invariant transverse orientation of the one-dimensional foliation $\bfu$ on $\orb$, and hence a $\pi_1(M)$-invariant leafwise orientation on $\bfs$.  
Fixing such a choice, for $x\in \orb$, we let $\bfs_>(x)$ be the connected component of $\bfs(x)\smallsetminus \{x\}$ on the positive side of $x$, and define 
\[
W^>_s :=\{ (x,y) \in \orb \times \orb \mid y \in \bfs_>(x)\},
\]
equipped with the subset topology from $\orb \times \orb$.  
This has a natural action of $\pi_1(M)$, induced from the diagonal action $g(x,y) = (g(x), g(y))$ on $\orb \times \orb$, as well as a foliation whose leaves are the subsets of $W^>_s$ with constant first-coordinate.  

\begin{theorem}[Reconstruction theorem, special case] \label{thm_transverse_orientable_model_flow}
Let $\pi_1(M)\curvearrowright (\orb,\bfs,\bfu)$ be the induced action of an Anosov flow with an invariant leafwise orientation of $\bfs$. Let $\Psi$ be the constant first-coordinate foliation on $W^>_s$.  

Then the action of $\pi_1(M)$ on $W^>_s$ is properly discontinuous and free, 
$W^>_s/\pi_1(M)$ is homeomorphic 
to $M$, and $\Psi$ descends to a $1$-dimensional foliation on $W^>_s/\pi_1(M)$ such that any flow-parametrization of $\Psi$ is orbit equivalent to $\flow$. 
\end{theorem}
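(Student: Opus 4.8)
The plan is to construct an explicit $\pi_1(M)$-equivariant homeomorphism $\Phi\colon\M\to W^>_s$ carrying each orbit of $\hflow$ onto a leaf of $\Psi$; the three assertions then follow formally by transport of structure.

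\emph{The homeomorphism.} Write $\pi\colon\M\to\orb$ for the orbit projection, and for $\tilde m\in\M$ set $x=\pi(\tilde m)$. Define $\Phi(\tilde m)=(x,y(\tilde m))$, where $y(\tilde m)\in\bfs_>(x)$ is chosen to depend continuously and $\pi_1(M)$-equivariantly on $\tilde m$ and so that, for every $x$, the map $\tilde m\mapsto y(\tilde m)$ restricts to a homeomorphism of the orbit $\tilde o_x$ onto the ray $\bfs_>(x)$. (The weak stable leaf $\hfs(\tilde m)$ is a plane foliated by orbits with leaf space the oriented line $\bfs(x)$, and $\tilde o_x$ cuts it into two half-planes; one produces such a $y$ by choosing, continuously in $\tilde m$, an orbit in the positive half-plane, for instance using a background metric on $M$ lifted to $\M$.) Granting such a $y$, the map $\Phi$ is continuous and equivariant, and --- its first coordinate being $\pi$, and $y|_{\tilde o_x}$ a bijection onto $\bfs_>(x)$ --- it is a bijection, with $\Phi(\tilde o_x)=\{x\}\times\bfs_>(x)$, a leaf of $\Psi$. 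Since $\M$ and $W^>_s$ are both topological $3$-manifolds ($W^>_s$ being a fibre bundle with open-interval fibre over the contractible plane $\orb$, so $W^>_s\cong\bR^3$), invariance of domain makes the continuous bijection $\Phi$ open, hence a homeomorphism.

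\emph{The main obstacle} is precisely the construction of $y$ with the stated fibrewise bijectivity, and it is the only place the Anosov hypothesis is genuinely needed: as $\tilde m$ flows forward along $\tilde o_x$ the uniform contraction on stable leaves drives $y(\tilde m)$ monotonically to the far end of $\bfs_>(x)$, while flowing backward drives $y(\tilde m)\to x$, so that with continuity $y|_{\tilde o_x}\colon\tilde o_x\to\bfs_>(x)$ is a continuous bijection. Everything else is formal.

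\emph{Conclusions.} The deck action $\pi_1(M)\curvearrowright\M$ of the covering $\M\to M$ is free, properly discontinuous and cocompact; transporting it through the equivariant homeomorphism $\Phi$ gives the same properties for $\pi_1(M)\curvearrowright W^>_s$, and $\Phi$ descends to a homeomorphism $M=\M/\pi_1(M)\cong W^>_s/\pi_1(M)$. (Freeness also admits a soft direct proof: a nontrivial $g$ fixing $(x,y)\in W^>_s$ would fix the distinct points $x,y$ of $\bfs(x)$; but if $g$ fixes $x$ then $\tilde o_x$ is $g$-invariant and descends to a periodic orbit, on whose weak stable leaf $g$ acts --- by the Anosov property --- with $x$ as its unique fixed point on $\bfs(x)$, while, preserving the orientation of $\bfs$, it preserves the ray $\bfs_>(x)$ and hence fixes no point of it, contradicting $g(y)=y$.) Finally, $\Phi$ sends $\hflow$-orbits to leaves of $\Psi$, so $\Psi$ descends to a $1$-dimensional foliation on $W^>_s/\pi_1(M)$, naturally oriented by the push-forward of the flow direction, and $\Phi$ conjugates a reparametrisation of $\hflow$ to a flow-parametrisation of $\Psi$. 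Since any two flow-parametrisations of $\Psi$ have the same oriented orbits, they are orbit equivalent via the identity; hence every flow-parametrisation of $\Psi$ is orbit equivalent to $\flow$ through $\Phi$. (The same homeomorphism, together with cocompactness of $\pi_1(M)\curvearrowright\M$, is what will give cocompactness of the actions on $W^\ast_1$ and $W^\ast_2$ needed for Theorem~\ref{thm_converse}.)
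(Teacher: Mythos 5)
Your proposal is correct and is essentially the paper's own argument: both construct an explicit $\pi_1(M)$-equivariant homeomorphism between $\M$ and $W^>_s$ identifying $\hflow$-orbits with leaves of $\Psi$, using uniform contraction along strong stable leaves to parametrize points of an orbit $\tilde o_x$ by the orbits of $\bfs_>(x)$ (the paper's map is your $\Phi^{-1}$: it sends $(x,y)$ to the unique point of $\tilde o_x$ at strong-stable distance exactly $1$ from $y$). The only place you are looser than the paper is the choice of metric: to make ``the orbit at unit strong-stable distance'' well-defined and monotone along the flow one should use an adapted metric (and, for topological Anosov flows, first pass to an orbit-equivalent flow admitting a strong stable foliation, as in Proposition \ref{prop_strong_stable}), rather than an arbitrary background metric.
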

The same holds replacing $\fs$ with $\fu$, and defining the analogous space $W^>_u$.

The proof will use strong stable foliations for $\flow$ and an adapted metric on it.  The existence of this is classical for smooth Anosov flows see, e.g., \cite[Prop 5.1.5]{FH19}. For topological Anosov flows, one can always find an orbit equivalent one that will admit a strong stable or unstable foliation, but maybe not both, using the following result: 
\begin{proposition}[\cite{BFP23}, Corollary 5.23 and \cite{Potrie25}, Proposition 5.3]\label{prop_strong_stable}
If $\flow$ is a topological Anosov flow on $M$, there exists an orbit equivalent flow $\psi$  such that $\psi$ admits a strong stable invariant distribution $E^{ss}$ and an adapted metric on $M$, i.e., such that for all $v\in E^{ss}$ and $t>0$, we have 
$ \lVert D\flow^t(v)\rVert \leq e^{-t}\lVert v \rVert$.
\end{proposition}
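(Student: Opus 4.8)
\textbf{Plan for Proposition \ref{prop_strong_stable}.}

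The statement is a citation of two results in the literature, so the "proof" is really an explanation of how to assemble them; I will present the argument that the references provide. The plan is to separate the two claims: (1) there is an orbit equivalent flow $\psi$ admitting a strong stable foliation with one-dimensional leaves, i.e.\ a continuous $\psi$-invariant splitting of the weak stable plane field, and (2) this $\psi$ may be further arranged so that the metric can be chosen adapted, meaning strong stable vectors contract at a definite exponential rate. For (1), the starting point is that a topological Anosov flow on a compact $3$-manifold has weak stable and weak unstable foliations $\fs$, $\fu$ which are transverse $2$-dimensional foliations, and the flow direction is the intersection $T\fs \cap T\fu$. By the work of Barbot--Fenley--Potrie on self-orbit equivalences and smoothability (the source of \cite[Corollary 5.23]{BFP23}), one can replace $\flow$ by an orbit equivalent flow for which $\fs$ is a foliation by $C^1$ leaves, or more precisely, whose weak stable foliation carries a one-dimensional continuous subfoliation transverse to the flow; this subfoliation is the candidate strong stable foliation $\fss$. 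The key point here is that orbit equivalence is a flexible relation: one is free to reparametrize and to push the flow by a homeomorphism, and within an orbit equivalence class one can select a representative with better regularity of the invariant structures along one of the two foliations (but, as the statement warns, perhaps not simultaneously on both, since the strong stable and strong unstable data cannot in general both be smoothed).

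For (2), once $\psi$ has a strong stable distribution $E^{ss}$, one must produce a metric on $M$ for which $\lVert D\psi^t(v)\rVert \le e^{-t}\lVert v\rVert$ for all $v \in E^{ss}$ and $t > 0$. This is the standard "adapted metric" or "Lyapunov metric" construction (as in \cite[Prop 5.1.5]{FH19}), carried out here in the topological/non-uniformly-smooth setting using \cite[Proposition 5.3]{Potrie25}. The recipe: pick any background metric $g_0$; uniform contraction of $E^{ss}$ under the time-one map $D\psi^1$ (which follows from the Anosov property, after possibly rescaling the time parametrization so the contraction beats $e^{-1}$) lets one define
\[
\langle v, v\rangle_x := \int_0^{T} e^{2s}\, \lVert D\psi^s(v)\rVert_{g_0}^2 \, ds
\]
for $v \in E^{ss}_x$, for suitable $T$, and then extend to all of $TM$ by declaring the flow direction and a complementary plane field orthogonal with their $g_0$-norms (or a similar averaging there too). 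Differentiating the integral along the flow gives the desired inequality. In the topological category the only subtlety is that $E^{ss}$ is merely continuous, so this produces a continuous, not smooth, metric, which is exactly what the statement asserts.

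I expect the main obstacle is (1): asserting that within the orbit equivalence class one can find a representative with an honest invariant strong stable distribution. For genuinely smooth Anosov flows this is automatic, but for topological Anosov flows the existence of such a representative is a nontrivial input, and it is precisely the content being imported from \cite{BFP23} and \cite{Potrie25}; a self-contained proof would require reproducing their smoothing/normal-form machinery. For the purposes of this paper, though, the honest move is to cite it: the Proposition is stated as a quotation of those two references, and the only ``proof'' obligation is to explain, as above, that \cite{BFP23} supplies the strong stable foliation up to orbit equivalence and \cite{Potrie25} supplies the adapted metric for such a flow, after the routine time-rescaling needed to normalize the contraction rate to $e^{-t}$. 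Everything downstream in the paper uses only the conclusion, so no further unpacking is needed here.
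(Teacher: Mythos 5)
Your proposal matches the paper's treatment: the paper gives no proof of this proposition at all, simply citing \cite{BFP23} and \cite{Potrie25} (remarking only that the proof in \cite{BFP23} contains an error corrected in \cite[Proposition 5.3]{Potrie25}), and everything downstream uses only the conclusion, exactly as you say. The only small inaccuracy is your division of labor — the paper cites \cite{Potrie25} as correcting the proof of the existence statement from \cite{BFP23} rather than as supplying the adapted metric separately — but this does not affect the correctness of your reading.
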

The proof of this proposition has an error in \cite{BFP23}, but a correction is given in \cite[Proposition 5.3]{Potrie25}.  

\begin{proof}[Proof of Theorem \ref{thm_transverse_orientable_model_flow}]
Let $\flow$ be an Anosov flow on $M$ satisfying the hypotheses of Theorem \ref{thm_transverse_orientable_model_flow}. By Proposition \ref{prop_strong_stable}, we may assume that $\flow$ admits a strong stable distribution and  we have an associated adapted metric.

We define a map $h\colon W^>_s \to \wt{M}$ as follows:  A point $(x,y) \in W^>_s$ specifies two orbits $x$ and $y$, on the same weak-stable leaf.  Let 
$h(x,y)$ be the point on the orbit $x$ such that the distance along $\fss$ to the orbit $y$ is exactly 1 unit.  
This specifies a unique point thanks to our choice of adapted metric, where strong stable leaves are uniformly contracted under the flow.  
It is easy to see that $h$ is bijective and continuous, with continuous inverse, and thus a homeomorphism.  

We now show that $h$ is $\pi_1(M)$--equivariant, where the action of $\pi_1(M)$ on $W^>_s$ is the (diagonal) action induced from the orbit space.  Given $g \in \pi_1(M)$ and $(x,y) \in W^>_s$, by definition $h(g(x), g(y))$ is the point on the orbit of $g(x)$ whose distance along the (lifted) strong stable foliation is 1 from the orbit $g(y)$.  Since deck transformations act by isometries on $\wt{M}$ and preserve foliations, this is simply the image under $g$ of the point on orbit $x$ distance $1$ from the orbit $y$, in other words equal to $g( h(x,y))$.  

Thus, $h$ descends to a homeomorphism $\bar{h}\colon W^>_s/\pi_1(M) \to M$.  The constant first-coordinate foliation on $W^>_s$ is invariant under $h$ and its image under $\bar{h}$ is exactly the foliation by orbits of $\flow$.  This completes the proof.  
\end{proof}

Note that this already proves Theorem \ref{thm_converse} for Anosov flows.

Barbot's Theorem \ref{thm_action_determines_OEflow} now follows immediately (in the transversally orientable case): if $\flow$ and $\psi$ are two transversally orientable flows with a $\pi_1(M)$-equivariant homeomorphism between their respective orbit spaces $\orb$ and $\mathcal{O}_\psi$ with induced actions, then by construction the associated spaces $W^>_s(\flow)/\pi_1(M)$ and $W^>_s(\psi)/\pi_1(M)$ will be homeomorphic, via a homeomorphism respecting orbits of the induced 1-dimensional foliation and inducing the map $f_*$ on $\pi_1(M)$.  \qed

\subsection{General case} 

In the case where neither foliation admits an invariant orientation, the space $W^>_s$ (or its analogue $W^>_u$) does not inherit an action of $\pi_1(M)$.  However, this can be solved by a small modification to the definition.  For this, we use the existence of a {\em leafwise hyperbolic metric} on $M^3$.  
As there are no transverse invariant measures to the foliations of an Anosov flow, Candel's Uniformization Theorem (see, e.g., \cite[Section I.12.6]{CanCon}) implies that, 
for any Anosov flow, there exists a metric on $M$ such that its lift to the universal cover $\wt M$ has the property that all leaves of $\wt \cF^s$ are isometric to the hyperbolic plane. Moreover, the orbits are quasi-geodesics with a common forward endpoint, and pairwise distinct backwards endpoints (see e.g., \cite{BFP23}).  
 
Using such a choice of metric, we can define a canonical family of {\em leafwise involutions}.  For an orbit $\gamma$ of $\flow$, let $r_\gamma$ be the (isometric) reflection of $\hfs(\gamma)$ along the geodesic with endpoints shared by $\gamma$, and define an involution $i_\gamma$ on the set of orbits in $\hfs(\gamma)$ by sending an orbit $\alpha$ with negative endpoint $\zeta$ to the orbit with negative endpoint $i_\gamma(\zeta)$.  The fact that $\pi_1(M)$ acts on $\wt M$ by isometries means that this family of involutions is equivariant; precisely, for $g \in \pi_1(M)$ we have 
\begin{equation} \label{eq_Ws_equivariance}
 g i_\gamma(\alpha) = i_{g \gamma}(g \alpha)
 \end{equation}

Using this, we define the space 
\[
W_s :=\{ (\gamma, y, i_\gamma(y)) \in \orb \times \orb \times \orb \mid y \in \bfs(\gamma) \smallsetminus \gamma\}
\]
and now prove the following: 

\begin{theorem}[Reconstruction theorem] \label{thm_nonorientable_model_flow}
Let $\flow$ be an Anosov flow on $M$ and let $W_s$ be the space defined above, for some choice of leafwise hyperbolic metric. 
Let $\Psi$ be the 1-dimensional foliation of $W_s$ by constant first-coordinate leaves.  

Then $W_s$ has a natural coordinate-wise action of $\pi_1(M)$ which is properly discontinuous, free and cocompact, 
$W_s/\pi_1(M)$ is homeomorphic 
to $M$, and $\Psi$ descends to a $1$-dimensional foliation of $W_s/\pi_1(M)$ any flow-parametrization of which is orbit equivalent to $\flow$. 
\end{theorem}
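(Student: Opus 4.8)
The plan is to mimic the proof of Theorem \ref{thm_transverse_orientable_model_flow}, replacing the strong stable foliation with the leafwise hyperbolic geometry. First I would fix an Anosov flow $\flow$ on $M$ together with a choice of leafwise hyperbolic metric as above, so that in $\wt M$ each leaf of $\hfs$ is a copy of $\bH^2$ in which the orbits are quasigeodesics sharing a common forward endpoint and with distinct backward endpoints. The key geometric input is that for each orbit $\gamma$ in a leaf $L = \hfs(\gamma)$ and each point $y$ of $\bfs(\gamma)\smallsetminus\gamma$, the triple $(\gamma, y, i_\gamma(y))$ determines, and is determined by, a single point of $L$: namely, the orbit $\gamma$ and the orbit $i_\gamma(y)$ have distinct backward endpoints, hence their quasigeodesic representatives meet the geodesic $\delta$ (the axis of $r_\gamma$, which is the ``$\bfs(\gamma)$-perpendicular through $\gamma$'' in the ideal picture) -- more to the point, one wants the unique point of $L$ lying on orbit $\gamma$ whose strong-stable/geodesic-perpendicular projection onto the geodesic fixed by $r_\gamma$ is equidistant, in the appropriate hyperbolic sense, from the orbits $y$ and $i_\gamma(y)$. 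I would set $h(\gamma, y, i_\gamma(y))$ to be that point; the reflection-symmetry built into the definition of $i_\gamma$ is exactly what makes this canonical point well-defined and continuous.

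Next I would verify that $h \colon W_s \to \wt M$ is a homeomorphism. Continuity in both directions follows from continuity of the orbit space projection, continuous variation of the leafwise hyperbolic metric, and continuous dependence of the ideal endpoints of orbits (cited above). Bijectivity is seen leaf by leaf: fixing $\gamma$, the map $y \mapsto h(\gamma, y, i_\gamma(y))$ identifies $\bfs(\gamma)\smallsetminus\gamma$ with the orbit $\gamma \subset L$ (minus nothing, or a suitable open arc), since moving $y$ along $\bfs(\gamma)$ sweeps the backward endpoint monotonically and hence sweeps the chosen point monotonically along $\gamma$. Then I would check $\pi_1(M)$-equivariance: $\pi_1(M)$ acts on $W_s$ coordinatewise, and since deck transformations act on $\wt M$ by leafwise isometries preserving $\hfs$ and the orbit foliation, the equivariance relation \eqref{eq_Ws_equivariance} $g\, i_\gamma(\alpha) = i_{g\gamma}(g\alpha)$ guarantees $h(g\gamma, gy, g\,i_\gamma(y)) = g\, h(\gamma, y, i_\gamma(y))$, exactly as in the transversally oriented case. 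Consequently the action of $\pi_1(M)$ on $W_s$ is properly discontinuous, free and cocompact because the corresponding action on $\wt M$ is, and $h$ descends to a homeomorphism $\bar h \colon W_s/\pi_1(M) \to M$.

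Finally, the constant-first-coordinate foliation $\Psi$ on $W_s$ is $h$-invariantly carried to the foliation of $\wt M$ by (lifted) flow orbits: indeed, fixing $\gamma$, the leaf $\{(\gamma, y, i_\gamma(y))\}$ maps under $h$ bijectively onto the orbit $\gamma \subset \wt M$. Hence $\Psi$ descends to the orbit foliation of $\flow$ on $M$ under $\bar h$, and any flow-parametrization of the descended $1$-dimensional foliation is a reparametrization of $\flow$, so in particular orbit equivalent to it. This completes the proof.

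I expect the main obstacle to be step one: pinning down the precise canonical point of $L$ attached to a triple $(\gamma, y, i_\gamma(y))$ and proving it is well-defined and varies continuously. One must use the hyperbolic geometry of the leaf carefully -- the orbits are only quasigeodesics, not geodesics, so ``equidistant from $y$ and $i_\gamma(y)$'' has to be interpreted via the ideal boundary and the reflection $r_\gamma$ rather than via honest leafwise distance, and one needs the fact that orbits in a leaf have a common forward endpoint and pairwise distinct backward endpoints to know the construction does not degenerate. Everything after that is a routine adaptation of the transversally orientable argument.
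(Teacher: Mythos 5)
Your overall architecture is the paper's: build a $\pi_1(M)$-equivariant homeomorphism $h\colon W_s \to \wt M$ leaf by leaf, use Equation \eqref{eq_Ws_equivariance} for equivariance, and descend. But the one step you yourself flag as the main obstacle --- pinning down the canonical point of $\gamma$ attached to a triple $(\gamma, y, i_\gamma(y))$ --- is genuinely not resolved, and the condition you propose does not define anything. The reflection $r_\gamma$ is a leafwise isometry that fixes its axis $\delta$ pointwise and swaps the orbits $y$ and $i_\gamma(y)$; hence \emph{every} point of $\delta$ is equidistant from $y$ and $i_\gamma(y)$, and the ``equidistant projection'' condition is vacuously satisfied by every point of $\gamma$. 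No amount of care with quasigeodesics versus geodesics fixes this: a reflection-symmetric condition cannot single out a point on the fixed locus of the reflection. You would need an additional normalization (some quantity that varies strictly monotonically along $\gamma$), and the leafwise hyperbolic metric gives you no monotonicity to work with.

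The paper's resolution is to use the hyperbolic metric \emph{only} to define the involutions $i_\gamma$ (hence the space $W_s$), and then to forget it entirely: one switches to the adapted metric of Proposition \ref{prop_strong_stable}, for which the flow strictly contracts strong stable distances, and defines $h(\gamma, y, i_\gamma(y))$ to be the unique point $p$ on the orbit $\gamma$ at which the distance along $E^{ss}(p)$ \emph{between the two orbits $y$ and $i_\gamma(y)$} equals $1$. Uniqueness comes from strict monotone contraction along the flow direction, and well-definedness on the unordered pair (i.e., independence of any choice of leafwise orientation) comes from the manifest symmetry $h(\gamma, y, i_\gamma(y)) = h(\gamma, i_\gamma(y), y)$. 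Once $h$ is defined this way, the rest of your argument (equivariance via \eqref{eq_Ws_equivariance}, descent to $\bar h\colon W_s/\pi_1(M)\to M$, identification of $\Psi$ with the orbit foliation) goes through exactly as you wrote it.
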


\begin{proof} 
The proof is a simple adaptation of that for Theorem \ref{thm_transverse_orientable_model_flow}.
Let $\flow$ and $W_s$ be as above.  Forgetting the leafwise hyperbolic metric used to define $W_s$, we now choose a well-adapted metric for the flow, and define a map $h\colon W_s \to \wt M$ by defining 
$h(\gamma, y, i_\gamma(y))$ to be the unique point $p$ on the orbit $\gamma$ so that the distance along the strong stable leaf $E^{ss}(p)$ between the orbits $y$ and $i_\gamma(y)$ is exactly 1.  

Note that $h(\gamma, y, i_\gamma(y)) = h(\gamma, i_\gamma(y), y)$, and, for $g \in \pi_1(M)$, Equation \eqref{eq_Ws_equivariance} implies that 

$$g h(\gamma, y, i_\gamma(y)) = h(g \gamma, gy,  i_{g \gamma}(g y)).$$  

\noindent
As before, one can check directly that $h$ is a homeomorphism $W_s \to \wt M$, and so $W_s/\pi_1(M) \cong M$, and the constant first-coordinate foliation agrees with the orbit foliation of $\flow$. 
\end{proof} 

Barbot's theorem now follows using the same argument as in the transversally orientable case. 

\section{The nonsingular case: proof of Theorems \ref{thm_compact} and \ref{thm_noncompact}} \label{sec_nonsingular_main_theorem}

In this section, we will prove Theorem \ref{thm_compact} and its noncompact version (Theorem \ref{thm_noncompact}), i.e., deal with the case of a nonsingular bifoliated plane. The proofs in the singular case follow an identical strategy but are more notationally heavy, so for readability we treat the nonsingular case here and describe the necessary modifications to account for prongs in Section \ref{sec_pA_case}.

Recall that a group $G$ acts properly discontinuously on a 
space $Z$ if for any compact set $K \subset Z$ we have $g_n(K) \cap K 
\not = \emptyset$ for at most finitely many $g_n$ in $G$.
If $Z$ is first countable, this is equivalent to the following
condition: there is no sequence of points $z_n$ in $Z$
and $g_n$ pairwise distinct elements in $G$ such that
$z_n$ converges (to some point $z$ in $Z$) and $g_n(z_n)$
converges (to some point $v$ in $Z$).

For the rest of the section, we assume that $(P, \cF_1, \cF_2)$ is a nonsingular bifoliated plane without $\cF_1$-infinite product regions, and $W^>_1$ as defined in Definition \ref{def_W1}. We further suppose that $G<\Aut_1^+(P)$ 
is a torsion-free subgroup that acts properly discontinuously on  $W^>_1$. The two cases of $G$ being cocompact or not will be treated at the end.

\begin{lemma} \label{lem_torsion_free}
Under the assumptions above, $G$ acts freely on $W_1^>$.
\end{lemma}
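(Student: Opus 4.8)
The plan is to show that if some nontrivial $g \in G$ fixes a point $(x,t) \in W_1^>$, then the diagonal action of $\langle g \rangle$ on $W_1^>$ fails to be properly discontinuous, contradicting our standing assumption. Since $g(x,t) = (x,t)$ means $g(x) = x$ and $g(t) = t$ with $t \in \cF_1^>(x)$, the element $g$ fixes two distinct points on the leaf $\cF_1(x)$, hence (being leafwise-orientation-preserving and acting on a line) fixes the entire closed segment $[x,t] \subset \cF_1(x)$ pointwise. First I would record this: a nontrivial $g$ with a fixed point in $W_1^>$ fixes a nondegenerate segment $J = [x,t]$ of an $\cF_1$-leaf pointwise.

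Next I would analyze the action of $g$ on the transverse foliation $\cF_2$ near $J$. For each point $z$ in a neighborhood of $J$, $g$ preserves $\cF_2(z)$ setwise if $z \in J$ (since $g(z) = z$), and more generally $g$ permutes the $\cF_2$-leaves through points of $J$. Because $g$ fixes every point of $J$, it fixes every leaf $\cF_2(z)$ for $z \in J$ (as a set). The key step is to build an infinite product region or otherwise contradict proper discontinuity directly. Here is the cleaner route I expect to take: consider the union $Q$ of the $\cF_2$-leaves through points of the segment $J$; this is a $g$-invariant "rectangle-like" region, and $\cF_1$ restricted to $Q$ gives a product structure. If $g$ were nontrivial, it must move some point, so pick $z \in Q$ with $g(z) \neq z$; then the $\cF_1$-leaf through $z$ meets $Q$, and iterating $g$ on $z$ produces a sequence of points. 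To contradict proper discontinuity I would instead argue: take a point $(a, b) \in W_1^>$ with $a, b \in J$ interior points (possible since $J$ is nondegenerate and $\cF_1^>$ is open), so $g^n(a,b) = (a,b)$ for all $n$, and this is already a violation — wait, that only uses one group element repeatedly, which is allowed. The actual contradiction must come from the torsion-freeness hypothesis combined with finite order: so the real claim is that $g$ fixing $(x,t)$ forces $g$ to have finite order, contradicting torsion-freeness.

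So the argument restructures as follows. Suppose nontrivial $g$ fixes $(x,t) \in W_1^>$, so $g$ fixes the segment $J = [x,t]$ pointwise. I claim $g$ is the identity, which contradicts nontriviality — or failing that, $g$ has finite order. The mechanism: $g$ fixes each leaf $\cF_2(z)$, $z \in J$, setwise and leafwise-orientation-preservingly (as $G < \Aut_1^+(P)$, and preservation of $\cF_1$-leafwise orientation together with fixing $J$ pins down the transverse behavior). A leafwise-orientation-preserving homeomorphism of a line (here $\cF_2(z) \cong \bR$) that fixes a point and... this needs the fixed point $x \in J \cap \cF_2(x)$. The heart of the matter — and the step I expect to be the main obstacle — is ruling out the possibility that $g$ acts nontrivially on the leaves $\cF_2(z)$ by a contraction/expansion while still fixing $J$: I would show such behavior, propagated along $J$, forces either an $\cF_1$-infinite product region (excluded by hypothesis) or a failure of proper discontinuity on $W_1^>$ via a sequence $(z_n, w_n) \to (z,w)$ with $g^{k_n}(z_n, w_n)$ also convergent for suitable exponents $k_n \to \infty$. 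Concretely, I would exhibit points pushed toward the boundary of a product region under powers of $g$, set these up as the required convergent sequences, and conclude $\langle g \rangle$ does not act properly discontinuously unless $g = \mathrm{id}$. Since $G$ is torsion-free this suffices; I will double-check whether torsion-freeness is needed at all here or only the proper discontinuity, and state the lemma's proof using whichever is cleanest.
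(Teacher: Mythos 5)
There is a genuine gap, and ironically you walked right past the correct argument. When you wrote ``$g^n(a,b)=(a,b)$ for all $n$, and this is already a violation --- wait, that only uses one group element repeatedly, which is allowed,'' you misread the definition of proper discontinuity. The condition is that for each compact $K$ only \emph{finitely many group elements} $h\in G$ satisfy $h(K)\cap K\neq\emptyset$; it says nothing about ``repeated use'' of an element. If $g$ has infinite order, the powers $g^n$ are pairwise distinct elements of $G$, and taking $K=\{(x,t)\}$ (a compact set) we get $g^n(K)\cap K\neq\emptyset$ for infinitely many distinct elements --- a direct contradiction with proper discontinuity. Hence $g$ must have finite order, and torsion-freeness forces $g=\mathrm{id}$. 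This two-line argument is exactly the paper's proof. Everything in your proposal after that point --- the propagation along $J$, the attempted construction of an infinite product region, the sequences $(z_n,w_n)$ with exponents $k_n\to\infty$ --- is unnecessary, and the ``heart of the matter'' step you flag as the main obstacle is never actually carried out, so the proof as written is incomplete.

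A secondary error: your opening claim that a leafwise-orientation-preserving homeomorphism fixing the two points $x$ and $t$ on the line $\cF_1(x)$ must fix the closed segment $[x,t]$ \emph{pointwise} is false. An orientation-preserving homeomorphism of $\bR$ fixing two points can act nontrivially on the interval between them (e.g.\ fix $0$ and $1$ and act by $s\mapsto s^2$ in between). Fortunately this claim is not needed once the argument above is in place, but as stated it would be a second gap in your write-up.
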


\begin{proof} 
Let $(x, y) \in W_1^>$, and suppose $g$ fixes $(x,y)$. Then $K:=\{(x,y)\}$ is a compact set and $g^n(K) \cap K \neq \emptyset$ for all $n$.
 Since the action is properly discontinuous, this implies that $g$ is torsion and hence trivial. 
\end{proof}

As a consequence of Lemma \ref{lem_torsion_free}, $M: = W^>_1/G$ is a 3-manifold.  Furthermore, the sets $L_x := \{(x,t) \in W^>_1\}$ descend to form leaves of a 1-dimensional foliation of $M$.  
Let $\flow$ be any flow on $M$ with these leaves as orbits, and let $\hflow$ denote its lift to $\wt M = W^>_1$. 
By construction, the orbits of $\hflow$ are the sets $L_x$.  

Our goal is to show that $\flow$ is expansive. Since there are several equivalent characterizations of expansivity on \emph{compact} $3$-manifolds, but these may fail to be equivalent in the noncompact case, we state the definition we will take:
\begin{definition}\label{def_expansive}
We  say that a nonsingular flow $\psi$ on a 3-manifold $X$ is \emph{expansive} if the lifted flow $\wt\psi$ on the universal cover $\wt X$ has properly embedded orbits, and there exists a metric $d$ on $M$ and a constant $\delta>0$ such that the following property is satisfied: given $x, y \in \wt X$ if there exists a reparameterization $\tau$ with $\tau(0)=0$ such that  $\wt d(\wt\psi^t(x), \wt\psi^{\tau(t)}(y))< \delta$ for all $t$, then $x$ and $y$ are on the same orbit.
\end{definition} 

\begin{rem}
The definition above is not the standard definition of expansivity\footnote{In \cite{BW72}, a flow is called expansive if there exists a metric $d$ on $M$ satisfying that for all $\epsilon>0$ there exists $\delta>0$ such that, for any $x, y \in X$, if there exists a reparameterization $\tau$ with $\tau(0)=0$ such that $d(\psi^t(x), \psi^{\tau(t)}(y))< \delta$ for all $t$, then $x = \psi^s(y)$ where $|s|<\epsilon$.}, as introduced by \cite{BW72}. When $X$ is a compact 3-manifold, work of Inaba and Matsumoto \cite{IM90} and Paternain \cite{Pat93} implies they are equivalent.

The reason for not repeating the standard definition verbatim, is that when $X$ is noncompact, it  becomes dependent on the \emph{parametrization} of the flow, as one could take a flow satisfying \cite{BW72} definition and then slow it down outside of compact sets so that it takes arbitrarily long for the flow to leave small balls. If a flow is $C^1$ and satisfies the definition above, then there always exists a reparametrization that will satisfy the definition of \cite{BW72}. See, \cite{JNY20} where this is studied under the name of \emph{rescaled expansivity}. When $\flow$ is only assumed to be continuous, it seems likely that a flow satisfying our Definition \ref{def_expansive} would be at least orbit equivalent to one satisfying the definition of \cite{BW72}, but for us it is simpler to work with this topological version.  
\end{rem}

In order to prove that $\flow$ is expansive, we will cover $M$ by flow-boxes that are adapted to the description of $W^>_1$ as pairs of points in $P$. 

\begin{definition}\label{def_good_neighborhood}
A \emph{good neighborhood} of a point $(x,y)\in W^>_1$ is a subset $V_{(x,y)} = (R_x \times R_y) \cap W^>_1$ such that (see Figure \ref{fig_nbhd}):
\begin{enumerate}[label=(\roman*)]
\item $R_x$ and $R_y$ are neighborhoods of $x$ and $y$ (respectively) whose closures are each homeomorphic by a foliation-preserving homeomorphism to a rectangle $[0,1]^2$ with the trivial foliation. 
\item The saturations of $R_x$ and $R_y$ by $\cF_1$ leaves agree, and 
\item $\overline{R_x}\cap \overline{R_y} = \emptyset$.
\end{enumerate}
\end{definition}
 \begin{figure}
   \labellist 
  \small\hair 2pt
     \pinlabel $x$ at 50 21
     \pinlabel $y$ at 235 21
          \pinlabel $R_x$ at 50 0
     \pinlabel $R_y$ at 240 0
 \endlabellist
     \centerline{ \mbox{
\includegraphics[width=10cm]{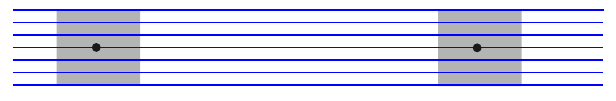}}}
\caption{A good neighborhood; $\cF_1$ is the horizontal foliation; the orientation of leaves is from left to right}
 \label{fig_nbhd} 
\end{figure}

Note that, if $l$ is a $\cF_2$-leaf through $R_y$, then $(R_x \times l) \cap W^>_1 \subset V_{(x,y)}$ is a local section of $\hflow$.  Thus, good neighborhoods are flow boxes for $\hflow$, and any sufficiently small good neighborhood projects to a flow box for $\flow$ on $M$. 

Since our goal is to prove expansivity of the flow, and that this is a metric notion, in the case when $M$ is noncompact, some metrics may fail to see that expansivity. So our first goal is to build a good metric. Of course, when $M$ is assumed to be compact, this step is unnecessary. The key is to build a metric $d$ which admits a constant $0<\delta$ such that any ball of size $\delta$ in $M$ is contained in the projection of a good neighborhood of $W_1^>$.

Fix a countable, locally finite, cover $\mathcal U$ of $M$ by relatively compact flow boxes $U_i$, such that each box is the injective projection of a good neighborhood in $W^>_1$. Fix also a complete Riemannian metric $d_0$ on $M$ and an exhaustion by compact sets $K_n$ taken to be the closed balls of radius $n$ for $d_0$ about some point $x_0$.

If the Lebesgue number for $d_0$ of the cover $\mathcal U$ is positive, then we call $\delta_0$ that Lebesgue number, and we do not have to modify the metric. Otherwise, we will modify $d_0$ in order to obtain a metric with positive Lebesgue number.

Call $\delta(n)$ the Lebesgue number for $d_0$ of the cover $\mathcal U$ for the compact $K_n$. By definition, $\delta(n)$ is a non-increasing sequence. Then, inductively we define a metric $d_n$ by scaling the metric $d_{n-1}$ by a bump function which takes value of $\delta_0/\delta_n$ in $K_n\smallsetminus K_{n-1}$ and $1$ outside of a neighborhood of it.  Note that the Lebesgue number for $d_n$ of the cover $\mathcal U$ for the compact $K_n$ is now as close as we want to $\delta_0$ (with the closeness depending on our choice of bump function).
We call $d= d_{\infty}$ the metric obtained by running this process on all $n$.

By construction, the Lebesgue number for $d$ of the cover $\mathcal U$ on the whole of $M$ is positive, and we let $\delta_0$ denote this number.  

\begin{rem}\label{rem_unbounded_diameter}
The metric $d$ we built has the property that for any point $p$ in $M$ there exists a flow box containing the ball of radius $\delta_0$ around $p$. By construction it even has the stronger property that such a flow box can be chosen amongst the ones in $\mathcal U$. But what we may have lost in the construction of $d$ is a control of the diameters of the elements of $\mathcal U$, i.e., when $M$ is noncompact, there may exists elements $U_i$ of arbitrarily large diameter for $d$. 
\end{rem}

Let $\{V_j\}$ be the cover of $\wt M = W^>_1$ consisting of good neighborhoods that are lifts of the elements $U_i$ of the cover defined above.  We denote the lifted metric on $\wt M$ by $\wt d$.  

We prove the following proposition. 

\begin{proposition} \label{prop_expansive_1}
Let $p, q \in M$ and suppose there exists a reparameterization $\tau$ with $\tau(0)=0$ such that $d(\flow^t(p), \flow^{\tau(t)}(q)) < \delta_0$ for all $t$.   Then $p$ and $q$ lie in a common flow box $U_i$ and on the same local orbit of $\flow$ in $U_i$. 
In particular, $\flow$ is expansive.
\end{proposition}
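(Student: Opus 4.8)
The plan is to prove the first assertion (that $p$ and $q$ lie in a common box $U_i$ and on the same local orbit); expansivity then follows immediately, since the lifted flow $\hflow$ on $\wt M = W_1^>$ has properly embedded orbits — each $L_x$ equals $\{x\}\times\cF_1^>(x)$, whose closure in $P\times P$ only adds the point $(x,x)\notin W_1^>$, so $L_x$ is closed, hence properly embedded, in $W_1^>$ — and the metric $d$ built above supplies the constant $\delta_0$; moreover the argument below, carried out in the universal cover, shows directly that $\wt d(\hflow^t(\wt p),\hflow^{\tau(t)}(\wt q))<\delta_0$ for all $t$ forces $\wt p,\wt q$ onto a common orbit of $\hflow$, which is exactly what Definition \ref{def_expansive} asks. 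So first I would lift everything to $\wt M = W_1^>$. Fix a lift $\wt p$ of $p$ and write $\hflow^t(\wt p)=(x_p,\phi(t))$, where (since $L_{x_p}$ is a properly embedded line and $\flow$ is nonsingular) $t\mapsto\phi(t)$ is a homeomorphism of $\bR$ onto $\cF_1^>(x_p)$, with $\phi(t)\to x_p$ in $P$ at one end of the $t$-line; lifting $t\mapsto\flow^{\tau(t)}(q)$ from the lift $\wt q$ of $q$ closest to $\wt p$ gives $\hflow^{\tau(t)}(\wt q)=(x_q,\chi(t))$ with $t\mapsto\chi(t)$ a homeomorphism of $\bR$ onto $\cF_1^>(x_q)$.

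The next step is to show $\wt d(\hflow^t(\wt p),\hflow^{\tau(t)}(\wt q))<\delta_0$ for all $t$. The set of $t\ge 0$ on which this holds throughout $[0,t]$ is open and nonempty; it is closed because at a putative finite endpoint $T$ the strict inequality $d(\flow^T(p),\flow^{\tau(T)}(q))<\delta_0$ lets one lift a short path realizing that distance and conclude that $\hflow^{\tau(T)}(\wt q)$ is \emph{the} lift of $\flow^{\tau(T)}(q)$ lying within $\delta_0$ of $\hflow^T(\wt p)$. Here one uses that two distinct lifts of a point cannot both lie within $\delta_0$ of $\hflow^T(\wt p)$; this follows from the construction of $d$ once we arrange — harmlessly, after rescaling $\delta_0$ by a factor $2$ or redoing the metric construction with $2\delta_0$ — that every $\wt d$-ball of radius $2\delta_0$ lies in a single sheet over a box of $\mathcal U$, so that every nontrivial deck transformation moves every point of $\wt M$ by at least $2\delta_0$. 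Given this, for each $t$ both points $\hflow^t(\wt p)$ and $\hflow^{\tau(t)}(\wt q)$ lie in a single good neighborhood $V_{(a_t,b_t)}=(R_{a_t}\times R_{b_t})\cap W_1^>$ of the cover $\{V_j\}$, so that $x_p,x_q\in R_{a_t}$, $\phi(t)\in\cF_1(x_p)\cap R_{b_t}$, $\chi(t)\in\cF_1(x_q)\cap R_{b_t}$, with $\overline{R_{a_t}}\cap\overline{R_{b_t}}=\emptyset$ and $R_{a_t},R_{b_t}$ sharing the same $\cF_1$-saturation.

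The goal is now $x_p=x_q$: then $\wt p,\wt q$ lie on the common orbit $L_{x_p}$, so $p,q$ lie on a common orbit of $\flow$; and since $d(p,q)<\delta_0$ there is $U_i\in\mathcal U$ with $q\in B_d(p,\delta_0)\subset U_i$, in whose sheet over $U_i$ containing $\wt p$ (which also contains $\wt q$) they lie on the same local orbit. So suppose $x_p\ne x_q$. If $\cF_1(x_p)=\cF_1(x_q)=:\ell$, relabel so that $x_q\in\cF_1^>(x_p)$ (swapping $\tau\leftrightarrow\tau^{-1}$ if necessary): then for every $t$ the disjoint arcs $\ell\cap R_{a_t}\supset[x_p,x_q]_\ell$ and $\ell\cap R_{b_t}\ni\phi(t)$ force $\phi(t)\notin[x_p,x_q]_\ell$, which combined with $\phi(t)\in\cF_1^>(x_p)$ gives $\phi(t)\in\cF_1^>(x_q)$ for all $t$ — an arc bounded away from $x_p$, contradicting $\phi(t)\to x_p$. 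If instead $\cF_1(x_p),\cF_1(x_q)$ are distinct leaves, I would build an $\cF_1$-infinite product region from the band $\{\ell_v\}_{v\in[v_p,v_q]}$ of $\cF_1$-leaves consisting of $\cF_1(x_p)$, $\cF_1(x_q)$, and the leaves separating $x_p$ from $x_q$: every such leaf meets each $R_{a_t}$, hence (by the common saturation) each $R_{b_t}$; the $\cF_2$-arc through $x_p$ in $R_{a_0}$ crosses every $\ell_v$, anchoring the band along $\cF_2(x_p)$; and for $t\in\bR$ the $\cF_2$-arc through $\phi(t)$ in $R_{b_t}$, which lies on $\cF_2(\phi(t))$ and meets $\ell_v$ at the intrinsically defined point $\cF_2(\phi(t))\cap\ell_v$, gives transversals whose crossing points with $\cF_1(x_p)$ exhaust $\cF_1^>(x_p)$ and hence run off to infinity. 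These transversals lie on pairwise distinct $\cF_2$-leaves (they meet $\cF_1(x_p)$ at distinct points), so they are pairwise disjoint and cross each $\ell_v$ once; assembling them produces a proper embedding of $[0,1]\times[0,\infty)$ into $P$ carrying the product foliation to $(\cF_1,\cF_2)$, contradicting the hypothesis of no $\cF_1$-infinite product region. Hence $x_p=x_q$, as desired.

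The step I expect to be the main obstacle is the product-region construction in the second case above: converting the transversals coming from $R_{a_0}$ and from the $R_{b_t}$ — which depend on $t$ only through the (possibly discontinuous) choice of box — into a single honestly \emph{properly embedded} product chart requires care. One must rule out a ``pinch'' of the band before the infinite end, synchronize the parametrizations of the leaves $\ell_v$ along $\cF_2$ (using the intrinsic description $\cF_2(\phi(t))\cap\ell_v$ and the continuity of $\cF_2$-holonomy), and verify properness at infinity, where the relevant inputs are that each $\cF_1$-leaf is properly embedded in $P$ and that the transversals $\cF_2(\phi(t))$ are driven to infinity together with their crossing points $\phi(t)$ on $\cF_1(x_p)$. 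By contrast, the uniform $\delta_0$-closeness step is routine once the factor-$2$ slack in the metric is in place, and the first (same-leaf) case of Step 3 is elementary.
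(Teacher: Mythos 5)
Your overall strategy is the paper's: exploit the good-neighborhood flow boxes, reduce everything to showing the first coordinates of the two orbits agree, and contradict the absence of $\cF_1$-infinite product regions. Your case decomposition is different (the paper first shows the two base points lie on a common $\cF_2$-leaf by an elementary disjointness-of-rectangles argument, then builds the product region only in that configuration; you instead handle ``same $\cF_1$-leaf'' by an elementary argument and fold everything else into one product-region construction), and your same-$\cF_1$-leaf case and the careful lifting step are fine. But there is a genuine gap exactly at the point you flag as ``the main obstacle'': you take as an ``input'' that the transversal arcs $\cF_2(\phi(t))$ are driven to infinity together with their endpoints $\phi(t)$ on $\cF_1(x_p)$. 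That is not an input; it is the crux, and it does not follow from proper embeddedness of leaves or from plane topology. A priori the leaves $\cF_2(\phi(t))$ could converge, as $\phi(t)$ escapes along $\cF_1(x_p)$, to a leaf making a perfect fit with $\cF_1(x_p)$, so that their crossing points with an interior leaf $\ell_v$ of your band converge to a finite point $w$ and the band ``pinches'' before reaching infinity; this is precisely the configuration occurring near lozenges in orbit spaces, so nothing local or purely topological rules it out. Without excluding it, your map $[0,1]\times[0,\infty)\to P$ is a continuous injection but not proper, and no infinite product region is produced.

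The paper closes this gap with a covering-space argument that your proposal never invokes: if an interior crossing point $v_t$ stayed in a compact set of $P$, then a corresponding point of the box $V_t$ would stay in a compact set of $W^>_1$; local finiteness of $\mathcal U$ in $M$ plus proper discontinuity of the $G$-action then allow only finitely many boxes $V_t$, forcing $\phi(t)$ to remain bounded — a contradiction. (The paper even footnotes that this step would be automatic if the flow boxes had uniformly bounded diameter, which the constructed metric does not guarantee in the noncompact case.) To repair your proof you must insert this argument — or some substitute using the quotient $M$ and the local finiteness of the cover — for every leaf $\ell_v$ of your band, not just for the boundary leaf $\cF_1(x_p)$.
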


\begin{proof}[Proof of Proposition \ref{prop_expansive_1}]
Suppose $p, q$ and $\tau$ are as above and choose lifts $\wt p, \wt q$ such that 
\[ \wt d(\hflow^t(\wt p), \hflow^{\tau(t)}(\wt q)) < \delta \text{ for all } t. \]
Then for all $t$, there exists a flow box $V_t$ such that $\{\hflow^t(\wt p), \hflow^{\tau(t)}(\wt q)\} \subset V_t$.  Recall $V_t$ has the form $V_t = (R_t \times Q_t) \cap W^>_1$ where $R_t, Q_t$ are rectangles in $P$. 

Using the structure of $W^>_1$, we can write $\hflow^t(\wt p) = (x, y_t) \in W^>_1$, and  $\hflow^{\tau(t)}(\wt q) = (z, w_t)$.  
In any (bi)-foliated plane, all leaves are necessarily properly embedded.  As a consequence, 
up to reversing the direction of the flow, we assume that $y_t$ leaves all compact sets of $P$ as $t \to \infty$. 

First we show that $x$ and $z$ are on the same $\cF_2$-leaf: Suppose for a contradiction that $z\notin\cF_2(x)$. Let $u = \cF_1(x)\cap \cF_2(z)$, and up to switching the roles of $x$ and $z$, we can assume without loss of generality that $u\in \cF_1(x)^>$. Then, for $t$ sufficiently negative, we can assume that $y_t$ is between $x$ and $u$. But, for such a choice of $t$, the rectangle $Q_t$ which contains both $y_t$ and $w_t$ will have to also contain $z$, so in particular would intersect $R_t$, contradicting the definition of good neighborhood. See Figure \ref{fig_contradiction}.
 \begin{figure}[h]
   \labellist 
  \small\hair 2pt
     \pinlabel $x$ at 43 10
     \pinlabel $y_t$ at 95 10
          \pinlabel $z$ at 170 55
     \pinlabel $w_t$ at 225 55
 \endlabellist
     \centerline{ \mbox{
\includegraphics[width=9cm]{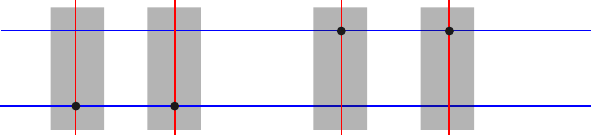}}}
\caption{If $y_t$ is close to $x$, then $(x, y_t)$ cannot share a good neighborhood with $(z, w_t)$}
 \label{fig_contradiction} 
\end{figure}

Thus, we have that $z\in \cF_2(x)$.
Next, we want to deduce that $z=x$. Suppose for a contradiction that $x\neq z$. In particular $\cF_1(z)\neq \cF_1(x)$. Now pick any point $u$ in the $\cF_2$-segment between $x$ and $z$.
By construction, the orbits $\flow^t(p)$ and $\flow^{\tau(t)}(q)$ in $M$ corresponding to the projections of $(x,y_t)$ and $(z,w_t)$ are in the same flow box, and one the same $\cF_2$-leaf. Moreover, they are on the same \emph{local} leaf of the projection of $\cF_2$: Letting $u$ vary from $x$ to $z$ on $\cF_2(x)$ and choosing $v_t\in \cF_1(u)\cap Q_t$ gives a continuous path of orbits from $\flow^t(p)$ to $\flow^{\tau(t)}(q)$ staying in that same flow box and on that same $\cF_2$-leaf.

Let $\pi\colon W^>_1 \to M$ be the projection. 
We show that $v_t$ escapes compact sets:\footnote{If we knew that the flow boxes had uniformly bounded diameter, then this would be automatic, but by Remark \ref{rem_unbounded_diameter}, this may fail for our choice of metric.}
Otherwise $\pi((u,v_t))$ stays in a compact set of $M$, and $\pi(V_t)$ 
intersects a compact set, therefore there are finitely many possibilities for $\pi(V_t)\in \mathcal U$, as $\mathcal U$ is locally finite. Hence there are finitely many possibilities for $V_t$ also. This contradicts that $y_t$ escapes compact sets.

Since $y_t$, $w_t$ and $v_t$ must all escape compact sets in $P$, and that $u$ was chosen arbitrarily between $x$ and $z$, we deduce that the region bounded by $\cF_1(x)$, $\cF_1(z)$ and the $\cF_2$-segment between $x$ and $z$ is an infinite product region. See Figure \ref{fig_product_region_contradiction}. This contradicts our assumption.  

 \begin{figure}[h]
   \labellist 
  \small\hair 2pt
     \pinlabel $x$ at 45 10
     \pinlabel $y_t$ at 180 10
     \pinlabel $u$ at 45 40
          \pinlabel $v_t$ at 180 40
          \pinlabel $z$ at 45 65
     \pinlabel $w_t$ at 180 65
 \endlabellist
     \centerline{ \mbox{
\includegraphics[width=8cm]{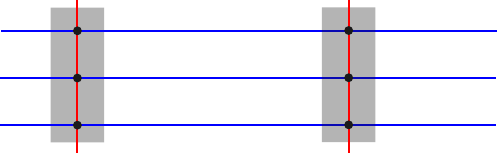}}}
\caption{If $x$ and $z$ are on the same $\cF_2$-leaf, their orbits cannot stay forever in the same flow box.}
 \label{fig_product_region_contradiction} 
\end{figure}

Therefore we deduce that $x=z$, which shows that $p, q$ lie on the same local orbit of the flow. Since orbits of $\wt \flow$ are properly embedded by construction, we deduce that $\flow$ satisfies Definition \ref{def_expansive}.
\end{proof}

With this we can easily finish the proof of Theorems \ref{thm_compact} and \ref{thm_noncompact}.   
By construction, since $\wt M = W^>_1$ and the foliation by orbits is the constant first-coordinate foliation, $P$ is the space of orbits of $\hflow$ in $\wt M$.  The one-dimensional foliations $\cF_1$ and $\cF_2$ on $P$, and thus their product with $\bR$ giving two-dimensional foliations on $W^>_1$, are invariant under the action of $G = \pi_1(M)$, and so give invariant foliations for $\flow$, with leaves formed by unions of orbits. Moreover, in the proof above, we saw that two orbits $(x,y_t)$ and $(z,w_t)$ can be parametrized to stay in common good neighborhoods in the future if and only if $x$ and $z$ are in the same $\cF_1$-leaf, and conversely, they stay close in the past if and only if they are in the same $\cF_2$-leaf. Thus, $\cF_1$ must project to the stable foliation of the flow $\flow$ and $\cF_2$ must project to the unstable foliation.  

Therefore, if $M=W^>_1/G$ is not compact, we have obtained the conclusion of Theorem \ref{thm_noncompact}. When $M$ is compact, the conclusion of Theorem \ref{thm_compact} follows by the work of Inaba and Matsumoto \cite{IM90} and Paternain \cite{Pat93}: Indeed, they showed that any expansive flow without fixed points (as is our case here) on a compact 3-manifold are pseudo-Anosov. 
\qed

\begin{rem}\label{rem_expansive_vs_pA_noncompact}
In the case of a noncompact manifold, the flow $\flow$ we obtain is expansive and leaves invariant two transverse foliations saturated by orbits. The one condition in the definition of a topological Anosov flow that $\flow$ may not necessarily enjoy is that the distance between two orbits that escape all compacts on the same $\cF_1$-leaf actually goes to $0$ in the future. It seems reasonable to expect that one could further modify the metric $d$ to satisfy this additional condition, but we did not try to verify this.
\end{rem}

\section{The closing property and proper discontinuity}  \label{sec_proper_discontinuity}

In this section, we will show Theorems \ref{thm_closing_implies_prop_disc} and \ref{thm_closing_plus_transitive_implies_prop_disc}.  

As an easy warm-up, we show that all points of $W^>_1$ are wandering (Lemma \ref{lem_wandering_2}).  This property is often mistakenly confused with the (stronger) property of the action being properly discontinuous. See \cite{Kapovich} for a detailed discussion.   These lemmas will also be useful for the proof of proper discontinuity. 

For convenience we introduce the following terminology.  
\begin{definition} A {\em closing pair} is a pair of open sets $R, U$ of $P$, with $R \subset U$ of $P$, such that if $gR \cap R \neq \emptyset$ then $g$ has a fixed point in $U$.  
For a point $(x, x') \in W^>_1$, a {\em good closing pair} for $(x,x')$ is a pair of good neighborhoods $(R \times R') \cap W^>_1$ and $(U \times U') \cap W^>_1$ of $(x,x')$, such that $R, U$ and $R', U'$ are both closing pairs.  
\end{definition} 

We also need the following elementary observation. 
\begin{observation} \label{obs_empty_intersection}
If $x \not = y \in P$ are hyperbolic fixed points of $g$, then $\cF_1(x) \cap \cF_2(y) = \emptyset$.  \footnote{Recall our convention is that a fixed point $x$ is hyperbolic if (up to switching $g$ with $g^{-1}$), $g$ is topologically expanding on $\cF_1(x)$ and topologically contracting on $\cF_2(x)$.  }

\end{observation}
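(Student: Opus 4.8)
\emph{Proof plan.} The plan is to argue by contradiction: suppose there is a point $z \in \cF_1(x) \cap \cF_2(y)$ with $x \neq y$ both hyperbolic fixed points of the same $g$, and then follow the forward and backward $g$-orbit of $z$. The key observation is that since $g(x)=x$ and $g(y)=y$, the leaves $\cF_1(x)$ and $\cF_2(y)$ are both $g$-invariant, and each is a properly embedded line in $P$; hence the orbit $(g^n(z))_n$ lies in $\cF_1(x)$ for all $n$ and in $\cF_2(y)$ for all $n$, and we can exploit the incompatible asymptotic behaviour forced by hyperbolicity on these two leaves.

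First I would normalize using the freedom in the definition of hyperbolic fixed point: after replacing $g$ by $g^{-1}$ if necessary, assume $g$ acts as a topological expansion on $\cF_1(x)$ and a topological contraction on $\cF_2(x)$. In particular $x$ is the \emph{unique} fixed point of $g$ on each of $\cF_1(x)$ and $\cF_2(x)$, with $g^n(p)\to x$ for every $p\in\cF_2(x)$ and, since leaves are properly embedded, $g^n(p)$ escaping every compact subset of $P$ for every $p\in\cF_1(x)\smallsetminus\{x\}$. With this choice fixed, hyperbolicity of $y$ leaves exactly two cases: \emph{(a)} $g$ expands $\cF_1(y)$ and contracts $\cF_2(y)$, or \emph{(b)} $g$ contracts $\cF_1(y)$ and expands $\cF_2(y)$.

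In case (a): if $z\neq x$, then $z\in\cF_1(x)\smallsetminus\{x\}$ forces $g^n(z)$ to escape every compact set of $P$, while $z\in\cF_2(y)$ forces $g^n(z)\to y$ --- impossible. Hence $z=x$, so $x\in\cF_2(y)$ and $\cF_2(x)=\cF_2(y)$; but then $y$ is a fixed point of $g|_{\cF_2(x)}$ distinct from $x$, contradicting that this contraction has $x$ as its unique fixed point. In case (b): if $z\notin\{x,y\}$, then in \emph{backward} time $g^{-n}(z)\to x$ (since $z\in\cF_1(x)\smallsetminus\{x\}$ and $g^{-1}$ contracts $\cF_1(x)$) and also $g^{-n}(z)\to y$ (since $z\in\cF_2(y)\smallsetminus\{y\}$ and $g^{-1}$ contracts $\cF_2(y)$), contradicting that $P$ is Hausdorff and $x\neq y$. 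The two remaining boundary cases are immediate: $z=x$ gives $\cF_2(x)=\cF_2(y)$, which $g$ would have to both contract (at $x$) and expand (at $y$); and $z=y$ gives $\cF_1(x)=\cF_1(y)$, which $g$ would have to both expand (at $x$) and contract (at $y$). All cases are contradictory, so $\cF_1(x)\cap\cF_2(y)=\emptyset$.

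The argument is elementary; the only point requiring care is the precise reading of ``topological contraction/expansion on a leaf'', namely that it entails a unique fixed point on the whole leaf together with global convergence (resp.\ escape to infinity), which is what makes the two alternatives ``$g^n(z)$ escapes every compact set'' and ``$g^n(z)$ converges to the fixed point'' genuinely mutually exclusive. Granting that convention, the main (minor) obstacle is simply the bookkeeping of the few degenerate sub-cases in which $z$ coincides with $x$ or with $y$, handled above.
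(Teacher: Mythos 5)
Your proof is correct, but it takes a more laborious route than the paper's. The paper's argument is essentially two lines: two topologically transverse leaves in a bifoliated plane meet in at most one point, so $\cF_1(x)\cap\cF_2(y)=\{z\}$ is a $g$-invariant singleton (both leaves being $g$-invariant), hence $z$ is fixed by $g$ (or by the power of $g$ preserving all rays, in the prong case); this puts two fixed points on a single leaf, contradicting hyperbolicity, with no case analysis needed. Your argument never records this, and in fact the ``orbit'' $g^n(z)$ you track is constant, equal to $z$, for exactly this reason -- so your dichotomies ``$g^n(z)$ escapes every compact set'' versus ``$g^n(z)\to y$'' are really a disguised way of saying that $z$ is a second fixed point on an expanded or contracted leaf. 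That said, each of your four sub-cases does close correctly (the degenerate cases $z=x$ and $z=y$ reduce to two fixed points on one leaf, and the generic cases give incompatible asymptotics), and your reading of ``topological contraction/expansion'' as entailing a unique fixed point with global convergence, respectively escape along the properly embedded leaf, matches the paper's convention. So: no gap, but you missed the one observation (invariance of the singleton intersection) that collapses the whole case analysis.
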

\begin{proof}
If  $\cF_1(x) \cap \cF_2(y) \neq \emptyset$, then this intersection is a single point, say $z$, which is distinct from both $x$ and $y$; and $z$ is fixed by the power of $g$ that preserves all the rays of $\cF_1(x)$ and $\cF_2(y) $.  Thus, some nontrivial power of $g$ fixes multiple points on the same leaf, contradicting hyperbolicity.   
\end{proof}

\begin{lemma}[Points are wandering] \label{lem_wandering_1}
 Let $(p,p') \in W^>_1$, and let $(R \times R') \cap W^>_1$ and $(U \times U') \cap W^>_1$ be a good closing pair for $(p,p')$.   If $g R \cap R \neq \emptyset$ and $g R' \cap R' \not = \emptyset$ then $g = \id$.  
\end{lemma}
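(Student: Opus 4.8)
The plan is to turn the two overlap hypotheses into two fixed points of $g$, observe that these must be distinct because good neighborhoods have disjoint closures, and then play the $\cF_1$-leaf through the first fixed point against the $\cF_2$-leaf through the second to contradict Observation~\ref{obs_empty_intersection} unless $g=\id$.

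Concretely, I would first invoke the two closing-pair hypotheses packaged into the good closing pair. Since $(R,U)$ is a closing pair and $gR\cap R\neq\emptyset$, $g$ has a fixed point $a\in U$; since $(R',U')$ is a closing pair and $gR'\cap R'\neq\emptyset$, $g$ has a fixed point $b\in U'$. Because $(U\times U')\cap W^>_1$ is a good neighborhood, condition~(iii) of Definition~\ref{def_good_neighborhood} gives $\overline U\cap\overline{U'}=\emptyset$, so $a\neq b$. Now suppose for contradiction that $g\neq\id$. Then $a$ and $b$ are two distinct fixed points of $g$, hence both hyperbolic by the hyperbolic-fixed-point hypothesis (Definition~\ref{def_uniform_hyperbolicity}), so Observation~\ref{obs_empty_intersection} yields $\cF_1(a)\cap\cF_2(b)=\emptyset$.

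Next I would contradict this using the remaining good-neighborhood structure. By condition~(ii), the $\cF_1$-saturations of $U$ and $U'$ agree, and since $a\in U$ this forces $\cF_1(a)\cap U'\neq\emptyset$; choose a point $c$ in this intersection. By condition~(i), $\overline{U'}$ is homeomorphic, respecting both foliations, to $[0,1]^2$ with its product bifoliation, so the $\cF_1$-plaque through $c$ and the $\cF_2$-plaque through $b$ inside $U'$ meet in a single point $z$. Then $z\in\cF_1(c)\cap\cF_2(b)=\cF_1(a)\cap\cF_2(b)$, contradicting the previous paragraph. Hence $g=\id$.

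The step requiring the most care is this last one: extracting a common point $z$ of $\cF_1(a)$ and $\cF_2(b)$ from the mere fact that $b$ and a point of $\cF_1(a)$ both lie in the rectangle $U'$. This is exactly where the definition of a good neighborhood is used at full strength — that $\overline{U'}$ is bifoliated-homeomorphic to the product-foliated square, so a ``horizontal'' plaque and a ``vertical'' plaque always intersect. Everything else is essentially formal: the fixed points are handed to us by the closing pairs, their distinctness comes from the disjointness of $\overline U$ and $\overline{U'}$, and the contradiction is Observation~\ref{obs_empty_intersection}.
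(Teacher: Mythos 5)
Your proof is correct and is essentially the paper's argument: closing pairs produce fixed points in $U$ and $U'$, the good-neighborhood conditions force $\cF_1$ of the first to meet $\cF_2$ of the second, and Observation~\ref{obs_empty_intersection} gives the contradiction. You simply spell out the intersection step (via the matching $\cF_1$-saturations and the product structure of $U'$) that the paper leaves implicit.
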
 

\begin{proof}
From the closing property, if $g R \cap R \neq \emptyset$ and $g R' \cap R' \not = \emptyset$ then $g$ has fixed points $z \in U$ and $z' \in U'$.  By definition of good neighborhood we have $\cF_1(z) \cap \cF_2(z') \neq \emptyset$, so Observation \ref{obs_empty_intersection} implies $g = \id$.  
\end{proof} 

To rephrase this in the standard language for ``wandering", given $(p,p')$ as above, take $Z=(R \times R') \cap W^>_1$ to be the neighborhood of $(p,p')$ from Lemma \ref{lem_wandering_1}.  Then if $g \neq id$, we have $g(Z) \cap Z = \emptyset$, so $(p,p')$ is a wandering point.

\begin{remark}
Note that the above proof that the action is wandering did not use any assumptions other than that the fixed points are hyperbolic and the closing property.  These assumptions are satisfied by many actions.
\end{remark} 

Rephrasing the wandering condition gives the following useful lemma: 
\begin{lemma} \label{lem_wandering_2}
Assume $G < \Aut(P)$ has the closing property.  If for some $(p,p')$ and $(q, q') \in W^>_1$ 
there exist $g_n$ such that $g_n(p) \to q$ and $g_n(p') \to q'$, then the sequence $g_n$ is eventually constant. 
\end{lemma}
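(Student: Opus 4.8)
\textbf{Proof plan for Lemma \ref{lem_wandering_2}.}

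The plan is to reduce the statement to an application of Lemma \ref{lem_wandering_1}. First I would fix a good closing pair $(R\times R')\cap W_1^>$ and $(U\times U')\cap W_1^>$ for the point $(q,q')$; such a pair exists because we can first choose good neighborhoods of $(q,q')$ shrinking enough so that $R\subset U$, $R'\subset U'$, and each of the two pairs $R\subset U$, $R'\subset U'$ is a closing pair (using the closing property of $G$ applied at $q$ and at $q'$, together with the fact that good neighborhoods form a neighborhood basis). Since $g_n(p)\to q$ and $g_n(p')\to q'$, for all large $n$ we have $g_n(p)\in R$ and $g_n(p')\in R'$.

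Next, for any two such indices $m,n$ (both large), consider $g:=g_m g_n^{-1}$. Then $g\big(g_n(p)\big)=g_m(p)\in R$, so $g_m(p)\in gR\cap R$, giving $gR\cap R\neq\emptyset$; similarly $gR'\cap R'\neq\emptyset$. By Lemma \ref{lem_wandering_1}, $g=\id$, i.e.\ $g_m=g_n$. Hence the sequence $(g_n)$ is eventually constant, which is the desired conclusion.

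The only subtlety — and the step I would be most careful about — is the first one: verifying that a good closing pair actually exists for $(q,q')$. This requires that we can simultaneously arrange (a) the nesting and disjointness conditions of a good neighborhood (Definition \ref{def_good_neighborhood}) for both the inner pair and the outer pair, and (b) that the $P$-projections $R\subset U$ and $R'\subset U'$ are each closing pairs in the sense of the preceding definition. Part (b) is exactly what the closing property for $G$ provides: given any neighborhood $U$ of $q$ there is a smaller $R$ so that $gR\cap R\neq\emptyset$ forces a fixed point of $g$ in $U$, and likewise at $q'$; shrinking further if necessary preserves the good-neighborhood conditions since those are open conditions satisfied by all sufficiently small rectangles around $q$ and $q'$ (here we use that $q\neq q'$ automatically, as $(q,q')\in W_1^>$ forces $q'\in\cF_1^>(q)$). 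Once this is in place, everything else is a direct quotation of Lemma \ref{lem_wandering_1}.
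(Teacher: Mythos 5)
Your proof is correct and is essentially identical to the paper's argument: both reduce to Lemma \ref{lem_wandering_1} by observing that $g_m g_n^{-1}$ returns $R$ to $R$ and $R'$ to $R'$ for large $m,n$. Your extra care in justifying the existence of a good closing pair at $(q,q')$ is a reasonable elaboration of a step the paper leaves implicit.
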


\begin{proof}
 Take a good neighborhood $(R \cup R') \cap W^>_1$ of $(q, q')$ in $W^>_1$ as in  Lemma \ref{lem_wandering_1}.  For sufficiently large $n, m$ we have $g_n g_m^{-1}(R) \cap R \neq \emptyset$: for $n, m$ large, then $g_n(p), g_m(p)
\in R$, so 

$$g_n g_m^{-1}(g_m(p)) \ = \ g_n(p) \ \ {\rm is \ in} \ \ 
R \cap g_n g^{-1}_m(R),$$

\noindent
and in the same way
$g_n g_m^{-1}(R') \cap R' \neq \emptyset$.  By Lemma \ref{lem_wandering_1}, we conclude that $g_n = g_m$.  
\end{proof} 

Our main goal is to prove the following. 
\begin{proposition}[Closing property plus uniformly hyperbolic gives proper discontinuity]\label{prop_proper_discontinuity}
Suppose $G$ acts on $P$ with the closing property and
uniformly hyperbolic fixed points, and suppose there exists a convergent sequence 
 $(x_n, x'_n) \to (x, x')$ in $W^>_1$, and $g_n \in G$ such that $g_n(x_n, x'_n) \to (y, y') \in W^>_1$.  Then the sequence $g_n$ is eventually constant. 
\end{proposition}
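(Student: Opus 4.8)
The goal is to upgrade the "wandering" statement of Lemma \ref{lem_wandering_2} to genuine proper discontinuity, using uniform hyperbolicity to rule out the remaining bad behavior. The plan is to argue by contradiction: suppose $g_n$ is not eventually constant, so after passing to a subsequence the $g_n$ are pairwise distinct. Fix a good closing pair $(R \times R') \cap W^>_1$ and $(U \times U') \cap W^>_1$ for $(y, y')$, shrinking so that $\overline{R} \subset U$, $\overline{R'} \subset U'$, and both $U, R$ are contained in a compact rectangle. For $n$ large, $g_n(x_n) \in R$ and $g_n(x'_n) \in R'$. Applying Lemma \ref{lem_wandering_2} already tells us we cannot have $g_n g_m^{-1}$ move both a point near $y$ back near $y$ and a point near $y'$ back near $y'$ unless $g_n = g_m$; so the obstruction is that the relative position of $g_n(x)$ and $g_n(x')$ inside the good neighborhood may drift — the images get "sheared" so that while $g_n(x_n)$ and $g_n(x'_n)$ converge, the $g_n$ themselves escape. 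The key point is that such drift forces the rectangles $\cF_i(g_n R)$ to nest, and uniform hyperbolicity then says their intersection is a single leaf, which will pin down the limit and produce a contradiction with distinctness.

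\textbf{Key steps.} First I would use the closing property to extract fixed points: for $n \neq m$ large, $g_n g_m^{-1}$ maps $g_m(x_n)$ (near $y$) to $g_n(x_n)$ (near $y$) so $g_n g_m^{-1} R \cap R \neq \emptyset$, hence $g_n g_m^{-1}$ has a fixed point $z_{n,m}$ in $U$; similarly a fixed point $z'_{n,m}$ in $U'$. By hyperbolicity of fixed points (Definition \ref{def_uniform_hyperbolicity}), $g_n g_m^{-1}$ (or its inverse) contracts $\cF_1(z_{n,m})$ and expands $\cF_2(z_{n,m})$; and by Observation \ref{obs_empty_intersection}, since $g_n \neq g_m$, we cannot have $z_{n,m} = z'_{n,m}$ nor even $\cF_1(z_{n,m}) \cap \cF_2(z'_{n,m}) \neq \emptyset$ — but $z_{n,m} \in U$, $z'_{n,m} \in U'$, and by the good neighborhood condition the $\cF_1$-saturations of $U$ and $U'$ agree, so $\cF_1(z_{n,m})$ does meet $\cF_2(z'_{n,m})$, forcing $z_{n,m} = z'_{n,m}$ and hence $g_n g_m^{-1} = \id$. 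This already gives the result provided the fixed points genuinely lie in the good neighborhood, which is where one must be careful: the fixed point guaranteed by the closing property lies in $U$ (resp. $U'$), and the good neighborhood forces $\cF_1(U) = \cF_1(U')$, so we really are in the situation of Observation \ref{obs_empty_intersection}. The role of uniform (as opposed to plain) hyperbolicity enters if the fixed points might fail to exist or if the "good closing pair" cannot be chosen small enough to separate $U$ from $U'$ while keeping the closing property — one arranges this at the outset using the closing property at $y$ and $y'$ separately, so that $R, U$ and $R', U'$ are genuine closing pairs with disjoint closures, which is exactly the definition of a good closing pair.

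\textbf{Main obstacle.} The subtle point — and the reason the proposition is stated with \emph{uniformly} hyperbolic fixed points rather than just the closing property plus plain hyperbolicity — is the following scenario, which Lemma \ref{lem_wandering_2} does not a priori exclude: the $g_n$ are distinct but $g_n g_m^{-1}$, while having a fixed point in $U$, has that fixed point extremely close to the boundary, so that iterating the argument doesn't immediately collapse everything; more precisely, one needs that the fixed points $z_{n,m}$ do not escape to a situation where $\cF_1(z_{n,m})$ and $\cF_2(z'_{n,m})$ just barely fail to meet. Ruling this out requires controlling the nested rectangles $\cF_i(g_n R)$: one shows that after a further subsequence, $\cF_1(g_n \mathring R) \supset \cF_1(g_{n+1}\overline R)$ (the images of the $\cF_1$-extent genuinely nest because the relative $\cF_1$-coordinates of $g_n(x_n)$ converge), so uniform hyperbolicity forces $\bigcap_n \cF_1(g_n R)$ to be a single leaf — but then $x$ and the limiting $\cF_1$-leaf are forced to be related in a way incompatible with $(x,x')$ also converging under $g_n$ to $(y,y')$ with $y' \in \cF_1^>(y)$ nontrivially separated from $y$. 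I expect this nesting-plus-single-leaf argument to be the technical heart; the fixed-point extraction via the closing property and the final appeal to Observation \ref{obs_empty_intersection} are comparatively routine once the good closing pair is set up correctly.
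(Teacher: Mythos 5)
Your central step fails. You claim that for $n\neq m$ large, $g_ng_m^{-1}$ maps $g_m(x_n)$, ``near $y$,'' to $g_n(x_n)$, so that $g_ng_m^{-1}R\cap R\neq\emptyset$ and likewise for $R'$. But there is no reason $g_m(x_n)$ is near $y$ when $n\neq m$: the hypothesis only gives $g_m(x_m)\to y$, and since the $g_m$ are not equicontinuous you have no control over $g_m$ applied to other points of the sequence (nor even over $g_m(x)$). This is precisely the gap between the wandering property of Lemma \ref{lem_wandering_2} --- where the base point $(p,p')$ is \emph{fixed}, so $g_ng_m^{-1}(g_m(p))=g_n(p)$ lands back in $R$ --- and proper discontinuity, a distinction the paper explicitly warns about. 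If your step were valid, Lemma \ref{lem_wandering_1} would immediately force $g_n=g_m$ and the proposition would follow with no use of uniform hyperbolicity at all; the fact that the statement genuinely needs the uniform hypothesis is a signal that both intersections cannot be arranged to be nonempty. Indeed, the paper's proof starts from the opposite conclusion: taking a good closing pair at $(x,x')$, Lemma \ref{lem_wandering_1} shows that (after relabeling) $g_nR_x\cap g_mR_x=\emptyset$ for all $n\neq m$, and the entire difficulty is extracting a contradiction from that \emph{disjointness}.

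Your ``main obstacle'' paragraph gestures at the right ingredients (nesting of $\cF_i(g_nR)$ plus uniform hyperbolicity collapsing the intersection to a single leaf) but does not supply the argument. The assertion that the images nest ``because the relative $\cF_1$-coordinates of $g_n(x_n)$ converge'' is unsupported; in the paper the nesting is a consequence of the Markovian intersection pattern at hyperbolic fixed points of $g_m^{-1}g_n$ inside a common compact rectangle (Lemma \ref{lem_fixed_points_shrink}), and producing such fixed points in a common rectangle is itself nontrivial: it requires the dichotomy of Lemma \ref{lem_disjoint_or_not} applied to $g_m^{-1}g_nR'_x\cap R'_x$, a symmetrization swapping the roles of $(x,x')$ and $(y,y')$ via $h_n=g_n^{-1}$ to reduce to the case where $\cF_1(g_nR_x)$ degenerates, and then the observation that the connecting segment $J\subset\cF_1(x)$ has $g_n(J)$ collapsing to a point $z$, where the closing property is applied one final time. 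None of this is recoverable from your outline as written, so the proposal has a genuine gap rather than being an alternative correct route.
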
 

For this we need one more preliminary result. 

\begin{lemma} \label{lem_fixed_points_shrink}
Suppose that $G$ has uniformly hyperbolic fixed points, $(x_n, x'_n) \to (x, x')$ in $W^>_1$, and  $g_n(x_n, x'_n) \to (y, y') \in W^>_1$.  
Let $V$ be a compact rectangle in $P$ containing $x$ and $x'$ in its interior. Assume that for all $m<n$, $g_m^{-1}g_n \neq \id$   and has a fixed point in $V$. 
Then
$\cF_1(g_nV)$ converges to the leaf $\cF_1(y)$.
\end{lemma}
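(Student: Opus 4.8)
The plan is to extract from the hypotheses a nested sequence of rectangles to which the uniform hyperbolicity axiom applies. First I would set up the picture: write $x_n = (\text{first coord})$ and $x_n' = (\text{second coord})$, with $x_n' \in \cF_1^>(x_n)$, and similarly $y' \in \cF_1^>(y)$. Since $(x_n,x_n')\to(x,x')$, for $n$ large both $x_n$ and $x_n'$ lie in the interior of $V$, and since $g_n(x_n,x_n')\to(y,y')$, the points $g_n x_n$ and $g_n x_n'$ eventually lie in a fixed compact rectangle, say $V'$, containing $y$ and $y'$ in its interior; this already shows $\cF_1(g_n V)$ eventually meets $V'$ near $\cF_1(y)$. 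The heart of the matter is to upgrade "meets near $\cF_1(y)$" to "converges to $\cF_1(y)$", and for this I will feed a subsequence into Definition \ref{def_uniform_hyperbolicity}.

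The key step: pass to a subsequence and relabel so that $g_m^{-1}g_n$ has a fixed point in $V$ for all $m < n$ (this is exactly the standing assumption of the lemma), and so that the leaves $\cF_1(g_n V)$ are "monotone" in the transverse direction — i.e. after passing to a further subsequence, the sequence of $\cF_1$-leaves through $g_n V$ converges to some leaf $\ell$ (possibly $=\cF_1(y)$, which is what we want) in the leaf space, and we may arrange the nesting $\cF_1(g_n \mathring{V}) \supset \cF_1(g_{n+1}\overline{V})$. To see this nesting is attainable: $g_n$ and $g_{n+1}$ both nearly fix the rectangle-region near $x, x'$; more precisely, since $g_m^{-1}g_n$ fixes a point of $V$ and is hyperbolic, by Observation \ref{obs_empty_intersection} and the hyperbolic dynamics the relevant $\cF_1$-leaves $\cF_1(g_n V)$ for varying $n$ are pairwise linearly ordered in the transverse direction away from each other (two distinct fixed points of the hyperbolic map $g_m^{-1}g_n$ cannot lie on a common $\cF_1$-leaf), which lets us extract a monotone sub-subsequence whose leaves are strictly nested as required. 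Then Definition \ref{def_uniform_hyperbolicity} applies with $g_n$ replaced by this subsequence and $R = V$: it gives $\bigcap_n \cF_1(g_n V) = \{\ell\}$ is a single leaf. But $g_n x_n \in g_n V$ and $g_n x_n \to y$, so $y$ lies in (the closure of) $\ell$, hence $\ell = \cF_1(y)$. Since every subsequence of $\cF_1(g_n V)$ has a further subsequence converging to $\cF_1(y)$, the whole sequence converges to $\cF_1(y)$.

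The main obstacle I expect is the bookkeeping to legitimately invoke the uniform hyperbolicity axiom: that axiom is stated for a single fixed compact rectangle $R$ and a sequence $g_n$ with the \emph{nested} condition $\cF_i(g_n\mathring R) \supset \cF_i(g_{n+1}\overline R)$, whereas a priori the leaves $\cF_1(g_n V)$ need not be nested — they converge but could oscillate, or two of them could coincide. Handling this requires (a) verifying that distinct $g_n V$ give distinct $\cF_1$-leaves, which follows from hyperbolicity of $g_m^{-1}g_n$ via Observation \ref{obs_empty_intersection}, and (b) a subsequence argument to get genuine strict nesting, possibly after slightly enlarging or shrinking $V$ to a rectangle $V_0$ with $x,x'$ still interior, so that $\mathring V_0$ still contains the relevant fixed points while $\overline{V_0}$ stays inside a region where consecutive leaves are separated. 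A secondary subtlety is ruling out the degenerate case $g_m^{-1}g_n = \id$ for infinitely many pairs — but the lemma hypothesizes $g_m^{-1}g_n \neq \id$ for all $m<n$, so this is assumed away. Once the nesting is arranged, the conclusion is immediate from Definition \ref{def_uniform_hyperbolicity} together with the convergence $g_n x_n \to y$.
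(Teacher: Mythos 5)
Your overall strategy matches the paper's: use the fixed point of $g_m^{-1}g_n$ in $V$ to get pairwise comparability of the saturations, extract a monotone subsequence, feed it into Definition \ref{def_uniform_hyperbolicity}, and identify the limit leaf as $\cF_1(y)$ via $g_nx_n\to y$. However, there is a genuine gap at the key step. Because $g_m^{-1}g_n$ has a hyperbolic fixed point in $V$, for each pair $m<n$ exactly one of two ``Markovian'' configurations holds: either $\cF_1(g_mV)\supset\cF_1(g_nV)$ (and $\cF_2(g_mV)\subset\cF_2(g_nV)$), or the reverse. A monotone subsequence extracted from a pairwise linearly ordered family can therefore be \emph{increasing} in $\cF_1$ just as well as decreasing, and your proposal simply asserts that the decreasing nesting ``is attainable'' without ruling out the increasing alternative. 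If $\cF_1(g_nV)$ is strictly increasing along a subsequence, it contains a fixed nondegenerate set of leaves forever, the uniform hyperbolicity axiom cannot be applied to $\cF_1$, and the conclusion of the lemma would in fact fail for that subsequence --- so this case must be excluded, not arranged away.

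The paper excludes it with a separate claim that your proposal is missing: if the subsequence were increasing in $\cF_1$, then $\cF_2(g_nV)$ would be strictly decreasing, and applying Definition \ref{def_uniform_hyperbolicity} to $\cF_2$ forces $\cF_2(g_nV)$ to shrink to a single $\cF_2$-leaf. This contradicts the fact that $g_nV$ contains both $g_nx_n\to y$ and $g_nx_n'\to y'$, where $y'\in\cF_1^>(y)$ implies $y'\notin\cF_2(y)$, so the $\cF_2$-saturations must always span the two distinct leaves $\cF_2(y)$ and $\cF_2(y')$. Note that your write-up never uses the hypothesis that $(y,y')\in W^>_1$ (i.e.\ that $y$ and $y'$ are distinct points on one $\cF_1$-leaf, hence on distinct $\cF_2$-leaves) in any essential way; that hypothesis is precisely what kills the increasing case. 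Once you add this claim, the rest of your argument (nesting, applying the axiom to $\cF_1$, identifying the limit as $\cF_1(y)$, and the subsequence-of-subsequence conclusion) goes through as in the paper.
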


\begin{proof}
Since fixed points are hyperbolic, for each pair $m<n$, the intersection of $V$ and $g_m^{-1}g_n V$ is ``Markovian'', i.e., we have exactly one of the following two possibilities:
\begin{enumerate}
\item\label{item_inproof_case1} $\cF_1(g_mV) \supset \cF_1(g_nV)$ and $\cF_2(g_mV) \subset \cF_2(g_nV)$, or 
\item \label{item_inproof_case2} $\cF_2(g_mV) \supset \cF_2(g_nV)$ and $\cF_1(g_mV) \subset \cF_1(g_nV)$. 
\end{enumerate}
Moreover,  these containments are strict whenever $n \neq m$, since $g_m^{-1}g_n \neq \id$ (by our assumption) and its fixed points are hyperbolic.

\begin{claim} 
There is no infinite subsequence such that case \ref{item_inproof_case2} holds for all $m<n$ along the subsequence.  
\end{claim} 
\begin{proof} 
Suppose  for contradiction that $g_{n_k}$ is a subsequence where case \ref{item_inproof_case2} holds, so $\cF_2(g_{n_k}V)$ is decreasing as $k \to \infty$.  We first show this leads to a contradiction.  Relabel $g_{n_k}$ by $g_k$.  Fix some 
some $k_0$, let $R=g_{k_0}V$ and, for $n>k_0$ let $h_n= g_ng_{k_0}^{-1}$. Then, for all $n>k_0$, $h_n$ has a fixed point in $R$ (it is the image by $g_{m_0}$ of $p_{n,m_0}$ the fixed point of 
$g_{k_0}^{-1}g_{n}$), and $\cF_2(h_n R) = \cF_2(g_n V)$ is a decreasing sequence.  
By assumption the action of $G$ has uniformly hyperbolic fixed points, so $\cF_2(h_n R) = \cF_2(g_n V)$ must converge to a single leaf of $\cF_2$. 
However, as $g_n(x_n) \to y, g_n(x'_n) \to y'$ and $y' \not \in
\cF_2(y)$, we deduce that for all sufficiently large $n$, $g_nV$ contains both $g_n x_n$ and $g_n x_n'$. 
This contradicts the fact that $\cF_2(g_nV)$ must converge to a single leaf, and proves the claim. 
\end{proof} 

Now we can finish the proof of the Lemma.  Suppose for contradiction that 
$\cF_1(g_nV)$ does not converge to a segment of $\cF_1(y)$.  Since it contains the segment $g_n(\cF_1(x) \cap V)$ which converges to a segment of $\cF_1(y)$, we must have some subsequence $g_{n_k}$ such that $g_{n_k}(V)$ contains a nondegenerate rectangle $U$.  
By the claim above, $\cF_2(g_{n_k}(V))$ is not decreasing, so it must be the case that $\cF_1(g_{n_k}(V))$ is decreasing.  In this case, the uniform hyperbolicity of fixed points imply that $\cF_1(g_nV)$ must converge to a single leaf, contradicting that it must contain a nondegenerate segment. 
\end{proof} 

Finally, we record for use in the proof a very elementary lemma.  
\begin{lemma}\label{lem_disjoint_or_not}
 Let $B_n, n \in \mathbb{Z}$ be a collection of
nonempty subsets of some space $X$. Then there is an infinite sub-collection, denoted by 
$C_k = B_{n_k}$ satisfying: either 
\begin{enumerate}[label=(\roman*)]
\item \label{item_nonempty_intersection} $C_k \cap C_i \not = \emptyset$ for
all $k \not = i$, or 
\item \label{item_empty_intersection} $C_k \cap C_i = \emptyset$ for
all $k \not = i$
\end{enumerate} 
\end{lemma}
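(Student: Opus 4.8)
This lemma is a Ramsey-type statement about infinitely many subsets of a space, and the plan is to apply the infinite Ramsey theorem to a $2$-coloring of pairs of indices.

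The proof proceeds as follows. First I would set up a $2$-coloring of the set of unordered pairs $\{i,k\}$ of indices in $\bZ$: color $\{i,k\}$ with the first color if $B_i \cap B_k \neq \emptyset$, and with the second color if $B_i \cap B_k = \emptyset$. Since $\bZ$ is countably infinite, the infinite Ramsey theorem guarantees an infinite subset $N = \{n_1 < n_2 < \dots\} \subset \bZ$ of indices such that all pairs $\{n_i, n_k\}$ with $i \neq k$ receive the same color. Setting $C_k := B_{n_k}$, the monochromatic-in-the-first-color case gives conclusion \ref{item_nonempty_intersection}, and the monochromatic-in-the-second-color case gives conclusion \ref{item_empty_intersection}. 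That is the entire argument; there is no real obstacle here, as the lemma is quite literally a restatement of the pigeonhole/Ramsey principle, which is presumably why the authors call it ``very elementary.''

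If one prefers to avoid invoking Ramsey's theorem as a black box, one can instead argue by a direct recursive construction, though this is slightly more delicate and does not obviously terminate in the clean dichotomy without some care. The cleanest self-contained route: build a nested sequence of infinite index sets. Start with $S_0 = \bZ$. Having chosen an infinite set $S_j$ and a distinguished element $m_j \in S_j$, note that either infinitely many elements $n \in S_j$ satisfy $B_{m_j} \cap B_n \neq \emptyset$, or infinitely many satisfy $B_{m_j} \cap B_n = \emptyset$; let $S_{j+1}$ be an infinite such subset (recording which alternative occurred as a ``color'' assigned to $m_j$), and pick $m_{j+1} \in S_{j+1}$ distinct from $m_0, \dots, m_j$. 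Infinitely many of the $m_j$ share the same recorded color; along that sub-collection, any two indices $m_i, m_k$ with $i < k$ satisfy $m_k \in S_{i+1}$, so the intersection $B_{m_i} \cap B_{m_k}$ is nonempty (resp.\ empty) according to the common color. Relabeling this sub-collection as $C_1, C_2, \dots$ yields the two cases of the lemma. Either presentation is acceptable; I would use the Ramsey-theorem phrasing for brevity since the statement is only needed as a bookkeeping device in the proof of proper discontinuity.
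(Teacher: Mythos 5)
Your proposal is correct. Both routes you describe work: coloring unordered pairs $\{i,k\}$ by whether $B_i\cap B_k$ is empty and invoking the infinite Ramsey theorem is exactly this lemma, and your second, self-contained version (nested infinite index sets $S_0\supset S_1\supset\cdots$ with distinguished elements $m_j$, a recorded ``color'' for each $m_j$, and a final pigeonhole on colors) is the standard proof of infinite Ramsey for pairs; the one detail worth stating explicitly, which you do handle, is that for $i<k$ one has $m_k\in S_k\subseteq S_{i+1}$, so the intersection behaviour of $B_{m_i}\cap B_{m_k}$ is governed by the color of $m_i$. The paper's own argument is different in structure from both: it is a greedy iteration that repeatedly selects a set intersecting infinitely many of the remaining ones and restricts to those it meets (yielding case \ref{item_nonempty_intersection} if the process never halts), and, if the process halts, observes that every remaining set meets only finitely many others and extracts a pairwise disjoint subfamily greedily (case \ref{item_empty_intersection}). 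The paper's version avoids the final pigeonhole on colors at the cost of a case analysis on whether the iteration terminates; your Ramsey phrasing is shorter and arguably cleaner, and nothing in the rest of the paper depends on which proof is used.
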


\begin{proof}
Suppose there is $n_0$ such that the cardinality $\# \{ n \ | \ B_n \cap B_{n_0} \not = \emptyset \}$
is infinite. If that is the case, choose $n_0$ to be the smallest
such one, and let $C_1 = B_{n_0}$. By hypothesis, we can then eliminate all the elements
of the sequence $B_n$ which do not intersect $B_{n_0}$ as well
as those $B_n$ with $n < n_0$, and obtain a subsequence. We now iterate that argument:
of the remaining elements if there is one which intersects
infinitely many others, we call $C_2$ the first such element, 
and eliminate all elements that do not intersect $C_2$ as well as
those elements that precede $C_2$. 

If this can be done forever, then by construction the sequence
$C_k$ is a subsequence of the original sequence and satisfies
that $C_k \cap C_i \not = \emptyset$ for all $k \not = i$.
In this case we obtain option \ref{item_nonempty_intersection} of the claim.

Otherwise at some point we cannot continue. This means that there
is a subsequence $B_{n_i}$ of the original sequence
so that each $B_{n_i}$ intersects only finitely many others.  Let $C_1 = B_{n_1}$, and 
discard all (finitely many) $B_{n_i}$  which intersect $C_1$.  
Let $C_2$ be the first of the remaining sets, and restart the process.  
This produces an infinite sequence $C_k$ such that for any $k \not = i$ we have $C_k \cap C_i
= \emptyset$. This is case \ref{item_empty_intersection}.
\end{proof}

\begin{proof}[Proof of Proposition \ref{prop_proper_discontinuity}]
Suppose  $(x_n, x'_n) \to (x, x')$ and $g_n(x_n, x'_n) \to (y, y')$ in $W^>_1$.   We assume for contradiction that the sequence $g_n$ is not eventually constant. So, after passing to a subsequence we may assume $g_n \neq g_m$ for all $n \neq m$. 

Let $(R_x \times R'_x) \cap W^>_1 \subset (U_x \times U'_x) \cap W^>_1$ be a good closing pair for $(x,x')$.  By Lemma \ref{lem_wandering_1}, if $g_m^{-1}g_n R'_x \cap R'_x \neq \emptyset$ then $g_m^{-1}g_n R_x \cap R_x = \emptyset$.  Thus, up to switching the labels $x, x'$ (and reversing orientation on the $\cF_1$ leaves), we can assume that after passing to a (further) subsequence we have $g_m^{-1}g_n R_x \cap R_x = \emptyset$ for all $m \neq n$.  Equivalently, 
\begin{equation} \label{eq_disjoint} 
g_n R_x \cap g_m R_x =  \emptyset \text{ for all } m \neq n.
\end{equation}

Since the rectangles $g_n R_x$ contain points which converge to $y$, Equation \eqref{eq_disjoint} implies that the projection of $g_n R_x$ to the leaf space of at least one of $\cF_1$ or $\cF_2$ shrinks to a point or union of nonseparated points; that is we have either 
\begin{enumerate}[label=(\arabic*)] 
\item\label{item_case_F_1_shrinks}  Up to a subsequence, the projection of $g_n R_x$ to the $\cF_1$ leaf space limits to $\cF_1(y)$ or a union of nonseparated leaves containing $\cF_1(y)$, or
\item\label{item_case_F_2_shrinks} Up to a subsequence, the projection of $g_n R_x$ to the $\cF_2$ leaf space limits to $\cF_2(y)$ or a union of nonseparated leaves containing $\cF_2(y)$. 
\end{enumerate} 

Our next goal is to reduce to case \ref{item_case_F_1_shrinks}.  

First, apply Lemma \ref{lem_disjoint_or_not} to pass to a subsequence such that 
 $g_m^{-1} R'_x \cap R'_x \not = \emptyset$ for
all $m \not = n$, or $g_m^{-1}g_n R'_x \cap R'_x = \emptyset$ for all $n \neq m$. 

The next claim shows that the first situation gives case \ref{item_case_F_1_shrinks}:
 
\begin{claim} \label{case_1_holds} 
Assuming that $g_m^{-1}g_n R_x \cap R_x = \emptyset$, 
if $g_m^{-1}g_n R'_x \cap R'_x \neq \emptyset$ for all $m,n$, then \ref{item_case_F_1_shrinks} holds.
\end{claim}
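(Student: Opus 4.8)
The plan is to extract Claim \ref{case_1_holds} essentially for free from Lemma \ref{lem_fixed_points_shrink}, whose standing hypotheses are nearly what we already have in the situation of Proposition \ref{prop_proper_discontinuity}. Indeed we are given $(x_n,x'_n)\to(x,x')$ and $g_n(x_n,x'_n)\to(y,y')$ in $W^>_1$ with the $g_n$ pairwise distinct, so $g_m^{-1}g_n\neq\id$; and the hypothesis of the claim, $g_m^{-1}g_n R'_x\cap R'_x\neq\emptyset$ for all $m\neq n$, feeds into the closing-pair property of $(R'_x,U'_x)$ to produce, for each such pair, a fixed point of $g_m^{-1}g_n$ lying in $U'_x$. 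So the only two things to do are: put all these fixed points into one compact rectangle, and then push the conclusion of the lemma back down from that rectangle to $R_x$.

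First I would fix, at the start of the proof of Proposition \ref{prop_proper_discontinuity}, a compact rectangle $V\subset P$ with $x,x'$ in its interior and with $\overline{U'_x}\subset V$; such a $V$ exists once the good closing pair is taken with sufficiently small outer neighborhoods (thicken the compact $\cF_1$-segment from $x$ to $x'$ to a rectangle and choose $U_x,U'_x$ inside it), and we lose nothing by arranging this in advance. With this $V$, Lemma \ref{lem_fixed_points_shrink} applies and tells us that $\cF_1(g_n V)$ converges to the leaf $\cF_1(y)$; equivalently, the image of $g_n V$ in the leaf space of $\cF_1$ contracts to the single point $\cF_1(y)$. Then comes the transfer step: by the definition of a good neighborhood, $R_x$ and $R'_x$ have the same $\cF_1$-saturation, and $R'_x\subseteq U'_x\subseteq V$, so the $\cF_1$-saturation of $R_x$ sits inside that of $V$; hence the image of $g_n R_x$ in the $\cF_1$ leaf space is contained in that of $g_n V$ and so is squeezed toward $\cF_1(y)$. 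Since $g_n R_x$ contains $g_n x_n\to y$, that image in fact converges to $\cF_1(y)$, which is precisely alternative \ref{item_case_F_1_shrinks}.

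I do not expect a real obstacle: the substance is all in Lemma \ref{lem_fixed_points_shrink}, and what remains is bookkeeping. The only delicate point is the choice of $V$ --- ensuring the fixed points coming from the closing property of $(R'_x,U'_x)$ all land in one compact rectangle that also contains $x$ and $x'$ in its interior --- and this is harmless since we may shrink the outer neighborhoods of the good closing pair freely. (The claim's other hypothesis $g_m^{-1}g_n R_x\cap R_x=\emptyset$, recording the earlier reduction in the proof, is not needed for this argument.)
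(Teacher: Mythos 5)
Your proposal is correct and follows the paper's own argument: the paper likewise invokes the closing property of $(R'_x,U'_x)$ to place a fixed point of each $g_m^{-1}g_n$ in $U'_x$, assembles a compact rectangle $V$ containing $x,x'$ in its interior together with $U'_x$ (there it is taken to be $U_x\cup U'_x$ together with the connecting $\cF_1$-leaf segments, rather than arranged in advance by shrinking the closing pair), applies Lemma \ref{lem_fixed_points_shrink}, and transfers the conclusion to $R_x$ via the shared $\cF_1$-saturation. Your observation that the hypothesis $g_m^{-1}g_nR_x\cap R_x=\emptyset$ is not actually used here also matches the paper's proof.
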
 
\begin{proof} 
Suppose $g_m^{-1}g_n R'_x \cap R'_x \neq \emptyset$ for all $m, n$.  Then by the closing property, $g_m^{-1}g_n$ has a fixed point in $U'_x$. 
Let $V$ be the rectangle consisting of the union of $U_x$, $U'_x$ and the segment of each leaf of $\cF_1(R_x)$ between $U_x$ and $U'_x$.

Since $g_m^{-1}g_n$ has a fixed point in $V$ for all $m,n$, Lemma \ref{lem_fixed_points_shrink} implies the claim.
\end{proof} 

Now, if we are not in this situation and
case \ref{item_case_F_1_shrinks} does not hold, we next show we can swap the roles of $x$ and $y$ and obtain an equivalent situation in which case \ref{item_case_F_1_shrinks} holds, as follows:  Consider the sequences $y_n := g_n(x_n) \to y$ and $y'_n := g_n(x'_n) \to y'$, and note that $h_n(y_n) \to x$ and $h_n(y'_n) \to x'$ where $h_n = g_n^{-1}$.   Let $(R_y \times R'_y) \cap W^>_1 \subset (U_y \times U'_y) \cap W^>_1$ be a closing pair for $(y,y')$ in $W_1^>$.

We can apply the same analysis we did previously for $(x,x')$ to $(y,y')$.
So as in that analysis we can assume, up to switching $y, y'$ and
reversing the orientation of $\cF_1$, that
$h_m^{-1}h_n R_y \cap R_y = \emptyset$ for every $m \not = n$. 
Since case \ref{item_case_F_2_shrinks} holds for the sequence $g_n$, we have $\cF_2(g_n R_x) \subset \cF_2(R_y)$ 
for all $n$ sufficiently large.  Thus, $\cF_2(h_nR_y) \supset \cF_2(R_x)$ for all $n$ sufficiently large.  In particular, \ref{item_case_F_2_shrinks} \emph{cannot} hold for the $h_n$, and we have a parallel setting for 
$y_n, y'_n, h_n$ where \ref{item_case_F_1_shrinks} holds.   Thus, we have arrived at the reduction to case  \ref{item_case_F_1_shrinks}, and it remains simply to derive a contradiction under this assumption.  

\begin{claim} 
Let $J \subset \cF_1(x)$ be the segment between $R_x$ and $R'_x$. Then $g_n(J)$ converges to a single point. 
\end{claim}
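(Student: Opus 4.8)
The plan is to prove the stronger statement that $g_n(J)\to\{y\}$. Recall that we are in case \ref{item_case_F_1_shrinks}, so the set $\cF_1(g_nR_x)$ of $\cF_1$‑leaves meeting $g_nR_x$ shrinks onto $\cF_1(y)$ (or onto a finite union of leaves nonseparated from it), and the same holds for $g_nR'_x$ since its $\cF_1$‑saturation agrees with that of $R_x$. As $J\subset\cF_1(x)\subset\cF_1(R_x)$ and $g_n$ preserves the foliations, $g_n(J)$ is an arc lying on the single leaf $\cF_1(g_nx)$ and inside $\cF_1(g_nR_x)$, so every subsequential limit of $g_n(J)$ is a connected subset of $\cF_1(y)$ — a point or a nondegenerate subarc. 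Write $p,p'$ for the endpoints of $J$, with $p\in\partial R_x$, $p'\in\partial R'_x$ and $p'\in\cF_1^>(p)$. Since $\cF_1(g_nx)\to\cF_1(y)$ and each $\cF_1$‑leaf meets each $\cF_2$‑leaf at most once, once we also know that both endpoints $g_n(p)$ and $g_n(p')$ converge to $y$, the image of $g_n(J)$ in the $\cF_2$‑leaf space is the interval between the images of $g_n(p)$ and $g_n(p')$ and hence collapses, so $g_n(J)\to\{y\}$. Thus it remains to prove $g_n(p)\to y$ and $g_n(p')\to y$.

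For $g_n(p)\to y$: the rectangles $g_nR_x$ are pairwise disjoint by \eqref{eq_disjoint}, have $\cF_1$‑leaf sets shrinking onto $\cF_1(y)$, and each contains $g_nx_n\to y$. I claim their images in the $\cF_2$‑leaf space cannot stay of length $\ge\eps$ along any subsequence: each such image contains the image of $g_nx_n$, which converges to the image of $y$, so an elementary interval‑packing argument shows that for all large $n$ in the subsequence these images all contain one of two fixed nondegenerate intervals, so infinitely many of them pairwise intersect; but for large $n,m$ the $\cF_1$‑leaf sets of $g_nR_x$ and $g_mR_x$ overlap, and along any shared $\cF_1$‑leaf each rectangle sweeps out its entire $\cF_2$‑image, so overlapping $\cF_2$‑images would force $g_nR_x\cap g_mR_x\ne\emptyset$, contradicting \eqref{eq_disjoint}. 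Hence the $\cF_2$‑extent of $g_nR_x$ tends to $0$, so $g_nR_x$ shrinks to $y$ and in particular $g_n(p)\to y$.

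For $g_n(p')\to y$: the point $g_n(p')$ is the endpoint of the arc $\cF_1(g_nx)\cap g_nR'_x$ closest to $g_nR_x$; since $g_nR'_x$ has the same $\cF_1$‑leaf set as $g_nR_x$, is disjoint from it, and contains $g_nx'_n\to y'$, the image of $g_n(p')$ in the $\cF_2$‑leaf space lies — using the previous paragraph, which places $g_nR_x$ near $y$ — between the images of $y$ and $y'$, so it subconverges to some $w\in\cF_1(y)$. If $w\ne y$ then $w\in\cF_1^>(y)$, so $(p,p')$ and $(y,w)$ both lie in $W^>_1$, with $g_n(p)\to y$ and $g_n(p')\to w$; as $G$ has the closing property, Lemma \ref{lem_wandering_2} would force the $g_n$ to be eventually constant, contradicting that they are pairwise distinct. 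Hence $g_n(p')\to y$, and together with the first paragraph this gives $g_n(J)\to\{y\}$.

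The step I expect to be the crux is the collapse of the $g_nR_x$ \emph{as rectangles} — the interval‑packing argument forcing the $\cF_2$‑direction to contract as well, not just the $\cF_1$‑direction guaranteed by case \ref{item_case_F_1_shrinks} — together with the bookkeeping in the degenerate situation, allowed by case \ref{item_case_F_1_shrinks}, where the $\cF_1$‑leaf sets accumulate on a family of leaves nonseparated from $\cF_1(y)$ rather than on $\cF_1(y)$ alone; there one must still argue that $g_nR_x$, and hence $g_n(p)$ and $g_n(p')$, converge onto the \emph{same} leaf, which is forced because $g_nR_x$ projects to an interval in the $\cF_1$‑leaf space whose image contains that of $g_nx_n\to y$.
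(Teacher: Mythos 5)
Your endgame---feeding the two endpoints of $J$ into Lemma \ref{lem_wandering_2}---is exactly the paper's argument, but you route it through a strictly stronger assertion, namely that the whole rectangle $g_nR_x$ collapses to the single point $y$, and the proof of that assertion has a genuine gap. The load-bearing sentence of your second paragraph is ``for large $n,m$ the $\cF_1$-leaf sets of $g_nR_x$ and $g_mR_x$ overlap,'' and nothing established at this stage gives it. Case \ref{item_case_F_1_shrinks} only says that the intervals $\cF_1(g_nR_x)$ in the $\cF_1$-leaf space limit onto $\cF_1(y)$ (or a finite nonseparated family containing it); together with the fact that each contains $\cF_1(g_nx_n)\to\cF_1(y)$, this is perfectly consistent with those intervals being \emph{pairwise disjoint} --- a ``stack'' of disjoint intervals accumulating on $\cF_1(y)$ from one side, each containing a point of a sequence converging to $\cF_1(y)$. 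In that scenario \eqref{eq_disjoint} holds automatically and places no constraint whatsoever on the $\cF_2$-extent of the rectangles, your interval-packing argument never produces two rectangles sharing an $\cF_1$-leaf, and $g_nR_x$ may perfectly well limit onto a nondegenerate subsegment of $\cF_1(y)$. So $g_n(p)\to y$ is not established; and since your third paragraph invokes it explicitly (both to locate $w$ between $y$ and $y'$ and to feed the pair $(y,w)$ into Lemma \ref{lem_wandering_2}), the argument as written does not close.

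Note that the claim is weaker than what you attempt: it only asserts convergence of $g_n(J)$ to \emph{some} point $z$, and the rest of the paper's proof uses only that $z$ lies on the segment of $\cF_1(y)$ between $y$ and $y'$. The repair is to drop the identification of the limit with $y$: let $b$ and $b'$ be subsequential limits of the two endpoints $g_n(p)$ and $g_n(p')$ (they exist and lie on the closed segment of $\cF_1(y)$ between $y$ and $y'$, by the containments in your first paragraph); if $b\neq b'$, then $(p,p')$ and $(b,b')$ both lie in $W^>_1$ and Lemma \ref{lem_wandering_2} forces the $g_n$ to be eventually constant, a contradiction. This is the paper's proof and requires none of your second paragraph. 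A smaller point: your treatment of the nonseparated-leaf degeneracy in the final paragraph is incomplete for the same structural reason --- knowing that the interval $\cF_1(g_nR_x)$ contains $\cF_1(g_nx_n)\to\cF_1(y)$ does not prevent $\cF_1(g_nx)$ itself, a different point of that interval, from converging to a leaf nonseparated from $\cF_1(y)$; one should phrase the argument with $x_n$ (or shrink $R_x$) rather than with the fixed point $x$.
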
 

\begin{proof}
Let $a, a' \in \cF_1(x)$ be the endpoints of $J$.  Since $g_n(x) \to y, g_n(x') \to y'$, it follows that 
$$\cF_2(g_n(x)) \cap \cF_1(y) \ \ \ {\rm and} \ \ \ 
\cF_2(g_n(x')) \cap \cF_1(y)$$
\noindent
stay in a compact interval of $\cF_1(y)$. If $g_n(J)$ does not limit to a single point, then we can pass to a subsequence so that $g_n(a) \to b$ and $g_n(a') \to b'$ for some $b \neq b'$. Moreover, we must have that both $b$ and $b'$ are on the closed interval of the $\cF_1$-leaf between $y$ and $y'$.  Thus $(b,b') \in W^>_1$, contradicting Lemma \ref{lem_wandering_2}. 
\end{proof} 

Let $z = \lim_{n \to \infty} g_n(J)$.  
By construction, $z$ lies on the closed interval between $y$ and $y'$ in $\cF_1(y)$.
Suppose $V_z \subset U_z$ is a pair of neighborhoods of $z$ where the closing lemma applies.  Then for any pair $m, n$ large enough, the map $g_m g_n^{-1}$ takes $g_n(J) \subset V_z$ to $g_m(J) \subset V_z$, so 
has a fixed point in $U_z$.    We may choose such a $U_z$ small enough so that it can be extended to a compact rectangle $U$ with $y$ and $y'$ in its interior; thus $U$ has the property that $g_m g_n^{-1}$ has a fixed point in $U$ for all sufficiently large $m, n$.   Apply Lemma \ref{lem_fixed_points_shrink}, with $(y,y')$ playing the role of $(x,x')$ and $g_n^{-1}$ that of $g_n$.  The conclusion states that $\cF_1(g_n^{-1} U)$ converges to the leaf $\cF_1(x)$.

In particular, we must have that for all large enough $n$, $\cF_1(g_n^{-1} U)\subset \cF_1(R_x)$, or equivalently $\cF_1(U)\subset \cF_1(g_n R_x)$.  But this contradicts the assumption that we were in case \ref{item_case_F_1_shrinks}, i.e., that $\cF_1(g_n R_x)$ must shrink to a single leaf or union of nonseparated leaves. This contradiction proves the proposition.  
\end{proof}

Given Proposition \ref{prop_proper_discontinuity}, Theorem \ref{thm_closing_implies_prop_disc} is almost immediate:

\begin{proof}[Proof of Theorem \ref{thm_closing_implies_prop_disc}]
By Proposition \ref{prop_proper_discontinuity}, the action of $G$ on $W_1^>$ is properly discontinuous.  To see that it is free, we apply an argument as in Lemma \ref{lem_wandering_1}:  if a nontrivial $g\in G$ fixes a point $(x,x')\in W_1^>$, then in particular it fixes both $x$ and $x'$ in $\cF_1(x)$.
This contradicts the property of hyperbolic fixed points.
\end{proof}

\subsection{Proof of Theorem \ref{thm_closing_plus_transitive_implies_prop_disc}}
Given the result of Proposition \ref{prop_proper_discontinuity}, Theorem \ref{thm_closing_plus_transitive_implies_prop_disc}, 
is an immediate consequence of the following: 

\begin{theorem}[Conditions for uniformly hyperbolic fixed points]\label{thm_transitivity_and_discrete_implies_uniform}
Let $G < \Aut^+_1(P)$.  Assume
\begin{enumerate}[label=(\roman*)]
\item $G$ has the closing property,
\item $G$ has hyperbolic fixed points,
\item For any $x\in P$ fixed by some nontrivial $g\in G$, the orbit $G\cdot x$ is closed and discrete, and
\item The set of fixed points of nontrivial elements of $G$ is dense in $P$.
\end{enumerate}
Then $G$ acts with uniformly hyperbolic fixed points.
\end{theorem}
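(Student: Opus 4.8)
The plan is to argue by contradiction. Fix a compact rectangle $R$ and pairwise distinct elements $g_n\in G$ with fixed points $p_n\in R$ such that (say) $\cF_1(g_n\mathring R)\supsetneq\cF_1(g_{n+1}\overline R)$ for all $n$, and suppose $I:=\bigcap_n\cF_1(g_nR)$ is strictly larger than one leaf, so that $I$ contains a nonempty open $\cF_1$-saturated region $\mathring I$. Three elementary remarks will be used throughout. First, since $I\subseteq\cF_1(g_{n+1}\overline R)\subseteq\cF_1(g_n\mathring R)$, every leaf of $I$ meets $g_n\mathring R$ for every $n$; in particular $\mathring I\subseteq\cF_1(g_nR)$ for all $n$. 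Second, because two transverse leaves of a bifoliated plane intersect in at most one point, a rectangle $S$ coincides with the set of points $z$ for which both $\cF_1(z)$ and $\cF_2(z)$ meet $S$; equivalently, any $\cF_1$-leaf meeting $S$ and any $\cF_2$-leaf meeting $S$ intersect each other, and for a point $q$ one has $q\in S$ iff $\cF_1(q)$ and $\cF_2(q)$ both meet $S$. Third, each $g_n$ fixes $p_n$ hyperbolically by hypothesis (ii), and $p_n\in R\cap g_nR$ since $g_n^{-1}p_n=p_n\in R$.

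The main step, which I expect to be the principal obstacle, is to produce an open interval $\mathring J$ of $\cF_2$-leaves such that every $\ell\in\mathring J$ meets $g_nR$ for all sufficiently large $n$. This should follow from a Markov-type analysis of $g_n$ near the hyperbolic fixed point $p_n$, exactly in the spirit of the dichotomy used in the proof of Lemma~\ref{lem_fixed_points_shrink}: after passing to a subsequence so that the direction of hyperbolicity of $p_n$ is constant along the sequence, the fact that the $\cF_1$-saturations $\cF_1(g_nR)$ strictly shrink toward the nondegenerate set $I$ forces the $\cF_2$-saturations $\cF_2(g_nR)$ to ``grow'', in the sense that their inferior limit has nonempty interior. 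The delicate points are to make the local hyperbolic normal form at the \emph{varying} point $p_n$ interact with the \emph{fixed} rectangle $R$, and to dispose of degenerate configurations (for instance $p_n$ on a boundary leaf of $R$, or $\cF_1(p_n)$ on the boundary of $I$), which one handles by a preliminary slight enlargement of $R$ and by working inside the open region $\mathring I$.

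Granting $\mathring J$, the conclusion is straightforward. The open set $U$ of points lying simultaneously on a leaf of $\mathring I$ and a leaf of $\mathring J$ is nonempty (for $n$ large it contains $g_nR\cap(\text{leaves of }\mathring I\text{ and }\mathring J)$, by the second remark) and, being a nonempty open subset of $P$, is not contained in a single leaf; by the density hypothesis (iv) it therefore contains two points $q,q'$ lying on distinct $\cF_1$-leaves, each fixed by a nontrivial element of $G$. By hypothesis (iii), $G\cdot q$ and $G\cdot q'$ are closed and discrete. Since $\cF_1(q)\subseteq\mathring I$ meets $g_nR$ for all $n$ and $\cF_2(q)\in\mathring J$ meets $g_nR$ for all large $n$, the second remark gives $q\in g_nR$, i.e.\ $g_n^{-1}(q)\in R$, for all large $n$; as $R$ is compact and $G\cdot q$ is closed and discrete, $R\cap(G\cdot q)$ is finite, so along a subsequence $g_n^{-1}(q)$ is constant, and along a further subsequence $g_n^{-1}(q')$ is constant as well. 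Choosing two distinct indices $n,m$ in this final subsequence, the element $g:=g_mg_n^{-1}$ is nontrivial (the $g_k$ are pairwise distinct) and fixes both $q$ and $q'$; by (ii) these are hyperbolic fixed points of $g$, so Observation~\ref{obs_empty_intersection} forces $\cF_1(q)\cap\cF_2(q')=\emptyset$. But $\cF_1(q)$ and $\cF_2(q')$ both meet $g_nR$ for our large index $n$, hence intersect by the second remark -- a contradiction. This establishes uniformly hyperbolic fixed points, and Theorem~\ref{thm_closing_plus_transitive_implies_prop_disc} then follows from Proposition~\ref{prop_proper_discontinuity}.
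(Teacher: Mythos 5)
Your endgame is correct and is genuinely different from (and arguably cleaner than) the paper's. Both proofs share the same skeleton: produce a region with nonempty interior contained in $g_nR$ for all large $n$, use density of fixed points to find a fixed point $q$ there, use closedness and discreteness of $G\cdot q$ together with compactness of $R$ to make $g_n^{-1}(q)$ eventually constant along a subsequence, and then contradict something. Where the paper then invokes Lemma \ref{lem_cyclic_stabilizer} to see that $g_ng_m^{-1}$ lies in the cyclic stabilizer of the single point $q$ and derives the contradiction from the contraction dynamics of a generator, you instead find a \emph{second} fixed point $q'$ on a different $\cF_1$-leaf, observe that $g_mg_n^{-1}$ fixes both $q$ and $q'$ hyperbolically, and contradict Observation \ref{obs_empty_intersection} because $\cF_1(q)$ and $\cF_2(q')$ both cross the rectangle $g_nR$. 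This avoids the cyclic-stabilizer lemma entirely and I see no flaw in it.

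The genuine gap is exactly the step you flag yourself: the existence of the open interval $\mathring J$ of $\cF_2$-leaves meeting $g_nR$ for all large $n$, equivalently the nondegeneracy of the limit region. You write that this ``should follow from a Markov-type analysis'' but do not carry it out, and this is where the hypothesis of hyperbolic fixed points actually does its work -- everything after it is soft. The mechanism the paper uses is the following: since $p_n$ is a hyperbolic fixed point of $g_n$ lying in $R$, the two $g_n$-invariant leaves $\cF_1(p_n)$ and $\cF_2(p_n)$ cross $R$ entirely, so $\cF_1(g_nR)=\cF_1\bigl(g_n(R\cap\cF_2(p_n))\bigr)$ and $\cF_2(g_nR)=\cF_2\bigl(g_n(R\cap\cF_1(p_n))\bigr)$; hyperbolicity (plus preservation of the rays at $p_n$) then forces the intersection of $R$ with $g_nR$ to be Markovian, i.e.\ exactly one of $\cF_1(g_nR)\supsetneq\cF_1(R),\ \cF_2(g_nR)\subsetneq\cF_2(R)$ or the reverse holds. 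Combined with the assumed strict nesting $\cF_1(g_{n+1}\overline R)\subset\cF_1(g_n\mathring R)$, one checks that once some $g_N$ satisfies $\cF_1(g_NR)\subsetneq\cF_1(R)$, so do all later $g_n$; in that case $\cF_2(g_nR)\supsetneq\cF_2(R)$ for all $n\ge N$ and you may simply take $\mathring J$ to be the interior of $\cF_2(R)$. The remaining case, where $\cF_1(g_nR)\supsetneq\cF_1(R)$ for \emph{every} $n$ (so that $I\supset\cF_1(R)$ is automatically nondegenerate) while the strips $\cF_2(g_nR)\subsetneq\cF_2(R)$ are a priori neither nested nor with common interior, is precisely the delicate configuration your sketch does not address, and it is not disposed of by ``working inside $\mathring I$'' or enlarging $R$; the paper's corresponding assertion that the transverse saturations are monotone is what makes its limit region a nondegenerate product. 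Until you either establish that monotonicity (or otherwise locate a fixed point lying in infinitely many of the $g_nR$ in this last configuration, as in the dichotomy of Lemma \ref{lem_fixed_points_shrink}), the proof is incomplete.
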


We start with a lemma that may be of independent interest. Its proof is an adaptation to our setting of that of \cite[Proposition 3.1.1]{BM_book}.
\begin{lemma}\label{lem_cyclic_stabilizer}
Let $G < \Aut^+(P)$. Suppose that 
\begin{enumerate}[label=(\roman*)]
\item $G$ has the closing property, and
\item $G$ has hyperbolic fixed points.
\end{enumerate}

Then, for any $x\in P$, its stabilizer $\mathrm{Stab}_G(x)$ in $G$ is either trivial or infinite cyclic. 
\end{lemma}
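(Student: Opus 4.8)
The plan is to reduce, via Hölder's theorem, to a free action of $\mathrm{Stab}_G(x)$ on a ray, and then to rule out non-discreteness of that action using the closing property; this mirrors the proof of \cite[Proposition 3.1.1]{BM_book}. Write $\Gamma := \mathrm{Stab}_G(x)$ and assume $\Gamma \neq \{\mathrm{id}\}$. I would begin with two reductions. First, $\Gamma$ is torsion-free: a nontrivial finite-order $g \in \Gamma$ cannot satisfy the hyperbolic fixed point hypothesis at $x$, since neither $g$ nor $g^{-1}$ can be a topological contraction on $\cF_1(x)$. Second, $\Gamma$ permutes the finitely many rays making up $\cF_1^>(x)$; let $\Gamma_0 \trianglelefteq \Gamma$ be the kernel of this permutation action, so $[\Gamma : \Gamma_0] < \infty$. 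It then suffices to show $\Gamma_0$ is trivial or infinite cyclic, because a torsion-free group with a finite-index subgroup that is trivial or infinite cyclic is itself trivial or infinite cyclic (a standard fact).

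So fix one ray $\rho \cong \mathbb{R}$ of $\cF_1^>(x)$; then $\Gamma_0$ acts on $\rho$, and this action is by orientation-preserving homeomorphisms since $G < \Aut^+(P)$. By the hyperbolic fixed point hypothesis, every nontrivial $h \in \Gamma_0$ acts --- after possibly replacing $h$ by $h^{-1}$ --- as a topological contraction on $\cF_1(x)$, hence has no fixed point on $\rho$; so $\Gamma_0$ acts freely (and faithfully) on $\rho$. Because any orientation-preserving fixed-point-free homeomorphism of $\mathbb{R}$ lies everywhere above or everywhere below the diagonal, the relation $h_1 < h_2 \iff h_1(y) < h_2(y)$ is a left-invariant total order on $\Gamma_0$ independent of the choice of $y \in \rho$, and it is archimedean. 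Hölder's theorem then gives that $\Gamma_0$ is abelian and that there is an injective, order-preserving homomorphism $\iota \colon \Gamma_0 \hookrightarrow (\mathbb{R}, +)$.

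The heart of the argument is to prove that $\iota(\Gamma_0)$ is discrete in $\mathbb{R}$ --- equivalently, that $\Gamma_0$ is cyclic --- and this is where the closing property enters. Suppose not. Then $\iota(\Gamma_0)$ is a non-cyclic, hence dense, subgroup of $\mathbb{R}$, so there are pairwise distinct nontrivial $h_k \in \Gamma_0$ with $0 < \iota(h_k) \to 0$. Pick a nontrivial $g \in \Gamma_0$ with $\iota(g) < 0$ and fix a point $y \in \rho$. For all large $k$ we have $0 < \iota(h_k) < \iota(g^{-1})$, hence $\mathrm{id} < h_k < g^{-1}$ in the order on $\Gamma_0$, which (by the base-point independence above) means $y < h_k(y) < g^{-1}(y)$. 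Thus the points $h_k(y)$ lie in the compact interval $[y, g^{-1}(y)] \subset \cF_1^>(x)$, and after passing to a subsequence $h_k(y) \to w$ for some $w \in \cF_1^>(x)$. Now $(x,y) \in W^>_1$, each $h_k$ fixes $x$, and $h_k \cdot (x, y) = (x, h_k(y)) \to (x, w) \in W^>_1$; so Lemma \ref{lem_wandering_2}, applied to the \emph{constant} sequence $(x,y)$, forces $(h_k)$ to be eventually constant, contradicting that the $h_k$ are distinct. Hence $\Gamma_0$, and therefore $\Gamma$, is trivial or infinite cyclic.

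The step I expect to be the main obstacle is precisely this last one. A more direct attempt --- trying to show outright that $\Gamma_0$ acts properly discontinuously on $\rho$ --- fails because the recurrence points it produces vary with $k$, whereas Lemma \ref{lem_wandering_2} requires a point of $W^>_1$ that stays fixed along the sequence. The device above circumvents this by exploiting the Hölder order: it confines $h_k(y)$, for one fixed $y$, to a compact interval, so that the base point $(x,y)$ of the recurrence is genuinely constant. Finally, in the presence of prong singularities the same proof goes through once one replaces the closing property, good neighborhoods, and Lemma \ref{lem_wandering_2} by their versions for singular bifoliated planes (cf.\ Definition \ref{def_closing_prop_general_case}), so no further assumptions on $G$ are needed.
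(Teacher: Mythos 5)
Your proof is correct and follows essentially the same route as the paper's: use hyperbolicity of fixed points to get a free action of the stabilizer on a ray of $\cF_1(x)$, apply H\"older's theorem, and rule out an indiscrete (non-cyclic) group by producing a recurrent point of $W_1^>$ that contradicts Lemma \ref{lem_wandering_2}. Your extra care in passing to the finite-index subgroup preserving each prong ray, and in only claiming that $h_k(y)$ accumulates somewhere in a compact subinterval of the ray (rather than at $y$ itself), are welcome refinements of the paper's slightly quicker wording but do not change the argument.
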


\begin{proof}
Let $x \in P$, and let $\mathrm{Stab}_G(x)$ denote its stabilizer in $G < \Aut^+(P)$.
Since $G$ acts with hyperbolic fixed points, $x$ is the unique fixed point in $\cF_1(x)$ of any nontrivial element of $\mathrm{Stab}_G(x)$.  Furthermore, the action of $\mathrm{Stab}_G(x)$ restricted to any ray of $\cF_1(x)$ (say for instance $\cF_1^>(x)$) is faithful and free.  Thus, we may apply H\"older's Theorem (see, e.g., \cite[Section 2.2.4]{Nav11}), and conclude that $\mathrm{Stab}_G(x)$ is abelian and the action of $\mathrm{Stab}_G(x)$ on this ray is semi-conjugate to an action by translations on $\bR$.  If $\mathrm{Stab}_G(x)$ is not cyclic, then this action is semi-conjugate to an indiscrete group of translations. This means that there exists $x'$ in $\cF_1^>(x)$ and a sequence $h_n\in\mathrm{Stab}_G(x)$ such that $h_n x'$ accumulates on $x'$. But, since $h_n x = x$ for all $n$, this implies that $h_n(x,x')$ accumulates to $(x,x')$, contradicting the fact that the action of $G$ on $W_1^>$ is wandering (Lemma \ref{lem_wandering_2}). 
\end{proof}

\begin{proof}[Proof of Theorem \ref{thm_transitivity_and_discrete_implies_uniform}]
Suppose for a contradiction that there exists a compact rectangle $R$ and a sequence $g_n\in G$ of distinct elements such that $g_n$ fixes a point in $R$ and $\cF_2(g_nR)$ is decreasing but does not converge to a single leaf.  (The case for $\cF_1$ in place of $\cF_2$ is identical).  Since fixed points are hyperbolic, we have the sequence $\cF_1(g_nR)$ must be increasing, i.e., $\cF_1(g_{n-1}R) \subset \cF_1(g_{n}R)$. 
This monotonicity implies that $g_nR$ limits to a nondegenerate trivially foliated region $U$ in $P$.  

Since $U$ is not contained in a single leaf, it has non-empty interior. By density of fixed points, there exists $x\in \mathring U$ fixed by some nontrivial element $h\in G$. Since $x\in \mathring U$, for all $n$ large enough, we must have that $x\in g_nR$, which implies that, for all $n$ large enough, $g_n^{-1}x\in R$.

Since $G\cdot x$ is closed and discrete, this implies that, up to a subsequence, $g_n^{-1}x$ is eventually constant. So without loss of generality, we may assume that for all $n,m$, $g_n^{-1} x = g_m^{-1} x$.
In particular, for all $n,m$, we have $g_n g_m^{-1} \in \mathrm{Stab}_G(x)$. 

By Lemma \ref{lem_cyclic_stabilizer}, $\mathrm{Stab}_G(x)$ is cyclic.  Let $h$ denote the generator of $\mathrm{Stab}_G(x)$ that acts as a contraction on $\cF_2(x)$. Then, we have that for all $n>1$, there exists $k_n$ such that $g_n g_1^{-1} = h^{k_n}$ and
\[
\cF_1(g_n R)= \cF_1(g_n g^{-1}_1 (g_1R)) = \cF_1(h^{k_n} g_1R))
\]
Since $\cF_1(g_n R)$ is assumed to be a decreasing sequence, and $x\in g_1 R$, we must have $k_n \to +\infty$ as $n \to \infty$.  Since $h$ acts as a contraction on $\cF_2(x)$, we therefore deduce that $\cF_1(h^{k_n} g_1R)= \cF_1(g_n R)$ must actually converge to $\cF_1(x)$, which contradicts our original assumption.
\end{proof}


\section{General case: planes with singularities} \label{sec_pA_case}
We describe the necessary modifications and adaptations to prove our results on bifoliated planes with prong singularities, 
starting with Theorems \ref{thm_compact} and \ref{thm_noncompact}.  

We first need to introduce the space $W^\ast_1$ and modify the definition of good neighborhoods, to account for the fact that the neighborhood of a prong singularity in $P$ is not a trivially foliated rectangle but rather a semi-branched cover of such.  As explained in the introduction, the general idea is as follows: in the original definition of $W^>_1$, the second coordinate (moving along a leaf) parametrized the orbits of a flow.  When this coordinate lies on a singular leaf, we still wish for it to live in a 1-parameter family, so we make an ad-hoc identification of the prongs.  

To make this precise, let $(P, \cF_1, \cF_2)$ be a bifoliated plane, with only even prong singularities (at most one on each leaf), and without $\cF_1$-infinite product regions. 
Suppose $G$ is a torsion-free group acting by automorphisms of $P$, preserving orientation along leaves of $\cF_1$.  As before, we denote the positive side of $\cF_1(x)$ by $\cF_1^>(x)$. 
For the set-up, we assume also that the stabilizer of any prong singularity under the action of $G$ is discrete.  This hypothesis is satisfied by any pseudo-Anosov flow (and also follows from the closing property and hyperbolic fixed points as in Lemma \ref{lem_cyclic_stabilizer}).

For each $2k$ prong singularity $p \in P$, let $r_1(p), \ldots, r_k(p)$ denote the rays in $\cF_1^>(p)$  of the prong, and fix a proper homeomorphism $\sigma^p_j\colon r_1(p) \to r_j(p)$ that is equivariant with respect to $\Stab_G(p)$ in the following sense:  if $g \in \Stab_G(p)$ fixes all rays through $p$, then $\sigma^p_j(g(x)) = g \sigma^p_j(x)$.   One can define such homeomorphisms arbitrarily on a fundamental domain for the (cyclic) stabilizer of $r_1$ and then extend equivariantly; fixing $\sigma_1 = id$. 

\begin{definition}\label{def_W_singular}
Define a space $W_1^\ast \subset P \times 2^P$ as follows. 
We say $(x, Y) \in W_1^\ast$ if: 
\begin{itemize}
\item $Y = \{y\}, \, y \in \cF_1^>(x)$, $x$ is not singular, and $\cF_1^>(x)$ has no singular point between $x$ and $y$, 
or
\item $Y = \{y, \sigma^p_2(y), \ldots, \sigma^p_k(y) \}$, $y \in \cF_1^>(x)$  and  $p \in \cF_1(x)$ is a 2k-prong, either between $x$ and $y$  or equal to $x$. 
\end{itemize} 
We topologize $W_1^\ast$ by saying that $(x_n, Y_n)$ converges to $(x, Y)$ if $x_n \to x$, and {\em some point} of $Y_n$ converges to some point of $Y$.  
\end{definition}

One can verify from the definition that $W_1^\ast$ is homeomorphic to $P \times \bR = \bR^3$.  Note that $P$ was assumed to have a discrete set of prongs, and at most one prong on any leaf.  

With this setup, the proof of Theorem \ref{thm_compact} goes through once we have defined appropriate generalization of the good neighborhoods of Definition \ref{def_good_neighborhood}.

If $\cF_1(x)$ is nonsingular, then a good neighborhood for $(x,\{y\})$ is defined as before. 
When $\cF_1(x)$ is singular, there are four cases to treat, and we describe quickly how to replace the rectangles $R$ and $Q$:
\begin{enumerate}
\item $(x,Y)= (x,\{y\})$ and neither $x$ nor $y$ are singular points. Then $R$ and $Q$ are both rectangles around $x$ and $y$, as in the nonsingular case.
\item $(x,Y)= (x,\{y\})$ and $y$ is singular. Then $R$ is a rectangle containing $x$ and $Q$ is a union of two closed rectangles containing $y$ in their shared boundary. 
\item $(x,Y) = (x, \{y, \sigma^p_2(y), \ldots, \sigma^p_k(y) \})$ and $x$ is nonsingular. Then $R$ is a rectangle containing $x$ and, assuming that, up to renaming the elements in $Y$, $y$ and $\sigma^p_2(y)$ are the two elements of $Y$ on the faces of $\cF_1(x)$ that contains $x$\footnote{Recall that a face of a singular leaf $l$ is an embedded $\bR$ in $l$ that bounds one connected component of $P\smallsetminus l$.}, then $Q$ is a union of two rectangles one containing $y$ in its boundary and the other containing $\sigma^p_2(y)$.
\item $(x,Y) = (x, \{y, \sigma^p_2(y), \ldots, \sigma^p_k(y) \})$ and $x=p$ is singular. Then $R$ is a polygonal neighborhood of $x$ and $Q$ is a union of $k$ rectangles centered at $\{y, \sigma^p_2(y), \ldots, \sigma^p_k(y) \})$.
\end{enumerate} 
Moreover, in every cases, we further require that the saturations by $\cF_1$-\emph{faces} of $R$ and $Q$ are equal. (Note that the saturations by $\cF_1$-leaves are often different.)
We refer to Figure \ref{fig_sing_good_neighborhood} for the schematic explanation of how to define the good neighborhoods.

  \begin{figure}[h]
     \centerline{ \mbox{
\includegraphics[width=11cm]{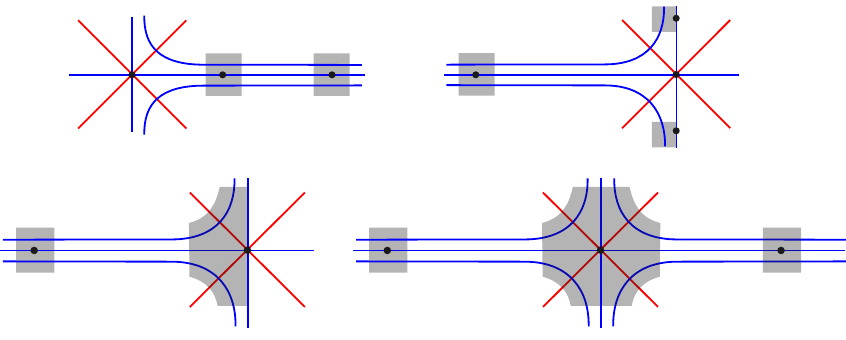}}}
\caption{The good neighborhoods in the singular case}
 \label{fig_sing_good_neighborhood} 
\end{figure}

The proofs of Theorems \ref{thm_compact_singular} and \ref{thm_noncompact} now follows exactly the proof of Theorem \ref{thm_compact} (and the nonsingular case of Theorem \ref{thm_noncompact}) done in Section \ref{sec_nonsingular_main_theorem}. Lemma \ref{lem_torsion_free} remains unchanged, and $W_1^\ast/G$ is now a 3-manifold with a singular foliation with prong singularities; by construction.   

Finally, for Proposition \ref{prop_expansive_1}, the proof follows verbatim, modifying the argument only to replace $y_t$ or $w_t$ with a tuple of points if $x$ or $z$ is on a singular leaf.  This leads to additional (easy) cases to check for the first argument that $z \in \cF_2(x)$, but the definition of good neighborhood has been chosen so the conclusion is nearly immediate.  The only modification required for the argument that $x=z$ is to argue first that $d(\flow^t(p), \flow^{\tau(t)}(q))< \delta$ (when $\delta$ is sufficiently small) implies that they remain in the same trivially foliated flowbox, so no point $u$ in the segment of $\cF_2(x)$ between $x$ and $z$ may have a prong singularity in $\cF_1^>(u)$. See Figure \ref{fig_no_prong_between}.
Thus the construction of an infinite product region proceeds as before, and this completes the proof.  
  \begin{figure}[h]
   \labellist 
  \small\hair 2pt
   \pinlabel $x$ at 45 10
     \pinlabel $y_t$ at 180 10
     \pinlabel $u$ at 45 40
               \pinlabel $z$ at 45 65
     \pinlabel $w_t$ at 180 65
 \endlabellist
     \centerline{ \mbox{
\includegraphics[height=2.5cm]{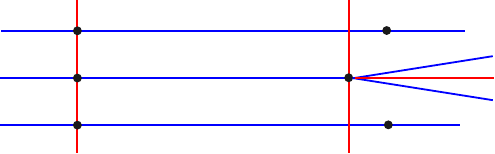}}}
\caption{$\cF_1^>(u)$ cannot contain a prong singularity}
 \label{fig_no_prong_between} 
\end{figure}

\subsection{The closing property and uniform hyperbolicity on singular planes} \label{sec_closing}
To prove Theorem \ref{thm_closing_implies_prop_disc} in the singular case, we need to first describe the right analogue of the 
closing property in the presence of prongs, and then show that we can recover the setting of the proof of Proposition \ref{prop_proper_discontinuity} even with our more general definition of the space $W_1^\ast$.

\begin{definition}[Closing property, general case]\label{def_closing_prop_general_case}
An action $G$ on a bifoliated plane $(P,\cF_1,\cF_2)$ has the closing property if the following is satisfied:
\begin{itemize}
\item  Each nonsingular point $x \in P$ has a neighborhood basis $U_i$ in $P$, with the property that for each $U_i$ there is a smaller neighborhood $V_i \subset U_i$ such that, if $g(V_i) \cap V_i \neq \emptyset$, then $g$ has a fixed point in $U_i$. 
\item Each singular point $p\in P$ has a pair of neighborhood basis $V_i \subset U_i$ in $P$, such that if $C_1$ (resp.~$C_2$) is any connected component of $V_i\smallsetminus \cF_1(p)$ (resp.~$V_i\smallsetminus \cF_2(p)$), and $g(C_1)\cap C_1 \neq \emptyset$ (resp.~$g(C_2)\cap C_2 \neq \emptyset$), then $g$ has a fixed point inside the connected component of either $U_i\smallsetminus \cF_1(p)$ or $U_i\smallsetminus \cF_2(p)$ intersecting $C_1$ (resp.~intersecting $C_2$).
\end{itemize}
\end{definition}
Notice that the orbit space actions induced from a pseudo-Anosov flow do satisfy this, which follows again from the pseudo-Anosov closing lemma, see \cite[Proposition 1.4.7]{BM_book}.

Recall that the bulk of the proof of Theorem \ref{thm_closing_implies_prop_disc} in the nonsingular case 
was done in Proposition \ref{prop_proper_discontinuity}.  To treat the singular case, we prove the following extension of 
Proposition \ref{prop_proper_discontinuity}.  

\begin{proposition}[Proper discontinuity, general case]\label{prop_proper_discontinuity_general}
Suppose $G$ acts on a bifoliated plane $(P,\cF_1,\cF_2)$ (with or without singularities) such that $G$ has the closing property and 
uniformly hyperbolic fixed points. Then $G$ acts properly discontinuously on the space $W_1^\ast$.
\end{proposition}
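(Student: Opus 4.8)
The plan is to reduce Proposition \ref{prop_proper_discontinuity_general} to the already-established nonsingular Proposition \ref{prop_proper_discontinuity} by carefully tracking what changes when prongs are present. Suppose for contradiction that $(x_n, Y_n) \to (x, Y)$ in $W_1^\ast$ and $g_n \in G$ with $g_n(x_n, Y_n) \to (y, Y')$, but $\{g_n\}$ is not eventually constant; pass to a subsequence with the $g_n$ pairwise distinct. The first step is to observe that, by definition of the topology on $W_1^\ast$, after passing to a further subsequence we may choose actual points $x'_n \in Y_n$ converging to some $x' \in Y$, and $y'_n := $ the corresponding point of $g_n Y_n$ converging to some $y' \in Y'$ (using that $g_n$ permutes the rays through the relevant prong equivariantly, so it carries points of $Y_n$ to points of $g_n Y_n$). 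Thus we obtain convergent sequences $x_n \to x$, $x'_n \to x'$ in $P$ with $x' \in \cF_1^>(x) \cup \{$prong rays$\}$, and $g_n x_n \to y$, $g_n x'_n \to y'$. The goal is then to rerun the proof of Proposition \ref{prop_proper_discontinuity} with $(x,x')$ and $(y,y')$, treating the prong cases of good neighborhoods (cases (2)--(4) in the discussion before Figure \ref{fig_sing_good_neighborhood}) exactly as the author indicates for Proposition \ref{prop_expansive_1}: replace individual rectangles by the appropriate unions of rectangles around prongs, and use the ``closing pair'' notion with the general closing property of Definition \ref{def_closing_prop_general_case}, which is stated per connected component of a neighborhood minus a singular leaf.

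The key lemmas feeding the nonsingular argument must be checked to survive. First, Observation \ref{obs_empty_intersection} (hyperbolic fixed points on the same leaf force triviality) is a statement about the foliations alone and is unchanged, noting that a nontrivial element fixing a prong $p$ cannot also fix a distinct point on $\cF_1(p)$ or $\cF_2(p)$ by hyperbolicity. Second, the wandering lemmas (Lemmas \ref{lem_wandering_1} and \ref{lem_wandering_2}) go through using good closing pairs built from the singular good neighborhoods: the general closing property still yields, from $g(C_1) \cap C_1 \neq \emptyset$ for the relevant component $C_1$, a fixed point of $g$ in a controlled component of the larger neighborhood, hence a fixed point near $x$; combined with the analogous conclusion on the second coordinate's neighborhood and Observation \ref{obs_empty_intersection}, we conclude $g = \id$. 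Third, Lemma \ref{lem_fixed_points_shrink} — the statement that $\cF_1(g_n V)$ converges to $\cF_1(y)$ given many hyperbolic-fixed-point containments — uses only the Markov property of $V \cap g_m^{-1} g_n V$ and uniform hyperbolicity; here one should take $V$ to be a \emph{polygonal} rectangle (or a union of rectangles) adapted to the prong, and the ``$\cF_1$-face'' saturation condition from the definition of singular good neighborhoods ensures the Markov dichotomy still holds componentwise. Lemma \ref{lem_disjoint_or_not} is purely set-theoretic and needs no change.

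With these in place, the main body of the proof of Proposition \ref{prop_proper_discontinuity} transfers: one passes to a subsequence with $g_n R_x \cap g_m R_x = \emptyset$ (switching $x \leftrightarrow x'$ and flipping $\cF_1$-orientation as needed, which in the prong case means relabelling which ray of $\cF_1^>(x)$ one works on), deduces the leaf-space projections shrink in $\cF_1$ or $\cF_2$, reduces to the $\cF_1$-shrinking case by the symmetric argument applied to $(y,y')$ and $h_n = g_n^{-1}$, and finally derives a contradiction by showing $g_n(J)$ (the segment between the two neighborhoods along $\cF_1(x)$) converges to a single point $z$ on the $\cF_1$-interval between $y$ and $y'$, then applying Lemma \ref{lem_fixed_points_shrink} at $(y,y')$ to force $\cF_1(g_n^{-1} U) \to \cF_1(x)$, contradicting the $\cF_1(g_n R_x)$-shrinking conclusion. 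The no-$\cF_1$-infinite-product-region hypothesis is not needed here (it was only used for expansivity, not proper discontinuity), so nothing is lost. The main obstacle I anticipate is bookkeeping: precisely stating and verifying the Markov dichotomy for $V \cap g_m^{-1}g_n V$ when $V$ is a polygonal neighborhood of a prong, and making sure the ``segment $J$ along $\cF_1(x)$'' and its limit $z$ still make sense when $x$ itself is a prong (in which case $J$ is really a choice of ray, and $z$ a point on one face of $\cF_1(y)$). Once one fixes the convention that all the neighborhood, segment, and leaf-space arguments are carried out on a single fixed face of each singular leaf — which is legitimate because $G$ preserves leafwise orientation of $\cF_1$ and the $\sigma^p_j$ are equivariant — the singular proof is a faithful transcription of the nonsingular one, and we omit the repeated details, indicating only the modifications above.
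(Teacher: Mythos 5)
Your plan takes a genuinely different (and much heavier) route than the paper, and its first step has a gap. The paper's proof is short because of one trick you are missing: it uses the characterization that proper discontinuity fails iff there are points $(x,X),(y,Y)\in W_1^\ast$, \emph{shrinking neighborhoods} $U^n_x, U^n_y$, and distinct $g_n$ with $g_nU^n_x\cap U^n_y\neq\emptyset$. Since singular points are closed and discrete, there are only countably many singular leaves, so inside each open set $U^n_x$ one may choose the witness $(x_n,\{x'_n\})$ with $x_n$ on a \emph{nonsingular} leaf. Then every $Y_n$ is a singleton, the sequences live in $W_1^>$, and the entire nonsingular Proposition \ref{prop_proper_discontinuity} applies verbatim; the only place singularities can appear is at the limit points $x,x',y,y'$, where Definition \ref{def_closing_prop_general_case} is invoked. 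None of the polygonal-rectangle bookkeeping you anticipate is needed.

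By contrast, you keep the possibly singular witnesses $(x_n,Y_n)$ and propose to rerun the whole argument with polygonal neighborhoods. Beyond the deferred work (the Markov dichotomy for polygonal $V$, the meaning of $J$ and $z$ when $x$ is a prong), your very first reduction is not justified: convergence in $W_1^\ast$ only asks that \emph{some} point of $Y_n$ converge to \emph{some} point of $Y$, and, separately, that some point of $g_nY_n$ converge to some point of $Y'$. These two ``slots'' need not coincide, so you cannot immediately extract a single sequence $x'_n\in Y_n$ with both $x'_n\to x'$ and $g_nx'_n\to y'$; equivariance of the $\sigma^p_j$ does not resolve this, since the identifications are made ad hoc per prong and do not control which ray carries the convergent image. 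This mismatch is exactly what the paper's perturbation-to-nonsingular-leaves step is designed to avoid, and without it your reduction to the nonsingular proposition does not go through as stated.
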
 

\begin{proof}
The action of $G$ on $W_1^\ast$ is properly discontinuous if and only if for all points $(x,X),(y,Y)$ in $W_1^\ast$, there exists neighborhoods $U_x, U_y$ of $(x,X)$ and $(y,Y)$ respectively such that the number of elements $g\in G$ such that $gU_x\cap U_y \neq \emptyset $ is finite (see e.g., \cite[Theorem 11]{Kapovich}). 

Assuming that this is not the case, we can find $(x,X),(y,Y)$ in $W_1^\ast$, a family of shrinking neighborhoods $U^n_x, U^n_y$ of $(x,X)$ and $(y,Y)$ respectively, and a sequence of distinct elements $g_n\in G$ such that $g_nU^n_x \cap U^n_y \neq\emptyset$.

As there are only countably many singular points (since singular points are closed and discrete, thus in finite number in any compact), the set of singular leaves is countable. Thus we may choose points $x_n$ on nonsingular leaves, such that $(x_n,\{x_n'\})\in U_x^n$ and $g_n(x_n,\{x_n'\})\in U_y^n$.

In particular, we get that $(x_n,x_n')$ converges to points $(x,x')$, with $x'\in \cF_1^>(x)$, and $g_n(x_n,x_n')$ converges to $(y,y')$, with $y'\in \cF_1^>(x)$.

The rest of the proof can now be done as in the proof of Proposition \ref{prop_proper_discontinuity}, using the version of the closing property given in Definition \ref{def_closing_prop_general_case} if one of $x,x',y,$ or $y'$ happen to be a prong singularity. 
\end{proof}

Given this result, the end of the proof of Theorem \ref{thm_closing_implies_prop_disc} follows exactly as in the nonsingular case.  

No modifications are needed for Theorem \ref{thm_transitivity_and_discrete_implies_uniform}, since the proof only involves trivially foliated rectangles, thus the singular analogue of Theorem \ref{thm_closing_plus_transitive_implies_prop_disc} follows from the singular version of Theorem \ref{thm_closing_implies_prop_disc} exactly as before.

Finally, we prove Theorem \ref{thm_converse}.  Note that the nonsingular case is already covered by Theorem \ref{thm_transverse_orientable_model_flow}. Here we give an independent argument which covers both cases, by showing that such actions satisfy the hypotheses of Theorem \ref{thm_closing_implies_prop_disc}.  

\begin{proof}[Proof of Theorem \ref{thm_converse}]
Let $\flow$ be a transversally orientable pseudo-Anosov flow on a compact  
$3$-manifold $M$, we want to show that the induced action of $\pi_1(M)$ on the orbit space $(\orb,\cF^s,\cF^u)$ satisfies the conditions of Theorem \ref{thm_closing_implies_prop_disc}. As mentioned above, the fact that $\pi_1(M)\acts (\orb,\cF^s,\cF^u)$ satisfies the closing property is a consequence of the pseudo-Anosov closing lemma (see, e.g., \cite[Propositions 1.4.4 and 1.4.7]{BM_book}).

What remains is to show that this action also has uniformly hyperbolic fixed points: Consider a rectangle $R$ in $\orb$ and lift it to a two dimensional section $\wt R \subset \wt M$ transverse to the lifted flow $\wt\flow$. If there exists a sequence of distinct elements $g_n\in \pi_1(M)$ with a fixed point in $R$, it corresponds to a sequence $\alpha_n$ of periodic orbits in $M$ with longer and longer periods (or the same orbit being traversed more and more), with lifts $\wt\alpha_n$ all intersecting  $\wt R$. Call $x_n = \wt\alpha_n \cap \wt R$. 

Now assume that $\cF^s(g_nR)$ is a decreasing sequence, such that $g_n$ acts as a contraction on the unstable leaf of $x_n$ (seen in $\orb$). 
This means that any point in $\wt \cF^u(g_n x_n)\cap g_n \wt R $ is contained in an orbit of $\wt \cF^u(x_n)\cap \wt R$, where $\wt \cF^s, \wt \cF^u$ denote the stable and unstable foliations of the flow lifted to the universal cover $\wt M$.  What we need to show is that the intersection of the flow saturation of $\wt \cF^u(g_n x_n)\cap g_n \wt R$ with $\wt \cF^u(x_n)\cap \wt R$ converges to a single point as $n$ goes to $\infty$.

Since $x_n$ is on an orbit invariant by $g_n$, there exists $t_n\in \bR$ such that $g_nx_n = \wt\flow^{t_n}(x_n)$. We claim that $t_n\to +\infty$. First, note that $t_n$ must be positive as $g_n$ was chosen so that it acts as a contraction on the unstable leaf of $x_n$. Then $t_n$ must go to infinity as the $g_n$ are all distinct, and there are only finitely many orbits of bounded period for $\flow$ (recall that we assume here that $\flow$ is on a compact manifold).\footnote{Note that assuming $M$ compact is not actually necessary here, all we need is that the length of periodic orbits hitting the \emph{compact} transversal $R$ goes to $\infty$. This holds as long as the constants of hyperbolicity of the flow are uniform.}
 Hence, the length of any nontrivial segment contained in $\wt \cF^u(x_n)\cap \wt R$ goes to infinity (as $n \to \infty$) when flowed by $\wt\flow^{t_n}$, and this implies that the length of the intersection of the flow saturation of $\wt \cF^u(g_n x_n)\cap g_n \wt R$ with $\wt R$ goes to zero.

Similarly, if $\cF^u(g_nR)$ is a decreasing sequence, then we have $g_nx_n = \wt\flow^{-t_n}(x_n)$ with $t_n\to +\infty$, and the same argument applies.  
 Thus the action $\pi_1(M)\acts (\orb,\cF^s,\cF^u)$ has uniformly hyperbolic fixed points, so Theorem \ref{thm_closing_implies_prop_disc} applies and shows the action is properly discontinuous.  
\end{proof}

\section{Application to loom spaces} \label{sec_applications}
In this section, we describe a natural situation where the assumptions of Theorem \ref{thm_closing_implies_prop_disc} hold, which has recently received significant attention.  
This is the study of {\em loom spaces} associated to veering triangulations. 

A {\em veering triangulation} is a special type of ideal triangulation of a cusped hyperbolic 3-manifold, introduced by Agol \cite{Ago11} and Guéritaud \cite{Gue16}.  Subsequent work, by many authors (\cite{AT24,LMT23,FSS,SS23,SS24}), showed that one can associate a veering triangulation to a transitive pseudo-Anosov flow after drilling along orbits to create cusps, and conversely, one can build a transitive pseudo-Anosov flow on a closed 3-manifold from the data of a veering triangulation on a cusped manifold together with a set of filling slopes.

In the work of Schleimer and Segerman, also with Frankel, \cite{FSS,SS23,SS24} a certain type of bifoliated plane called a \emph{loom space} (see \cite[Definition 2.11]{SS24}) appears as an intermediary structure in their correspondence between veering triangulations and transitive pseudo-Anosov flows, modeled after the orbit space of the punctured pseudo-Anosov flow. Such bifoliated planes also naturally appear in the other proofs of that correspondence, but not always under the name loom space.  These proofs all rely in some way on building a pseudo-Anosov flow from the veering triangulation data via the use of branched surfaces. See \cite[Chapter 2]{Tsa23} for a nice exposition of the different proofs.
Here, we give a different approach, showing that 3-manifolds with expansive flows can be constructed directly from the bifoliated plane (using Theorem \ref{thm_closing_implies_prop_disc}); this is Theorem \ref{thm_loom_space}.  Recall the statement:

\loomthm*


The conclusion of the veering triangulation to pseudo-Anosov flow correspondence is stronger than the statement of Theorem \ref{thm_loom_space} -- they also show that, given a set of filling slopes, one can fill the cusps of $M$ to obtain a pseudo-Anosov flow on a closed manifold. We do get that the flow near a cusp looks like a punctured neighborhood of a (possibly singular) periodic orbit, see Remark \ref{rem_structure_cusp}.

To begin, we quickly recall the definition of a loom space (see \cite[Definition 2.11]{SS24}), restated in a slightly different terminology.  
\begin{definition}  \label{def_loom_space}
A loom space is a bifoliated plane $(P,\cF_1,\cF_2)$ with no singularities that satisfies the following two properties:
\begin{enumerate}[label=(\roman*)]
\item \label{item_loom_perfect_fit_nonseparated} If two leaves $l_1\in \cF_1$, $l_2\in \cF_2$ make a perfect fit, then there exists $l_1'\in \cF_1$ non-separated with $l_1$ making a perfect fit with $l_2$, and similarly there exists $l_2'\in \cF_2$ nonseparated with $l_2$ making a perfect fit with $l_1$\footnote{This is a rephrasing of the condition ``For every cusp side $s$ of every cusp rectangle, some initial open interval of $s$ is contained in some rectangle'' of \cite[Definition 2.11]{SS24}.}.
\item \label{item_loom_tetrahedron} Each open rectangle (i.e., a trivially bifoliated open set in $P$) is contained in a \emph{tetrahedron rectangle}.
\end{enumerate}
\end{definition}

A {\em tetrahedron rectangle} is an open rectangle such that its closure in the plane is bounded by four ``sides", each side consisting of the union of two rays of nonseparated leaves, as in Figure \ref{fig_veering}. See \cite[Definition 2.9]{SS24}. Following the terminology of \cite{SS24}, we call an ideal point in the closure of a tetrahedron rectangle a \emph{cusp}.

 \begin{figure}
     \centerline{ \mbox{
\includegraphics[width=5cm]{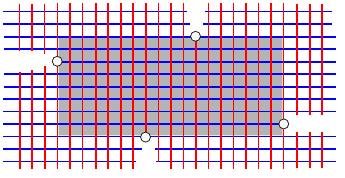}}}
\caption{A tetrahedron rectangle}
 \label{fig_veering} 
\end{figure}

\subsection{Structure of automorphism groups of loom spaces} 
The definition of a loom space constrains both the topology of the foliations and the structure of automorphisms.  
We start by establishing some basic results on this structure. Several of these appear in various forms (occasionally under other assumptions) in the literature. Theorem 15.12 of \cite{BJK25} in particular gives a complete description of the action of an element of $\Aut^+(P)$ for slightly more general bifoliated planes. Since the terminologies are different, and the assumptions not always exactly the same, we provide proofs in order to keep this text self-contained.

\begin{observation}
Let $(P,\cF_1,\cF_2)$ be a loom space.  
Then $(P,\cF_1,\cF_2)$ has no infinite product regions.
\end{observation}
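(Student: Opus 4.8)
The plan is a proof by contradiction that uses both defining properties of a loom space. Suppose $(P,\cF_1,\cF_2)$ contained an $\cF_1$-infinite product region, i.e.\ the image $A=\phi([0,1]\times[0,\infty))$ of a proper embedding carrying the product foliation to $(\cF_1,\cF_2)$ with $\phi(\{p\}\times[0,\infty))\subset\cF_1$ for every $p$. The first step is to pass to the relative interior $U:=\phi\big((0,1)\times(0,\infty)\big)$. This is a trivially bifoliated open subset of $P$, i.e.\ an open rectangle in the sense of Definition~\ref{def_loom_space}, so property~\ref{item_loom_tetrahedron} of that definition puts $U$ inside a tetrahedron rectangle $T$.

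The second step records that $U$ still contains proper $\cF_1$-rays sitting inside $T$: for $s\in(0,1)$ the ray $\rho_s:=\phi(\{s\}\times[0,\infty))\subset U\subset T$ is a properly embedded ray of $P$ contained in a single leaf $l_s$ of $\cF_1$, and distinct $s$ give distinct $l_s$ (an $\cF_2$-leaf meets an $\cF_1$-leaf at most once, so the transverse arc $\phi([s,s']\times\{0\})$ forbids $\rho_s,\rho_{s'}$ from lying on one leaf). Writing $T$ as a product of its leaf spaces, $l_s\cap T$ is the whole ``long'' $\cF_1$-leaf of $T$ through $\rho_s$, so $l_s\cap T$ has noncompact closure in $P$. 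Thus the whole argument reduces to the structural claim that a leaf of $\cF_1$ meets a tetrahedron rectangle in a \emph{relatively compact} arc: granting that, $l_s\cap T\supset\rho_s$ would be relatively compact and noncompact at once, a contradiction, and the Observation follows.

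That structural claim is where property~\ref{item_loom_perfect_fit_nonseparated} is needed, and it is the step I expect to be the main obstacle. An $\cF_1$-leaf $l$ meeting $T$ runs, at each end of $l\cap T$, toward the frontier of $T$; that frontier is a union of leaf-rays (the four sides) and ideal points (the cusps), so $l\cap T$ can fail to be relatively compact only by escaping in $P$ toward a cusp $c$ of $T$. At such an ideal point a leaf-ray of $\cF_1$ and a leaf-ray of $\cF_2$ in the frontier of $T$ come together, i.e.\ make a perfect fit, so property~\ref{item_loom_perfect_fit_nonseparated} supplies nonseparated partner leaves whose rays also run into $c$; these partners cut across the part of $T$ near $c$, and since $T$ is trivially bifoliated — its $\cF_1$- and $\cF_2$-leaves form Hausdorff interval leaf spaces, with no nonseparated pairs — no leaf of $T$ can actually reach $c$. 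Hence $l\cap T$ has both endpoints in $P$ and is relatively compact. (The relative compactness can also be cited directly from \cite{SS24}; I have sketched the loom-space reason because that perfect-fit axiom is otherwise unused here.) Making the ``cut across / cannot reach $c$'' claim rigorous, i.e.\ pinning down the local picture at a cusp of a tetrahedron rectangle, is the real work.
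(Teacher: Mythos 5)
Your overall route is the same as the paper's (whose entire proof is the one-line remark that the interior of an infinite product region is an open rectangle which cannot lie in any tetrahedron rectangle), and your setup is fine: $U$ is an open rectangle, condition (ii) puts it in a tetrahedron rectangle $T$, distinct $s$ give distinct leaves $l_s$, and $l_s\cap T$ is a full plaque of $T$ containing a properly embedded ray. The problem is the ``structural claim'' you correctly flag as the real work: it is \emph{false} that every leaf of $\cF_1$ meets a tetrahedron rectangle in a relatively compact arc. Your local picture at a cusp is wrong. A cusp $c$ in the middle of an $\cF_2$-side of $T$ is where two rays of the \emph{same} foliation $\cF_2$, lying on nonseparated leaves $\lambda,\lambda'$, converge; the leaves making perfect fits with $\lambda$ and $\lambda'$ at $c$ are not part of the frontier of $T$ --- one of them is precisely the $\cF_1$-leaf of $T$ at the ``height'' of $c$, which crosses the interior of $T$ and escapes to infinity toward $c$ without ever hitting either side-ray. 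So there genuinely are $\cF_1$-leaves (one per cusp on an $\cF_2$-side, hence at most two) whose intersection with $T$ is unbounded, and the argument that ``no leaf of $T$ can reach $c$ because the internal leaf spaces of $T$ are Hausdorff'' is a non sequitur: the unbounded plaque is a single ordinary point of the interval leaf space of $\cF_1|_T$ and creates no nonseparated pair there.

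The repair is short and is the counting you are missing: an end of $l_s\cap T$ that escapes every compact set must escape toward one of the two cusps on the $\cF_2$-sides of $T$, and for each such cusp there is exactly one $\cF_1$-level of $T$ whose plaque does so. Hence at most two of the leaves $l_s$ can have non--relatively-compact intersection with $T$, while the infinite product region forces this for uncountably many distinct $l_s$ --- contradiction. (Equivalently, just choose $s$ away from the two exceptional levels before running your argument.) With that replacement for the false claim, your proof is correct and matches the paper's intended justification.
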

 This is because the interior of an infinite product region is an open rectangle, which by definition cannot be contained in any tetrahedron rectangle.

\begin{observation}\label{obs_no_double_perfect_fits}
Let $(P,\cF_1,\cF_2)$ be a loom space. If a leaf $l_1$ makes a perfect fit with a leaf $l_2$, then there are no leaves making a perfect fit with the other end of $l_1$. In other words, no leaf can have perfect fits on both of its ends.
\end{observation}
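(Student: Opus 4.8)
The plan is to contradict the Observation just proved, that a loom space has no infinite product regions. Suppose, for contradiction, that $l_1 \in \cF_1$ makes a perfect fit with $l_2 \in \cF_2$ along one of its ends and with $l_3 \in \cF_2$ along its other end. (If $l_1$ is instead a leaf of $\cF_2$ the argument is identical after relabeling, so this case suffices.) The first step is to arrange that the two perfect-fit product regions lie in the same component of $P \smallsetminus l_1$. If they do not, I would invoke loom property \ref{item_loom_perfect_fit_nonseparated}: the nonseparated companion $l_2'$ of $l_2$ also makes a perfect fit with $l_1$, and --- via the standard dictionary between perfect fits and nonseparation --- it does so at the \emph{same} cusp but with its product region on the opposite side of $l_1$ (concretely, the $\cF_2$-leaves crossing the relevant ray of $l_1$ near the cusp converge, on the far side of $l_1$, precisely onto a ray of $l_2'$). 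Replacing $l_2$ and/or $l_3$ by these companions as needed, I may assume both perfect-fit regions lie on a common side $H^+$ of $l_1$.

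Next, let $\Sigma$ be the connected component of $H^+ \smallsetminus (l_2 \cup l_3)$ whose closure contains $l_1$; the claim is that $\Sigma \cup l_1$ is an $\cF_2$-infinite product region. The key points: (a) every leaf of $\cF_2$ meeting $l_1$ meets it in exactly one point (two leaves of transverse foliations of a plane intersect at most once), and the ray of that leaf lying in $H^+$ cannot cross $l_2$ or $l_3$ (leaves of a single foliation are disjoint), hence is a properly embedded ray contained in $\Sigma$; (b) as the intersection point runs to the two ends of $l_1$, these rays converge respectively to a ray of $l_2$ and a ray of $l_3$, and, since a perfect fit uses an \emph{end} of $l_1$ and $l_1$ has only two ends, no further $\cF_2$-leaf can be trapped strictly between $l_2$ and $l_3$ while failing to meet $l_1$ --- so these rays sweep out $\Sigma$ with no gaps, foliating it as a family parametrized by $l_1 \cong \bR$; (c) transversality of $\cF_1$ then upgrades this to a homeomorphism $\Sigma \cup l_1 \cong \bR \times [0,\infty)$ carrying the rays $\{p\} \times [0,\infty)$ into $\cF_2$, and restricting to a substrip $[0,1]\times[0,\infty)$ gives the required embedding; (d) this embedding is proper, since a sequence escaping compacts of the substrip either has its $l_1$-coordinate running to an (ideal) end of $l_1$ or runs out the end of one of the properly embedded $\cF_2$-rays, hence escapes compacts of $P$. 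This contradicts the Observation, so $l_1$ has at most one end involved in a perfect fit.

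The main obstacle is step (b)--(c): verifying that the $\cF_2$-rays issuing from $l_1$ genuinely foliate all of $\Sigma$ with no "gap" leaf, and that the resulting product structure carries no holonomy. Both should reduce to the fact that no $\cF_2$-leaf strictly between the walls $l_2$ and $l_3$ can fail to cross $l_1$, since such a leaf would be nonseparated from, or in a perfect-fit relation with, the crossing leaves, and this would force a perfect fit of $l_1$ along a third "end" --- impossible. The side-swap sub-claim of the first paragraph is likewise not purely formal and needs the perfect-fit/nonseparation dictionary spelled out. Since the whole argument uses only transversality, the plane being $\bR^2$, loom property \ref{item_loom_perfect_fit_nonseparated}, and the "no infinite product regions" consequence of property \ref{item_loom_tetrahedron}, it applies verbatim with the roles of $\cF_1$ and $\cF_2$ exchanged.
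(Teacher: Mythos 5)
Your overall strategy --- use condition \ref{item_loom_perfect_fit_nonseparated} to put both perfect-fit regions on the same side of $l_1$, then exhibit an $\cF_2$-infinite product region over $l_1$ --- is a genuinely different route from the paper's, which instead produces an open rectangle with two cusps on a single side and contradicts condition \ref{item_loom_tetrahedron} directly (deferring details to \cite[Lemma 2.25]{SS24}). The side-swap step is common to both arguments and is fine modulo the standard perfect-fit/nonseparation dictionary. The problem is your step (b)--(c).

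The gap is the product structure on $U_J=\bigcup_{p\in J}\cF_2^{>}(p)$, and your proposed reduction does not close it. First, a ``gap leaf'' $m\subset\Sigma$ missing $l_1$ does not force a perfect fit of $l_1$ along a third end: since $m$ cannot separate the rays $\cF_2^{>}(p)$ from one another (they all start on $l_1$, which lies in one component of $P\smallsetminus m$), the natural boundary phenomenon is instead that for some \emph{interior} point $p_\infty\in l_1$ the leaves $\cF_2(p)$, $p\to p_\infty$, accumulate on a second leaf $m'$ nonseparated from $\cF_2(p_\infty)$; equivalently, some $\cF_1$-leaf $u$ above $l_1$ makes a perfect fit with $\cF_2(p_\infty)$. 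This involves $l_1$ nowhere, so nothing about the two ends of $l_1$ rules it out, and it is exactly the configuration that destroys the product structure: the $\cF_1$-leaves through $\cF_2^{>}(p_\infty)$ beyond the cusp with $u$ no longer reach $\cF_2^{>}(p)$ for nearby $p$. Second, even granting ``no gap leaves,'' transversality does not upgrade the union of rays to a product: the failure mode is an $\cF_1$-leaf escaping (via a perfect fit with one of the rays, or via non-surjective holonomy between rays), not a missing $\cF_2$-leaf. Ruling out such perfect fits between interior leaves $\cF_2(p)$, $p\in l_1$, and $\cF_1$-leaves above $l_1$ is precisely the nontrivial content hidden in \cite[Lemma 2.25]{SS24}, and it is needed equally by the paper's two-cusps argument; your write-up asserts it rather than proves it. Until that step is supplied, the infinite product region has not been constructed. (Two smaller points: the degenerate case $l_2=l_3$ is not addressed, and the properness claim in (d) is fine only once the product structure is established.)
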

By ``perfect fit with an end" we mean, as is standard, a perfect fit with a (or equivalently, any) ray defining that end.  
The proof of this observation follows immediately from conditions \ref{item_loom_perfect_fit_nonseparated} and \ref{item_loom_tetrahedron} of Definition \ref{def_loom_space}: If a leaf made a perfect fit on both of its ends, then condition \ref{item_loom_perfect_fit_nonseparated} would force the existence of a rectangle having two cusps on a single side, contradicting condition \ref{item_loom_tetrahedron}. See \cite[Lemma 2.25]{SS24} for the details.

As a direct consequence of the two previous observations, we also have
 
 \begin{observation} \label{obs_boundary_has_pf} 
Suppose $r, r'$ are rays of leaves of $\cF_1$.  Then $\cF_2(r) \cap \cF_2(r')$, if nonempty, is bounded on one side either by a leaf making a perfect fit with $r$ or $r'$ (but not both), or by a pair of two nonseparated leaves, one intersecting $r$ and one intersecting $r'$, and both making a perfect fit with a common ray between $r$ and $r'$. See Figure \ref{fig_observation_boundary}.
\end{observation}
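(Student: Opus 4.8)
The plan is to analyze the boundary structure of the region $\cF_2(r) \cap \cF_2(r')$ (the union of all $\cF_2$-leaves meeting both $r$ and $r'$) by tracking what happens as one moves along $r$ toward its extreme position in this region. Concretely, first I would set up coordinates: since $r, r'$ are rays of $\cF_1$-leaves, pick a point on $r$ and a point on $r'$ lying on a common $\cF_2$-leaf (this exists precisely because the intersection is nonempty), and parametrize the family of $\cF_2$-leaves between them. The set of $\cF_2$-leaves meeting both $r$ and $r'$ is an interval in the leaf space of $\cF_2$, open or half-open; I would examine the boundary leaf (or pair of non-separated boundary leaves) on the ``far'' side, i.e., the side away from the common point. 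Call this side $S$.

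The key case division is on the boundary behavior at $S$. On the $r$-side, either $r$ continues to be met by leaves arbitrarily close to the boundary, or the leaves stop meeting $r$ before reaching it; symmetrically for $r'$. If the boundary on the $r$-side is realized by an actual leaf $l_r$ of $\cF_1$ that $r$ lies on but which is ``exited'' by the $\cF_2$-leaves — more precisely, if the limiting $\cF_2$-leaf fails to meet $r$ but the limit configuration has a leaf whose ray abuts $r$ without crossing — this is exactly a perfect fit. So the boundary on each side is governed by a leaf making a perfect fit with the corresponding ray, or by the rays $r$, $r'$ themselves limiting onto a common ideal situation. I would then invoke Observation \ref{obs_no_double_perfect_fits}: a single boundary leaf cannot simultaneously make perfect fits forcing contradictory configurations, which rules out the ``both $r$ and $r'$'' possibility and forces the dichotomy stated. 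Finally, condition \ref{item_loom_perfect_fit_nonseparated} of Definition \ref{def_loom_space} supplies the non-separated partner leaf: whenever a perfect fit occurs at the boundary, there is a non-separated leaf making a perfect fit with a common ray, which yields precisely the second alternative in the statement (a pair of non-separated leaves, one meeting $r$, one meeting $r'$, both making a perfect fit with a common ray between $r$ and $r'$).

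The main obstacle I anticipate is bookkeeping the case where $r$ and $r'$ are rays of the \emph{same} leaf of $\cF_1$, or where the region $\cF_2(r)\cap \cF_2(r')$ is bounded on the relevant side directly by $r$ or $r'$ rather than by a transverse leaf — one must be careful that ``bounded by a leaf making a perfect fit with $r$ or $r'$'' correctly captures the degenerate configurations, and that the perfect-fit leaf is genuinely distinct from $r,r'$. I would handle this by reducing to the behavior of the two ``extremal'' $\cF_1$-leaf positions: the supremum position of leaves meeting $r$ within the region and the analogous one for $r'$; if these coincide one gets a single perfect-fit leaf, if they differ one is in the non-separated-pair case, and Observation \ref{obs_no_double_perfect_fits} guarantees these are the only options. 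The rest is the routine verification, with reference to Figure \ref{fig_observation_boundary}, that the picture is as claimed, so I would not belabor it.
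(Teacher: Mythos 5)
The paper offers no written proof of this observation: it is stated as ``a direct consequence of the two previous observations,'' namely the absence of infinite product regions in a loom space and Observation \ref{obs_no_double_perfect_fits}. Your sketch captures part of the intended argument, but it has a genuine gap: you never invoke the no-infinite-product-region property, and it is indispensable. Parametrize the $\cF_2$-leaves $l_t$ meeting both $r$ and $r'$ and let $t$ increase toward the far end of this interval of leaves. Each of the sequences $l_t\cap r$ and $l_t\cap r'$ either converges to a point of the corresponding leaf or escapes to infinity along the ray. If both escape, the region swept out between $r$ and $r'$ is a properly embedded $[0,1]\times[0,\infty)$ with product foliation, i.e.\ an infinite product region, and then the set $\cF_2(r)\cap\cF_2(r')$ is not bounded on that side by any leaf whatsoever, so the conclusion fails. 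Your case analysis presupposes that a boundary leaf $S$ exists, which is exactly what this case threatens; only the loom-space axiom forbidding infinite product regions rules it out.

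Second, the dichotomy is not produced the way your last step describes. The single-leaf alternative is the case where exactly one sequence converges (say $l_t\cap r\to p$): then $\cF_2(p)$ is the boundary leaf, it crosses $r$, and it makes a perfect fit with $r'$; ``not both'' follows from Observation \ref{obs_no_double_perfect_fits} because perfect fits with $r$ and with $r'$ would sit at opposite ends of that boundary leaf. The nonseparated-pair alternative is the case where both sequences converge, necessarily to points on two \emph{distinct} leaves $m,m'$ (a single leaf crossing both rays at interior points could not be the boundary), which are then nonseparated limits of the $l_t$; the common ray they both fit with lies strictly between $r$ and $r'$, in the gap separating $m$ from $m'$. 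Condition \ref{item_loom_perfect_fit_nonseparated} does not convert the first alternative into the second: the nonseparated partner it supplies for a perfect fit with $r$ sits on the far side of $\cF_1(r)$, outside the region under consideration. One also needs Observation \ref{obs_no_double_perfect_fits} once more to see that the limit of the $l_t$ cannot contain three or more nonseparated leaves, since a middle one would carry perfect fits at both of its ends. As written, your final step collapses the two alternatives into one and establishes neither.
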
 

\begin{figure}[h]
   \labellist 
  \small\hair 2pt
     \pinlabel $r$ at 10 5
     \pinlabel $r'$ at 10 57
     \pinlabel $r$ at 175 5
     \pinlabel $r'$ at 175 57
 \endlabellist
     \centerline{ \mbox{
\includegraphics[width=8cm]{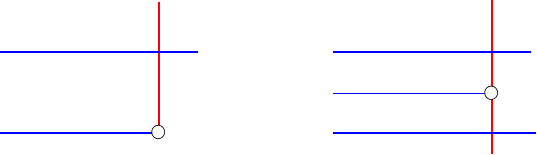}}}
\caption{The two cases, up to symmetry, for the boundary of $\cF_2(r) \cap \cF_2(r')$.}
 \label{fig_observation_boundary} 
\end{figure}

 We will show in Proposition \ref{prop_one_fixed_point} that any non-trivial element of $g$ admits at most one fixed point in the plane. In preparation for this, we first show the following:
\begin{lemma}\label{lem_points_on_nonseparated}
Let $(P,\cF_1,\cF_2)$ be a loom space and $g\in \Aut^+(P)$ nontrivial. 
Suppose $l_1$ and $l_2$ make a perfect fit.  If $g$ fixes a point on $l_1$, then $g$ fixes two distinct points on some leaf. Conversely, if $g$ fixes two points on a single leaf, then there exist leaves $l_1$ and $l_2$ making a perfect fit, such that $g$ fixes a point on $l_1$.
\end{lemma}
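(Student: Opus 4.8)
The plan is to prove both implications by extracting perfect-fit configurations from fixed-point data, using the loom space axioms (especially Observations \ref{obs_no_double_perfect_fits} and \ref{obs_boundary_has_pf}) together with the fact that $g$ preserves both foliations and all orientations.

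First I would prove the forward direction. Suppose $l_1, l_2$ make a perfect fit and $g$ fixes a point $x \in l_1$. Since $g \in \Aut^+(P)$ and $g(x)=x$, $g$ preserves each of the two rays of $l_1$ at $x$, and preserves the leaf $l_2$ of $\cF_2$ that makes a perfect fit with one of those rays, say $r$ (the perfect fit of $l_1, l_2$ picks out a definite ray of $l_1$). Since $g$ preserves $r$ and preserves perfect-fit relations, $g(l_2)$ makes a perfect fit with $r$ as well; but $l_2$ is a leaf of $\cF_2$ and perfect fits with a given ray are unique on each side, so $g(l_2) = l_2$. Thus $g$ fixes the leaf $l_2$ setwise. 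Now $g$ acts on $l_2 \cong \bR$ preserving orientation. If this action is trivial we already have infinitely many fixed points on $l_2$ and are done; otherwise $g$ acts on $l_2$ as an orientation-preserving homeomorphism of $\bR$ fixing no point, hence translating toward one end of $l_2$. But $g$ also fixes $l_1$, and $l_1 \cap l_2 = \emptyset$ (they make a perfect fit, not an intersection). The point is to use the perfect fit corner: by Observation \ref{obs_no_double_perfect_fits}, $l_2$ has no perfect fit on its other end, and by condition \ref{item_loom_perfect_fit_nonseparated} there is a leaf $l_1'$ of $\cF_1$ nonseparated from $l_1$ making a perfect fit with $l_2$. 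The pair $\{l_1, l_1'\}$ being nonseparated and $g$-invariant (as a set, since $g$ fixes $l_1$) forces $g(l_1') = l_1'$ too. Then $g$ fixes both $l_1$ and $l_1'$, and fixes a point on $l_1$; flowing this across the perfect-fit corner, or rather: since $g$ translates along $l_2$ toward the $l_1$-side, consider the point $\cF_2(x) \cap l_2$ is empty, so instead look at the other end — I would argue that $g$ translating along $l_2$ is incompatible with $g$ fixing $x \in l_1$ because the rays of $l_1$ and $l_1'$ together with $l_2$ bound a region whose structure (a "cusp" configuration, cf. Figure \ref{fig_veering}) is preserved by $g$, and the only orientation-preserving homeomorphism of this configuration fixing a point of $l_1$ must fix the whole ideal cusp point, hence fix $l_2$ pointwise as well — giving two fixed points on $l_2$. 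Alternatively, and more cleanly: $g$ fixes $l_1$ and $l_1'$; if $g$ has no second fixed point on any leaf, then in particular $g$ acts freely on $l_1'$, translating toward one end; but the perfect fit of both $l_1$ and $l_1'$ with $l_2$ pins the relevant ends of $l_1$ and $l_1'$ to the same cusp of $l_2$, and $g$ moves the fixed point $x \in l_1$ away from that cusp while being forced to move points of $l_1'$ consistently — tracing $\cF_2$-leaves through moved points produces a contradiction with $x$ being fixed. I would make this precise with a picture.

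For the converse, suppose $g$ fixes two distinct points $x, y$ on a single leaf, say $\ell \in \cF_1$. Then $g$ fixes the $\cF_2$-leaves $\cF_2(x)$ and $\cF_2(y)$ (orientation preservation again). Consider the strip $\cF_2(x) \cap \cF_2(y)$ — more precisely the region of $P$ between $\cF_2(x)$ and $\cF_2(y)$ along the direction of $\ell$, which is $g$-invariant and contains the $\cF_1$-segment of $\ell$ between $x$ and $y$. By Observation \ref{obs_boundary_has_pf}, applied to suitable rays of $\cF_2(x)$ and $\cF_2(y)$ (playing the role of $r, r'$ there, after swapping foliations), each side of this region where the two $\cF_2$-leaves "close up" is bounded either by a leaf of $\cF_1$ making a perfect fit with one of $\cF_2(x), \cF_2(y)$, or by a nonseparated pair making perfect fits with a common $\cF_1$-ray between them. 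In the first case we directly get a leaf $l_1$ (the bounding $\cF_1$-leaf) making a perfect fit with $l_2 = \cF_2(x)$ or $\cF_2(y)$; since this bounding leaf is canonically associated to the $g$-invariant region, it is $g$-invariant, so $g$ fixes a leaf making a perfect fit with a $g$-fixed leaf, but I still must produce a \emph{fixed point} on $l_1$ — this follows because $l_1$ meets one of $\cF_2(x)$ or $\cF_2(y)$ in a point (they are transverse, not making a perfect fit on that side), actually no: I should instead argue $g$ fixes the point $l_1 \cap \ell'$ for an appropriate $g$-fixed transversal, or note $g$ acting on $l_1 \cong \bR$ with the perfect-fit end pinned must have a fixed point there. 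In the second (nonseparated pair) case, both leaves of the pair make a perfect fit with a common $\cF_1$-ray $r_0$, and $r_0$ lies between rays of $\cF_2(x)$ and $\cF_2(y)$; again by $g$-invariance of the whole configuration $g$ fixes $r_0$ and its leaf, and $g$ fixes the two nonseparated leaves (as a pair, hence individually since they're the two distinct members), each of which makes a perfect fit with a $g$-invariant $\cF_2$-leaf — wait, here the perfect fit is with an $\cF_1$-leaf, so I should take $l_2 = $ that common $\cF_1$-leaf through $r_0$ and $l_1 = $ one of the nonseparated pair; but then I need $g$ to fix a point on $l_1$, which I get from $g$ fixing $l_1$ and the pinned perfect-fit end as before. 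I would organize this as: in all cases, produce a $g$-invariant leaf $l_1$ with a $g$-invariant perfect-fit partner $l_2$, then show $g$ fixes a point on $l_1$ using that $g$ acts on $l_1 \cong \bR$ orientation-preservingly and must fix the end involved in the perfect fit (because that end corresponds to a $g$-fixed cusp ideal point), hence by a pigeonhole/intermediate-value argument fixes an interior point too — or, most simply, $l_1$ already passes through one of the given fixed points $x$ or $y$ in a subcase, and in the remaining subcase $l_1 \cap \cF_2(x)$ or $l_1 \cap \cF_2(y)$ is a $g$-fixed point.

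\textbf{Main obstacle.} The hard part will be the bookkeeping of \emph{which} ray makes the perfect fit and on which side leaves close up, i.e., carefully using Observations \ref{obs_no_double_perfect_fits} and \ref{obs_boundary_has_pf} to land on a configuration that is genuinely $g$-invariant and pins down a fixed point, rather than merely a $g$-invariant leaf. The key technical point throughout is that orientation preservation plus $g$-invariance of an ideal cusp point (an ideal point canonically determined by a $g$-invariant perfect-fit configuration) forces $g$ to fix the corresponding leaf-end, and from a fixed leaf-end of an $\bR$-leaf one extracts an actual fixed point. I expect the converse direction to be the more delicate one, because extracting the perfect fit from "two fixed points on a leaf" requires analyzing the boundary of $\cF_2(x) \cap \cF_2(y)$ and there are the two cases of Observation \ref{obs_boundary_has_pf} to handle, plus the possibility that this intersection is all of the relevant half-plane (no closing up), which I would need to rule out using the no-infinite-product-region property (the first Observation of this subsection).
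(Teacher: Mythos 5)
Your converse direction is essentially the paper's argument: analyze the boundary of the $\cF_1$-saturated band between $\cF_2(x)$ and $\cF_2(y)$ via Observation \ref{obs_boundary_has_pf}, note that the whole configuration is $g$-invariant, and read off a fixed point on a bounding leaf that makes a perfect fit. That part is sound, modulo tightening the write-up (you mean the band $\cF_1(\cF_2(x))\cap\cF_1(\cF_2(y))$, not the literal intersection "$\cF_2(x)\cap\cF_2(y)$", and the reason $g$ fixes each member of a bounding nonseparated pair individually is that one meets the $g$-invariant leaf $\cF_2(x)$ and the other meets $\cF_2(y)$, so they cannot be swapped).

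The forward direction, however, has a genuine gap. You try to produce the second fixed point on $l_2$ itself by ruling out that $g$ translates along $l_2$, but you never actually rule this out. The inference ``$g$ fixes the ideal cusp point, hence fixes $l_2$ pointwise'' is false: $g$ certainly fixes the cusp (it preserves $l_1$, its rays, and hence $l_2$), but fixing an ideal end of a leaf does not force fixing the leaf pointwise; and your ``cleaner alternative'' ends with an appeal to a picture without locating a contradiction. In fact, tracing $\cF_1$-leaves through points of $l_2$ near the cusp back to $\cF_2(x)$ is $g$-equivariant but perfectly compatible with $g$ translating along $l_2$ (it only says $x$ is topologically attracting or repelling on the corresponding ray of $\cF_2(x)$), so no contradiction falls out of the cusp configuration alone. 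The paper instead finds the second fixed point on $\cF_2(x)$, not on $l_2$: choose the ray $r_2$ of $\cF_2(x)$ at $x$ that makes \emph{no} perfect fit (Observation \ref{obs_no_double_perfect_fits} guarantees one exists), replace $l_2$ by its nonseparated partner if needed so that $\cF_1$-leaves cross both $r_2$ and $l_2$, and consider $I=\cF_1(r_2)\cap\cF_1(l_2)$. By Observation \ref{obs_boundary_has_pf} its far boundary cannot be a leaf making a perfect fit with $r_2$ (by the choice of $r_2$) nor with $l_2$ (its free end makes no perfect fit), so it must be a $g$-invariant nonseparated pair $l_1',l_1''$ with $l_1'$ meeting $r_2$; then $l_1'\cap r_2$ is a second fixed point of $g$ on $\cF_2(x)$. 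Without some construction of this kind that actually exhibits the second fixed point, your forward implication is unproven.
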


\begin{proof}
We start by proving the first direction. To fix notation, assume that $l_1\in \cF_1$, the other case being symmetrical.

Assume that $g$ fixes a point $x\in l_1$. By Observation \ref{obs_no_double_perfect_fits}, at least one of the two rays of $\cF_2(x)\smallsetminus\{x\}$ will not make any perfect fit. Call $r_2$ such a ray. Up to replacing $l_2$ by the other leaf making a perfect fit with $l_1$ (in the notation of the definition of loom space, this is $l_2'$), we can assume that there exist leaves of $\cF_1$ intersecting both $r_2$ and $l_2$.

The set 
$I:=\cF_1(r_2) \cap \cF_1(l_2)$
is bounded on one side by $l_1$, and by Observation \ref{obs_boundary_has_pf}, its other boundary contains leaves which either make 
a perfect fit with $r_2$ (which is impossible) or $l_2$ (which is impossible since $l_2$ only makes a perfect fit with $l_1$), or 
is a union of two nonseparated leaves $l_1',l_1''$ with $l_1'$ intersecting $r_2$ and $l_1''$ intersecting $l_2$.  
Since $r_2$ and $l_2$ are $g$-invariant, $I$ and its boundary are also $g$-invariant. 
Thus the intersection point of $r_2$ and $l_1'$ is fixed by $g$ giving a second fixed point of $g$ on $\cF_2(x)$. See Figure \ref{fig_two_fixed_points}.
\begin{figure}[h]
 \labellist 
  \small\hair 2pt
     \pinlabel $x$ at 40 5
     \pinlabel $l_1$ at 0 5
     \pinlabel $l_2$ at 120 70
     \pinlabel $r_2$ at 45 70
     \pinlabel $l_1'$ at 0 50
     \pinlabel $l_1''$ at 170 50
 \endlabellist
     \centerline{ \mbox{
\includegraphics[width=5cm]{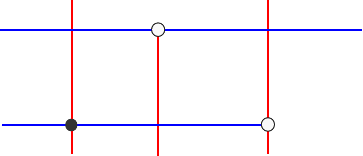}}}
\caption{A second fixed point of $g$.}
 \label{fig_two_fixed_points} 
\end{figure}

Now we show the second direction. Suppose that $g$ fixes two distinct points $x,y$ on a common leaf, without loss of generality in $\cF_1$. If one of $\cF_2(x)$ or $\cF_2(y)$ makes a perfect fit, there is nothing to prove. So we assume that neither makes perfect fits.
Let $r_2$, $r_2'$ denote the positive (with respect to the leafwise orientation) rays of $x$ and $y$ in $\cF_2$, respectively and let $I = \cF_1(r_2) \cap \cF_1(r_2')$.  This is nonempty and bounded on one side by $\cF_1(x) = \cF_1(y)$.

By Observation \ref{obs_boundary_has_pf}, the other boundary contains a leaf $l$ intersecting either $\cF_2(x)$ or $\cF_2(y)$ and making a perfect fit with the other or with some leaf in between.  
Since $r_2$, $r_2'$ and $I$ are $g$-invariant, the boundary leaves are as well.  Thus $\cF_2(x) \cap l$ or $\cF_2(y) \cap l$, whichever 
is nonempty, is a point fixed by $g$ on a leaf $l$ making a perfect fit.  
\end{proof}

\begin{proposition} \label{prop_one_fixed_point}
Let $(P,\cF_1,\cF_2)$ be a loom space, and suppose $G<\Aut(P)$.  Then any nontrivial $g\in G$ has at most one fixed point on any given leaf.   Equivalently, no leaf containing a fixed point of some nontrivial element of $g$ makes a perfect fit with any leaf. 
\end{proposition}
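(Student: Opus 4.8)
The plan is to derive the proposition from Lemma~\ref{lem_points_on_nonseparated} together with the tetrahedron-rectangle condition~\ref{item_loom_tetrahedron} of Definition~\ref{def_loom_space}. The equivalence of the two assertions is immediate from Lemma~\ref{lem_points_on_nonseparated}: if a nontrivial $g\in G$ fixed two distinct points on a common leaf, its second direction would produce leaves $l_1,l_2$ making a perfect fit with $g$ fixing a point of $l_1$; and if a leaf $l_1$ carrying a fixed point $x$ of a nontrivial $g$ made a perfect fit, its first direction would produce a second fixed point of $g$ on $\cF_2(x)$. So both assertions reduce to the single statement that no leaf through a fixed point of a nontrivial $g\in G$ makes a perfect fit, which is what I would prove.

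Suppose for contradiction that some nontrivial $g\in G$ fixes a point $x$ on a leaf $l_1$ that makes a perfect fit. Replacing $g$ by a suitable power, which stays nontrivial --- a finite-order automorphism fixing a point $x$ on $l_1$ restricts on $l_1$ to the identity or to the reflection about $x$; the reflection is impossible since by Observation~\ref{obs_no_double_perfect_fits} the leaf $l_1$ makes a perfect fit at only one of its ends, while fixing $l_1$ pointwise forces $g=\id$ by a short connectedness argument --- we may assume $g\in\Aut^+(P)$, so that Lemma~\ref{lem_points_on_nonseparated} applies. Say $l_1\in\cF_1$ makes a perfect fit with $l_2\in\cF_2$ at a cusp $c$. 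By Observation~\ref{obs_no_double_perfect_fits} the perfect fit occurs at a single end of each of $l_1$ and $l_2$, so condition~\ref{item_loom_perfect_fit_nonseparated} yields the surrounding cusp configuration: a leaf $l_1'$ nonseparated from $l_1$ and also making a perfect fit with $l_2$, and a leaf $l_2'$ nonseparated from $l_2$ and also making a perfect fit with $l_1$. Since $g$ fixes $l_1$ it preserves the end of $l_1$ carrying the perfect fit, hence permutes the finite set of leaves making a perfect fit with that end; arguing symmetrically on the $l_2$ side and passing to one further power, we obtain a nontrivial $h=g^N$ fixing each of $c,l_1,l_1',l_2,l_2'$ and therefore preserving the open cusp rectangle $C$ bounded by these leaves.

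The contradiction comes from condition~\ref{item_loom_tetrahedron}: $C$ is trivially bifoliated, so it sits in a tetrahedron rectangle $T$, and one checks that $c$ must be one of the four cusps of $T$, with the four sides of $T$ meeting at $c$ agreeing as germs at $c$ with $l_1,l_1',l_2,l_2'$. As $h$ fixes $c$ and all four of these leaves, $h$ preserves the germ of $T$ at $c$. Reapplying the construction inside the first direction of Lemma~\ref{lem_points_on_nonseparated} (which moves a fixed point on a perfect-fit leaf to a fixed point lying on a boundary leaf of a region $\cF_1(r)\cap\cF_1(l)$, again a perfect-fit leaf), and using that a loom space has no infinite product regions, one produces an infinite family of pairwise distinct leaves, each carrying a fixed point of $h$ and each making a perfect fit, which accumulate inside a compact region. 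A small trivially bifoliated rectangle around an accumulation point lies in a tetrahedron rectangle whose sides would then have to absorb infinitely many of these perfect fits --- impossible, since by Observation~\ref{obs_no_double_perfect_fits} and condition~\ref{item_loom_tetrahedron} each side of a tetrahedron rectangle carries exactly one cusp.

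I expect this last step to be the main obstacle. The equivalence and the cusp bookkeeping of the first two paragraphs are routine; the real work is in turning the local picture around $c$ into an honest contradiction --- in particular, verifying that the engulfing tetrahedron rectangle $T$ genuinely has $c$ as a cusp whose four sides agree as germs at $c$ with $l_1,l_1',l_2,l_2'$, and making precise the production and accumulation of the infinite family of perfect-fit leaves.
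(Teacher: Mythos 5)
Your reduction to the single statement ``no leaf through a fixed point of a nontrivial element makes a perfect fit'' via Lemma~\ref{lem_points_on_nonseparated}, and the cusp bookkeeping producing an $h=g^N$ preserving the configuration $c,l_1,l_1',l_2,l_2'$, are fine in outline. But the proof has a genuine gap exactly where you flag it: the final contradiction is not established, and the route you sketch does not work as stated. First, you give no mechanism forcing the iteratively produced perfect-fit leaves to be pairwise distinct and to \emph{accumulate in a compact region} rather than escape to infinity; the constructions in Lemma~\ref{lem_points_on_nonseparated} and Observation~\ref{obs_boundary_has_pf} move you to boundary leaves of saturations with no a priori control on where they land. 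Second, even granting such an accumulation, the claimed contradiction with condition~\ref{item_loom_tetrahedron} fails: a leaf can pass through a small rectangle $R$ and realize its perfect fit far away, outside any tetrahedron rectangle containing $R$, so the sides of that tetrahedron rectangle are under no obligation to ``absorb'' those cusps. Indeed, accumulation of (countably many) perfect-fit leaves at a point is not in itself forbidden in a loom space, so some further input is needed. A smaller recurring issue: when you reduce to $\Aut^+(P)$ and when you assert that fixing a leaf pointwise forces $g=\id$ ``by connectedness,'' you must still exclude a reflection of $P$ across that leaf (which fixes the leaf pointwise but is nontrivial); connectedness alone does not do this, and the needed input is the absence of axial symmetry of tetrahedron rectangles.

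For comparison, the paper's proof takes a different and more hands-on route: assuming $g$ fixes $x\neq y$ on $l\in\cF_1$ and preserves orientations, it first manufactures a fixed point of $g$ on $l$ strictly between $x$ and $y$ using the boundary structure of $\cF_1(l_x)\cap\cF_1(l_y)$ (Observation~\ref{obs_boundary_has_pf} plus condition~\ref{item_loom_perfect_fit_nonseparated}), concludes that $\Fix(g)\cap l'$ is a closed interval on every leaf, and then propagates: choosing $\cF_2(x)$ with no perfect fits and letting $x_n\to x$ inside the fixed interval, the $g$-invariant boundary leaves of $\cF_1(\cF_2(x))\cap\cF_1(\cF_2(x_n))$ meet $\cF_2(x)$ arbitrarily far out, forcing $\cF_2(x)$ to be pointwise fixed; iterating and using connectedness of $P$ gives $g=\id$. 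The orientation-reversing case is then killed by the observation that a tetrahedron rectangle has no axial symmetry. If you want to salvage your approach, the missing ingredient is precisely such a propagation/monotonicity argument; the ``accumulating cusps'' contradiction would need to be replaced by something that genuinely uses the countability of perfect-fit leaves together with the interval structure of the fixed set.
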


We thank H.~Baik for pointing out that this result as well as some of the consequences follow from the classification of homeomorphisms preserving a {\em veering pair of laminations of $S^1$}, given in \cite[Section 15]{BJK25}.  In order to avoid giving an extensive dictionary between the terminology and assumptions used in \cite{BJK25} and ours, and to keep this work self-contained, we provide standalone proofs here.  

\begin{proof}
Suppose that $g$ fixes two points $x \neq y$ on a leaf $l$.  
For concreteness, assume here that $l \in \cF_1$.

Since $g$ fixes both $x$ and $y$ on $l\in \cF_1$, we must have that $g\in \Aut_1^+(P)$. 
As a first case, suppose $g$ also preserves orientation on the $\cF_2$ leaf space.  We will show that it must be the identity. 

To do that, we first find a fixed point on $l$ between $x$ and $y$.  Let $l_x$ and $l_y$ be the leaves of $\cF_2$ through $x$ and $y$ respectively, and consider the boundary of $\cF_1(l_x) \cap \cF_1(l_y)$. On each side, this boundary consists of a leaf segment making a perfect fit with $l_x$ or $l_y$, or a union of two nonseparated leaves, which are fixed by $g$.  In the latter case, there is a leaf $f$ making a perfect fit with each, fixed by $g$, and intersecting $l$ between $x$ and $y$.  This gives the desired fixed point.  
In the second case, up to switching labels assume the boundary leaf $b$ makes a perfect fit with $l_x$.  Then there is a leaf $l_x'$ nonseparated with $l_x$ making a perfect fit with $b$, and we can consider the boundary of $\cF_1(l'_x) \cap \cF_1(l_y)$.  Since $l_2''$ has no perfect fit on its other end, this set is bounded by a $g$-invariant pair of nonseparated leaves making a perfect fit with a leaf between $x$ and $y$ (as desired), or by a leaf making a perfect fit with $l_y$ and a leaf $l_y'$ nonseparated with $l_y$.  In this last case, consider finally $\cF_1(l'_x) \cap \cF_1(l_y')$, which is necessarily then bounded by a $g$-invariant pair of leaves making a perfect fit with a ($g$-invariant) leaf of $\cF_2$ meeting $l$ between $x$ and $y$. See Figure \ref{fig_new_argument}.  

\begin{figure}[h]
 \labellist 
  \small\hair 2pt
    \pinlabel $x$ at 40 5
     \pinlabel $z$ at 80 5
     \pinlabel $y$ at 122 4
          \pinlabel $b$ at 60 65
     \pinlabel $l_x$ at 25 40
     \pinlabel $l_x'$ at 25 90
     \pinlabel $l_y$ at 135 45
     \pinlabel $l_y'$ at 135 95
 \endlabellist
     \centerline{ \mbox{
\includegraphics[width=4cm]{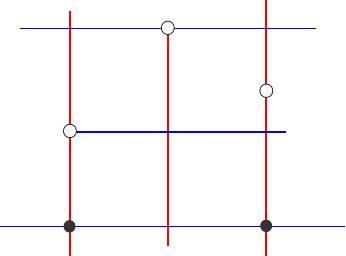}}}
\caption{Obtaining a fixed point $z$ for $g$ between $x$ and $y$}
 \label{fig_new_argument} 
\end{figure}

Since the set of fixed points of any element is closed, the above argument shows that for any leaf $l'$ of either foliation, the set $\mathrm{fix}(g) \cap l'$ is a closed (possibly empty or degenerate) interval, and we have assumed at least one such, the interval $[x,y] \subset l \in \cF_1$, is nondegenerate and nonempty. 

Since leaves that are non-separated with others are countable\footnote{As the leaf space is second countable, it is covered by countably many intervals, and hence there exists only countably many nonseparated leaves.} and that these non-separated leaves are the only ones making perfect fits (by \ref{item_loom_perfect_fit_nonseparated} of Definition \ref{def_loom_space}), we may now assume without loss of generality that $l_x=\cF_2(x)$ does not make any perfect fits.
 Take $x_n \in [x,y]$ approaching $x$ and let $l_n = \cF_2(x_n)$, and $I_n = \cF_1(l_x) \cap \cF_1(l_n)$.  As $x_n$ tends to $x$, the $g$-invariant boundary leaves of $I_n$ will meet $l_x$ arbitrarily far from $x$ (because $l_x$ does not make any perfect fit).  By our argument above, this shows that $l_x$ is pointwise fixed by $g$.  Applying this argument iteratively and using connectedness of $P$, we conclude that $P$ is globally fixed by $g$, as desired. 
 
 Now we finish the proof, treating the case when $g\notin \Aut^+(P)$ (which we will show is impossible). By the above, we know that $g^2\in \Aut^+(P)$ must be the identity. Since $g$ fixes $x,y\in l\in \cF_1$, $g$ must fix every point in $l$.  
 If $g$ is not the identity, then it must be a reflection of $P$ in the line $l$. Considering any tetrahedron rectangle that intersects $l$, we see that this is impossible, as a tetrahedron rectangle has no axial symmetry. 
\end{proof}

From Proposition \ref{prop_one_fixed_point}, we can deduce that fixed points are hyperbolic and unique.
\begin{lemma}\label{lem_unique_fixed_point}
Let $(P,\cF_1,\cF_2)$ be a loom space and  $G<\Aut^+(P)$.
Then any fixed point of a nontrivial element of $G$ is hyperbolic, and each 
nontrivial element fixes at most one point in $P$.
\end{lemma}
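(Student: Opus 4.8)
The plan is to combine Proposition~\ref{prop_one_fixed_point} with the two structural inputs about loom spaces used so far: the Observation that loom spaces contain no infinite product regions, and the fact, immediate from the definitions, that any automorphism of $P$ permutes tetrahedron rectangles. Let $g \neq \id$ fix $x$. Since $P$ is nonsingular and $g \in \Aut^+(P)$, the map $g$ preserves both foliations together with a leafwise orientation, so it fixes each of the four rays of $\cF_1(x)$ and $\cF_2(x)$ emanating from $x$ and each of the four quadrants they bound. By Proposition~\ref{prop_one_fixed_point}, $x$ is the only fixed point of $g$ on $\cF_1(x)$ and on $\cF_2(x)$, so on each of the four rays $g$ restricts to a fixed-point-free orientation-preserving homeomorphism, hence is either a topological contraction toward $x$ or a topological expansion away from $x$. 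Hyperbolicity at $x$ is precisely the statement that, after possibly replacing $g$ by $g^{-1}$, $g$ contracts both rays of $\cF_1(x)$ and expands both rays of $\cF_2(x)$; inspecting the possible behaviors of the four rays, every non-hyperbolic case either makes $g$ a \emph{shear} on one leaf through $x$ (contracting one ray and expanding the other of that leaf), or makes $x$ a sink (all four rays contracting) or a source (all four expanding, hence a sink for $g^{-1}$). So it suffices to rule out shears and sinks.

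To exclude a shear on $\cF_1(x)$, I would pick $y_0$ on its contracting ray with $\cF_2(y_0)$ making no perfect fit (recall only countably many leaves make perfect fits), set $y_n := g^n(y_0) \to x$, and study the nested regions between $\cF_2(x)$ and $\cF_2(y_n)$: using Observations~\ref{obs_no_double_perfect_fits} and~\ref{obs_boundary_has_pf} and that $\cF_2(x)$ makes no perfect fit, the increasing union of the $g$-translates of these regions is a $g$-invariant, trivially bifoliated region that is unbounded in the direction of the expanding ray of $\cF_1(x)$, hence contains a properly embedded infinite product region, contradicting the Observation. To exclude a sink, note that then a small foliation-aligned open rectangle $R \ni x$ satisfies $g(\overline R) \subset \operatorname{int} R$, so the basin $U := \bigcup_{n \geq 0} g^{-n}(R)$ is open, connected, simply connected, $g$-invariant, trivially bifoliated, and meets every leaf in an interval, i.e.\ a product region containing $\cF_1(x)$ and $\cF_2(x)$. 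If $U = P$ then $P$ is an infinite product region, a contradiction; if $U \subsetneq P$ then $U$ is a proper open rectangle, so property~\ref{item_loom_tetrahedron} puts it in a tetrahedron rectangle $T$, whence $U \subseteq g^n(T)$ for all $n \in \bZ$ (automorphisms permute tetrahedron rectangles and $g(U)=U$), and analyzing the frontier of $U$ inside the $g^n(T)$ — using that $\cF_1(x), \cF_2(x) \subset U$ carry a fixed point, hence by Proposition~\ref{prop_one_fixed_point} make no perfect fit — forces either a perfect fit on a $g$-invariant leaf through a fixed point or an infinite product region, a contradiction. This proves hyperbolicity.

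For uniqueness, suppose $g$ also fixes $y \neq x$. By Proposition~\ref{prop_one_fixed_point}, $x$ and $y$ lie on no common leaf; and if $\cF_i(x) \cap \cF_j(y) \neq \emptyset$ with $\{i,j\} = \{1,2\}$, its unique point would be fixed by $g$ and share a leaf with $x$, contradicting Proposition~\ref{prop_one_fixed_point}. So $\cF_1(x), \cF_2(x), \cF_1(y), \cF_2(y)$ are pairwise disjoint and $g$-invariant, $y$ lies in one open quadrant $Q$ of $P \smallsetminus (\cF_1(x) \cup \cF_2(x))$, and $\cF_1(y), \cF_2(y) \subset Q$. Using the hyperbolicity just proved at $x$ and at $y$ and passing to the $g$-action on the leaf space of $\cF_1$ (or of $\cF_2$, depending on the contraction/expansion pattern at $y$), one obtains two distinct transversally repelling fixed points of $g$ on that leaf space; the $g$-invariant arc joining them must then contain a further $g$-invariant, transversally attracting leaf, and running the contraction/expansion analysis there inside $Q$ produces once more an infinite product region or a perfect fit on a leaf through a fixed point — a contradiction. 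Hence $g$ fixes at most one point of $P$.

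The step I expect to be the main obstacle is the region-chasing in the shear and sink cases, together with its analogue in the uniqueness argument: in each case the dynamical picture must be converted into a genuine properly embedded infinite product region, or into a perfect fit on a leaf carrying a fixed point, and making this airtight requires care with the two alternatives of Observation~\ref{obs_boundary_has_pf} and with possible branching of the leaf spaces of $\cF_1$ and $\cF_2$.
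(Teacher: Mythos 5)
There is a genuine gap in both halves of your argument, and in each case the missing ingredient is exactly the mechanism the paper uses: the coupling of the two rays of a quadrant (or of a fixed leaf and a nearby invariant leaf) through the nonseparated--leaf/perfect--fit configuration of Observation~\ref{obs_boundary_has_pf}. For hyperbolicity, your reduction to ruling out shears and sinks/sources is fine, and the sink case can be closed (the basin is an increasing union of rectangles, hence a rectangle containing all of $\cF_1(x)$, whereas any rectangle lies in a tetrahedron rectangle and so meets each leaf in a bounded segment). But the shear exclusion does not go through as written: the region you build (the $\cF_2$-saturation of the contracting ray, or equivalently the increasing union of the strips between $\cF_2(x)$ and $\cF_2(y_{-n})$) is in general neither trivially bifoliated (an $\cF_1$-leaf through a far point of $\cF_2(y_{-n})$ need not cross $\cF_2(x)$) nor unbounded in the direction of the expanding ray $r^+$ (no point of $r^+$ other than $x$ lies on an $\cF_2$-leaf through the contracting ray), so no infinite product region is exhibited. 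The paper avoids this entirely: for a quadrant bounded by rays $r_1,r_2$, it uses Observation~\ref{obs_boundary_has_pf} applied to $\cF_1(l_2)\cap\cF_2(r_2)$ to produce points $p\in r_2$, $q\in r_1$ linked by a pair of nonseparated leaves and a common perfect-fit partner; $g$-equivariance of this configuration forces $p$ to move toward $x$ exactly when $q$ moves away, which kills shears and sinks simultaneously.

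The uniqueness half has a second gap. If $g$ fixes $x\neq y$, hyperbolicity at the two points can assign the contracting direction to $\cF_1$ at $x$ and to $\cF_2$ at $y$; in that case $\cF_1(x)$ is repelling and $\cF_1(y)$ attracting in the $\cF_1$-leaf space (and dually for $\cF_2$), so neither leaf space carries two repelling fixed leaves and your intermediate attracting leaf need not exist. Even when it does exist, it need not carry a fixed point of $g$, so "a perfect fit on a leaf through a fixed point" is not what you get, and the final contradiction is not established. The paper's uniqueness argument is much more direct and sidesteps the leaf-space dynamics: since $\cF_1(x)\cap\cF_2(y)=\emptyset$, the boundary of the $\cF_2$-saturation of $\cF_1(x)$ contains a unique leaf $l_2$ separating $\cF_2(x)$ from $\cF_2(y)$ (or equal to $\cF_2(y)$), which is therefore $g$-invariant; absence of infinite product regions together with Observation~\ref{obs_boundary_has_pf} forces $l_2$, or a leaf nonseparated from it, to make a perfect fit with $\cF_1(x)$ --- impossible because, by Lemma~\ref{lem_points_on_nonseparated} and Proposition~\ref{prop_one_fixed_point}, a leaf through a fixed point makes no perfect fits. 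I would recommend replacing both of your region-chasing steps by arguments of this type.
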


\begin{proof}
Suppose $g\in G$ is nontrivial and fixes a point $x\in P$.
By Lemma \ref{lem_points_on_nonseparated}, neither $\cF_1(x)$ nor $\cF_2(x)$ makes a perfect fit with any leaves.
Let $r_1$ and $r_2$ be rays of leaves of $\cF_1(x)$ and $\cF_2(x)$ respectively, based at $x$  and bounding a quadrant $Q$.
We will show that if $g$ is attracting on $r_1$ then it is repelling on $r_2$ and vice versa. This argument applied inductively on each quadrant implies that $x$ is hyperbolic.

\begin{figure}[h]
 \labellist 
  \small\hair 2pt
     \pinlabel $x$ at 40 10
     \pinlabel $p$ at 40 100
     \pinlabel $q$ at 80 10
     \pinlabel $gp$ at 45 55
     \pinlabel $gq$ at 137 10
     \pinlabel $l_2$ at 102 123
     \pinlabel $r_1$ at 190 15
     \pinlabel $r_2$ at 27 125
 \endlabellist
     \centerline{ \mbox{
\includegraphics[width=5cm]{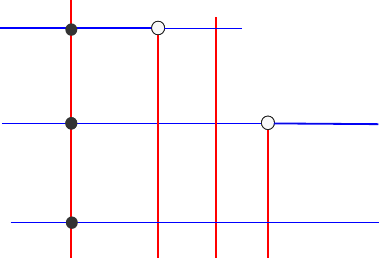}}}
\caption{Fixed points are hyperbolic}
 \label{fig_hyperbolic} 
\end{figure}

Since there are only countably many leaves that are nonseparated with other leaves, and a leaf is nonseparated with another leaf if and only if it admits a perfect fit, there exists some leaf $l_2$ intersecting $r_1$ that does not make any perfect fit. 
Let $I = \cF_1(l_2) \cap \cF_2(r_2)$.  
Since $l_2$ and $\cF_2(x)$ don't make perfect fits with other leaves, Observation \ref{obs_boundary_has_pf} implies 
$I$ is bounded by the union of two nonseparated leaves $l_1,l_1'$ with $l_1$ intersecting $r_2$ at a point $p$ and $l_1'$ intersecting $l_2$. Thus there also exists $l_2'$ a leaf of $\cF_2$ that makes a perfect fit with both $l_1$ and $l_1'$, and $l_2'$ intersects $r_1$ at a point $q$. See Figure \ref{fig_hyperbolic}.
Considering the images of $l_1,l_1'$ under $g$ shows that, either $gp$ is closer to $x$ than $p$, in which case $gq$ must be further from $x$ or vice versa. Since $g$ fixes at most one point per leaf, we conclude that $x$ is a hyperbolic fixed point of $g$.

We now want to show that $g$ fixes at most one point in $P$.  
Suppose for a contradiction that $g$ fixes  distinct points $x$ and $y$. Then $\cF_1(x)$ and $\cF_2(y)$ cannot intersect (otherwise 
$\cF_1(x) \cap \cF_2(y)$ would give a second fixed point for $g$ on $\cF_1(x)$).
Thus, there exists a unique leaf, $l_2$, in the boundary of the $\cF_2$-saturation of $\cF_1(x)$ so that either $l_2$ is equal to $\cF_2(y)$ or $l_2$ separates $\cF_2(y)$ from $\cF_2(x)$. In any case, $g$ must fix $l_2$.  Thus, $\cF_2(x)$ does not make any perfect with any leaves. 
In addition since there are no infinite product regions, either $l_2$ makes a perfect fit with $\cF_1(x)$ or it is nonseparated with a leaf $l_2'$ that makes a perfect fit with $\cF_1(x)$.
Both options are impossible since $\cF_1(x)$ does not make any perfect fits. This contradiction concludes the proof. 
\end{proof}

Similarly to the above result,  an element may fix at most one cusp:

\begin{lemma}\label{lem_fix_unique_cusp}
Let $(P,\cF_1,\cF_2)$ be a loom space and $G<\Aut^+(P)$. If a nontrivial element $g$ preserves two distinct leaves $l,l'$ of the same foliation, then $g$ acts freely on $P$, preserves a cusp $c$ and $l$ and $l'$ both have $c$ as an ideal point.

Consequently, if $g$ fixes a cusp $c$, then $g$ acts freely on $P$, fixes no other cusp, and either preserves all the leaves ending at $c$ or it acts freely on both leaf spaces.
\end{lemma}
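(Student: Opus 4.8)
\textbf{Proof plan for Lemma \ref{lem_fix_unique_cusp}.}

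The plan is to first analyze the situation where $g$ preserves two distinct leaves $l, l'$ of the same foliation, say $\cF_1$. Since $g$ preserves $l$, by Lemma \ref{lem_unique_fixed_point} it can fix at most one point total in $P$; but I want to rule out even that, so the first step is to observe that $g$ cannot fix a point on $l$ \emph{or} $l'$: if $g$ fixed a point $x \in l$, then by Lemma \ref{lem_points_on_nonseparated} $g$ would fix two points on some leaf, contradicting Proposition \ref{prop_one_fixed_point}. Hence $g$ acts without fixed points on both $l$ and $l'$ (as orientation-preserving homeomorphisms of $\bR$, translation-like). Next I would argue that $l$ and $l'$ must share an ideal point: the region $B$ between $l$ and $l'$ (a connected component of the complement whose closure meets both) is $g$-invariant. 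Picking any leaf $m$ of $\cF_2$ crossing $B$, the translates $g^n(m)$ march off in $B$; the $\cF_2$-saturation of any sub-rectangle is $g$-invariant and, since there are no infinite product regions, $B$ cannot be a product region. Using Observation \ref{obs_boundary_has_pf} on the boundary leaves and the fact that the portion of $\cF_1$-leaves crossing $B$ cannot persist forever (no infinite product region), I would conclude that the ``width'' of $B$ in the $\cF_2$-direction shrinks along the $g$-orbit, forcing $l$ and $l'$ to be asymptotic, i.e.\ to limit to a common ideal point, which is necessarily a cusp $c$ (the only ideal points of $P$ visible this way are cusps of tetrahedron rectangles, by condition \ref{item_loom_tetrahedron}). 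That $g$ then preserves $c$ follows since $g$ preserves the pair $\{l, l'\}$ and hence their common endpoint.

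For the second part: suppose $g$ fixes a cusp $c$. A cusp is the common ideal point of four sides, alternately made of (nonseparated pairs of) $\cF_1$-leaves and $\cF_2$-leaves, as in Figure \ref{fig_veering}. The element $g$ permutes the leaves ending at $c$, respecting the cyclic/linear order induced on them by the transverse structure near $c$. First I would show $g$ has no fixed point in $P$: if $g$ fixed $x \in P$, then by Lemma \ref{lem_unique_fixed_point} $x$ is the unique hyperbolic fixed point, with two invariant leaves $\cF_1(x), \cF_2(x)$, neither making a perfect fit; but an invariant cusp $c$ would have to be ``attached'' to this fixed point configuration via leaves through it, and one checks that the hyperbolic dynamics at $x$ together with preservation of $c$ forces a perfect fit at $\cF_1(x)$ or $\cF_2(x)$ (the leaves limiting toward $c$ must eventually make perfect fits, by condition \ref{item_loom_perfect_fit_nonseparated}), a contradiction. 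Hence $g$ acts freely on $P$. That $g$ fixes no other cusp $c'$: if it did, by the first part applied twice $g$ would preserve two $\cF_1$-leaves ending at $c$ and two ending at $c'$ (or the analogue with $\cF_2$), but a nontrivial orientation-preserving homeomorphism of the $\cF_1$-leaf space $\cong \bR$ fixing the point/end corresponding to $c$ and acting like a translation cannot also preserve leaves accumulating to a different cusp $c'$ — more carefully, preserving two leaves ending at $c'$ forces, by the first part, that $c'$ also equals the shared ideal point, so $c = c'$. Finally, the dichotomy: $g$ either fixes every leaf ending at $c$, or it acts on the $\cF_1$-leaf space (equivalently $\cF_2$-leaf space) without fixed points. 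If $g$ fixes some but not all leaves through $c$, pick two fixed ones $l, l'$ of $\cF_1$; by the first part $g$ acts freely on $l$, and the leaves of $\cF_2$ meeting the region between $l$ and $l'$ form an interval in the $\cF_2$-leaf space with $g$-invariant endpoints (the sides of $c$), on which $g$ acts; if $g$ fixed an interior leaf of $\cF_2$ there, that leaf together with a side leaf gives two invariant $\cF_2$-leaves, so the first part again forces them to share the ideal point $c$ — iterating and using that the fixed leaves through $c$ are closed in the leaf space, one gets that $g$ fixes \emph{all} leaves through $c$ on that side, hence all of them; otherwise $g$ moves every $\cF_2$-leaf near $c$, and by the perfect-fit structure connecting $\cF_1$- and $\cF_2$-sides of $c$, it moves every $\cF_1$-leaf near $c$ too, and then (again using closedness of the fixed set and the absence of infinite product regions) it acts freely on both full leaf spaces.

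The main obstacle I expect is the asymptotics argument in the first part — precisely showing that two $g$-invariant leaves of the same foliation must be asymptotic to a common cusp. The delicate point is controlling the boundary behavior of the $\cF_2$-saturation of the strip between $l$ and $l'$: one must rule out that this strip is an infinite product region (using Definition \ref{def_loom_space}\ref{item_loom_tetrahedron} via the Observation at the start of this section), and then use Observation \ref{obs_boundary_has_pf} to identify the boundary leaves as perfect-fit leaves and track that $g$ pushes the two endpoints of the transverse interval together. A secondary subtlety is making the final dichotomy clean: the interplay between ``fixes a leaf through $c$'' for $\cF_1$ versus $\cF_2$ is mediated by the perfect-fit relations defining the sides of the cusp (Definition \ref{def_loom_space}\ref{item_loom_perfect_fit_nonseparated}), and one needs the countability of nonseparated leaves plus closedness of fixed-leaf sets to promote ``fixes some'' to ``fixes all''.
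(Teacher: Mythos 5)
The central step of the first part is not actually proved in your proposal: you flag the claim that two $g$-invariant leaves $l,l'$ of $\cF_1$ must ``become asymptotic'' to a common ideal point as ``the main obstacle,'' and the heuristic you offer (the $\cF_2$-width of the strip between them shrinks along the $g$-orbit) is not the mechanism that makes this work, nor is it clear it can be made to work. The paper's argument is sharper and runs through the fixed-point restrictions rather than through any asymptotics. First one shows that \emph{no} leaf of $\cF_2$ can meet both $l$ and $l'$: if $\cF_2(l)\cap\cF_2(l')\neq\emptyset$, its boundary is $g$-invariant and, by Observation \ref{obs_boundary_has_pf}, contains a nonseparated leaf $l_2$ meeting (say) $l$; then $l\cap l_2$ is a $g$-fixed point lying on a leaf that makes a perfect fit, contradicting Proposition \ref{prop_one_fixed_point}. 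Hence there is a unique $g$-invariant leaf $l_2\in\partial\cF_2(l)$ separating $l$ from $l'$, and the same fixed-point obstruction forces $l_2$ to make a perfect fit with $l$ itself (rather than being nonseparated with a leaf crossing $l$). That perfect fit \emph{is} the cusp $c$, and condition \ref{item_loom_perfect_fit_nonseparated} of Definition \ref{def_loom_space} plus a repeat of the dichotomy (``$\cF_2(l')$ can neither meet nor be disjoint from the family of leaves ending at $c$'') places $l'$ in the family ending at $c$. Your sketch never isolates this nonseparated-boundary-leaf/fixed-point contradiction, which is the whole content of the lemma.

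Two smaller problems. Your opening step invokes Lemma \ref{lem_points_on_nonseparated} to rule out a fixed point on $l$, but that lemma's hypothesis is that $l$ makes a perfect fit, which you have not yet established at that stage; the correct order is to first prove $l$ ends at a cusp and only then conclude freeness (a fixed point anywhere in $P$ would make $\cF_1(x)$ a third preserved leaf, hence a perfect-fit leaf through a fixed point, contradicting Proposition \ref{prop_one_fixed_point}) --- note you also only address freeness on $l\cup l'$, not on all of $P$ as the statement requires. In the second part, the uniqueness of the fixed cusp and the dichotomy do follow essentially as you say once the first part is in place (a leaf has at most one cusp end by Observation \ref{obs_no_double_perfect_fits}, so two preserved leaves ending at distinct cusps is impossible), but your argument that a fixed point in $P$ is incompatible with a fixed cusp (``one checks that the hyperbolic dynamics \dots forces a perfect fit'') is again a placeholder where the actual proof is the first part of the lemma applied to preserved leaves.
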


\begin{proof}
We assume that $g$ fixes two leaves $l_1,l_1'$ of $\cF_1$; the case of leaves of $\cF_2$ is symmetric.

First, we claim that no leaves of $\cF_2$ can intersect both $l_1$ and $l_1'$: Suppose for a contradiction that the set $\cF_2(l_1) \cap \cF_2(l_1')$ is non-empty. Consider one of the boundaries of $\cF_2(l_1) \cap \cF_2(l_1')$: it is invariant by $g$ and consists (by Observation \ref{obs_boundary_has_pf}) in a pair of nonseparated leaves $l_2,l_2'$ at least one of which must intersect $l_1$ or $l_1'$. Say $l_2$ intersects $l_1$. Then the intersection point $l_1\cap l_2$ is fixed by $g$ and is on $l_2$, a nonseparated leaf, which contradicts uniqueness of fixed points on leaves.

Therefore $\cF_2(l_1) \cap \cF_2(l_1')=\emptyset$. In particular, there exists a unique leaf $l_2$ in $\partial \cF_2(l_1)$ that separates $l_1$ from $l_1'$. So $g$ fixes $l_2$. Moreover, $l_2$ either makes a perfect fit with $l_1$, or it is nonseparated with a leaf intersecting $l_1$. (There are no other possibilities as leaves cannot make a perfect fit on both ends by Observation \ref{obs_no_double_perfect_fits}.)
But in the latter case, we would once again obtain a fixed point of $g$ on a nonseparated leaf, contradicting uniqueness of fixed points. Thus $l_2$ and $l_1$ makes a perfect fit. In particular, $g$ fixes the cusp at the common ends of $l_1$ and $l_2$.

Condition \ref{item_loom_perfect_fit_nonseparated} of the definition of a loom space then tells us that $g$ fixes infinitely many leaves, all ending at that common cusp $c$. If $l_1'$ was not contained in that family of leaves, then the above argument would yield a contradiction, as $\cF_2(l_1')$ could neither intersect the family of leaves ending at $c$, nor be disjoint from it.

To conclude the proof of the lemma, it suffices to remark that if an element $g\in \Aut^+(P)$ fixes a cusp, then it either leaves two leaves invariant, and the above applies, or it must act freely on both leaf spaces.  The statement that $c$ is the unique cusp fixed by $g$ follows from the fact that no leaves have a cusp at each end.  
\end{proof}

\subsection{The closing property and proof of Theorem \ref{thm_loom_space}} 
To prove Theorem \ref{thm_loom_space}, we first establish the closing property, then check all the hypothesis of Theorem \ref{thm_closing_implies_prop_disc}.
 We restrict our attention to the case when $G<\Aut^+(P)$ because any subgroup $G'$ of  $\Aut_1^+(P)$ has an index at most $2$ subgroup in $\Aut^+(P)$, so showing this index-two subgroup acts properly discontinuously on $W_1^+$ implies that $G'$ also does.

\begin{proposition}\label{prop_loom_implies_closing}
Let $(P,\cF_1,\cF_2)$ be a loom space and $G<\Aut^+(P)$.
Then $G$ has the closing property. 
\end{proposition}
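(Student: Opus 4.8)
The plan is to verify the closing property directly by analyzing what it means for a group element $g \in \Aut^+(P)$ to \emph{nearly} fix a point, using the rich combinatorial structure of a loom space (the abundance of tetrahedron rectangles and the constraint that only non-separated leaves admit perfect fits). First I would recall that, by Lemma \ref{lem_unique_fixed_point}, every fixed point of a nontrivial element is hyperbolic and unique, and that fixed points lie on leaves with no perfect fit; this rigidity is what makes the closing property tractable. The main idea: given $x \in P$ and a neighborhood $U_x$, choose a tetrahedron rectangle $T$ with $x$ in its interior and $\overline{T} \subset U_x$ (possible by condition \ref{item_loom_tetrahedron} of Definition \ref{def_loom_space}), and then take $V_x$ to be a much smaller open rectangle around $x$ whose $\cF_1$- and $\cF_2$-saturations are ``deep inside'' $T$ in a quantitative sense to be made precise below. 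The claim is that if $g(V_x) \cap V_x \neq \emptyset$, then $g$ must preserve the tetrahedron rectangle $T$ (or at least map its cusps to themselves), and hence fixes a point in $\overline{T} \subset U_x$.

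The key steps, in order: (1) Use that a tetrahedron rectangle is determined by its four cusps, and that automorphisms send tetrahedron rectangles to tetrahedron rectangles (since the defining property is preserved by $\Aut^+(P)$ — a tetrahedron rectangle is characterized by its boundary structure of non-separated leaf pairs, which $g$ respects). (2) Show that if $V_x$ is chosen small enough relative to $T$, then $g(V_x) \cap V_x \neq \emptyset$ forces $g(T) \cap T$ to contain a ``large'' sub-rectangle, and then a separation/ordering argument on the leaf spaces (using Observation \ref{obs_no_double_perfect_fits} and Observation \ref{obs_boundary_has_pf}) forces $g(T) = T$ or at minimum that $g$ fixes a cusp of $T$. (3) If $g$ fixes a cusp $c$ of $T$, invoke Lemma \ref{lem_fix_unique_cusp}: either $g$ preserves two leaves ending at $c$ — but then it has no fixed point, which we must rule out separately — or it acts freely. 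This last subtlety is the delicate point: I need to argue that when $g$ genuinely nearly-returns $V_x$ to itself (not just fixes a cusp far away), it actually has a \emph{fixed point} near $x$, not merely an invariant cusp. The resolution is that preserving a cusp of $T$ while also satisfying $g(V_x) \cap V_x \neq \emptyset$ with $V_x$ tiny and $x$ interior to $T$ pins $g$ down enough: a contraction/expansion argument on the leaves bounding the approach to the cusp produces a genuine fixed point (the map on one of the leaf segments between $x$'s leaves and the cusp side has a fixed point by an intermediate-value/monotonicity argument).

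The main obstacle I expect is step (2)–(3): carefully quantifying ``$V_x$ small enough'' so that $g(V_x) \cap V_x \neq \emptyset$ genuinely forces $g$ to preserve the combinatorial skeleton of $T$, and then squeezing out an actual fixed point rather than just an invariant cusp. One clean way to organize this is to work on the leaf spaces of $\cF_1$ and $\cF_2$ (each homeomorphic to a non-Hausdorff 1-manifold), observe that $g$ acts on each, and that the four cusps of $T$ correspond to specific non-separated-pair configurations in these leaf spaces; then $g(V_x)\cap V_x \neq\emptyset$ means $g$ moves the point $x$ (or rather its pair of leaves) only slightly, hence cannot swap or destroy the bounding configurations, hence fixes each of them, hence has a fixed point in the corresponding quadrant of $\overline{T}$. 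I would also note that this argument should be phrased so it simultaneously covers the singular-point version (Definition \ref{def_closing_prop_general_case}), but since a loom space has no singularities (by hypothesis), only the nonsingular clause is needed here.
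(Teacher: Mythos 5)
Your overall strategy (trap $x$ inside tetrahedron rectangles and argue that a nearly-returning $g$ must fix something) is the right instinct, but there are two genuine gaps, both at exactly the points you flag as delicate. First, your step (2) asserts that $g(V_x)\cap V_x\neq\emptyset$ with $V_x$ small forces $g(T)=T$ or at least that $g$ fixes a cusp of $T$. This is false for a single, arbitrarily chosen tetrahedron rectangle: the intersection $gT\cap T$ can be \emph{diagonal} (two opposite corners of $T$ lie in $gT$ and vice versa, cf.\ Lemma \ref{lem_types_of_intersections}\ref{item_non_markovian}), in which case $g$ need fix neither a point nor a cusp, and yet the overlap $gT\cap T$ can contain $x$ if $x$ happens to sit in the ``middle'' subrectangle of $T$ (the part between the two $\cF_2$-leaves through the cusps on the $\cF_1$-sides). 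The paper's fix is Lemma \ref{lem_building_tetrahedrons}: one constructs tetrahedron rectangles with \emph{three cusps on one side} of $\cF_1(x)$ (resp.\ $\cF_2(x)$), which guarantees $x$ lies in a side subrectangle; Observation \ref{obs_side_rectangles_moved_off} then shows the side subrectangles are moved entirely off themselves in a diagonal intersection, excluding that case. Without this positioning of the cusps relative to $x$, your separation/ordering argument on the leaf spaces does not go through.

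Second, your resolution of the cusp-fixing case is incorrect: by Lemma \ref{lem_fix_unique_cusp}, any element preserving a cusp acts \emph{freely} on $P$, so no contraction/expansion or intermediate-value argument can ``squeeze out'' a fixed point from such a $g$ -- there is none. The case must be \emph{excluded}, not resolved, and a single tetrahedron rectangle does not suffice to exclude it (a weakly markovian intersection, where $g$ fixes one cusp of $T$ and translates along the leaves ending there, is perfectly consistent with $gT\cap T\neq\emptyset$ near $x$). The paper's device is to use \emph{two} tetrahedron rectangles $H_x$ and $V_x$ through $x$, built so that they share no cusp, and to set $R_x=H_x\cap V_x$. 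If $gR_x\cap R_x\neq\emptyset$ then $g$ meets both tetrahedra; having excluded diagonal intersections, either both intersections are markovian (giving the desired fixed point in $H_x\cap V_x$), or $g$ fixes a cusp of one of them together with either a cusp of the other or a fixed point -- both impossible by Lemma \ref{lem_fix_unique_cusp}. This two-rectangle argument is the missing idea in your proposal.
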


In the proof, we will use tetrahedron rectangles to build a closing pair around any point.  To do this we need to understand 
how such rectangles can intersect with their images under elements of $G$.  This is described in the following lemma.

\begin{lemma}\label{lem_types_of_intersections}
Let $(P,\cF_1,\cF_2)$ be a loom space, $R$ a tetrahedron rectangle and suppose $G<\Aut^+(P)$.
Suppose $gR\cap R \neq\emptyset$ for some $g \neq \mathrm{id}$. Then we have one of the following possibilities (See Figure \ref{fig_types_of_intersections}) for the intersection pattern of $gR$ and $R$: 
\begin{enumerate}[label=(\roman*)]
\item \label{item_markovian} 
\textbf{\em Markovian}: up to replacing $g$ with its inverse, we have $\cF_1(R) \subset \cF_1(gR)$ and $\cF_2(R) \supset \cF_2(gR)$ and both containments are strict in the sense that no side of $R$ or $gR$ is contained in a side of the other. In particular, $g$ fixes a unique point in $R$, which must be hyperbolic. 
\item \label{item_weak_markovian} 
\textbf{\em Weakly markovian}: up to replacing $g$ with its inverse, we have $\cF_2(gR)\subset \cF_2(R)$ and $\cF_1(R) \subset \cF_1(gR)$, but $g$ preserves one of the leaves on a side of $R$. In this case $g$ preserves a unique cusp $c$ of $R$ and the side $s$ containing $c$ is either contained in, or contains, $gs$.
\item\label{item_non_markovian} 
\textbf{\em Diagonal}:  $\cF_2(gR)\cap \cF_2(R)$ is a proper subset of both $\cF_2(gR)$ and $\cF_2(R)$, and similarly for the $\cF_1$-saturations. Moreover, there exists two diagonally opposite corners $v_1, v_2$ of $R$ such that $v_1\in gR$ and $g v_2\in R$. 
\end{enumerate}
\end{lemma}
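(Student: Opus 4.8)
The plan is to analyze how the boundary of a tetrahedron rectangle $R$ can sit relative to $gR$, using the key structural feature of loom spaces: each side of a tetrahedron rectangle is a union of two rays of nonseparated leaves meeting at a cusp, and (by Observations \ref{obs_no_double_perfect_fits} and \ref{obs_boundary_has_pf}) perfect fits are tightly controlled. First I would set up coordinates: $R$ is an open rectangle with $\cF_1$-saturation an interval $I_1$ in the $\cF_1$-leaf space and $\cF_2$-saturation an interval $I_2$ in the $\cF_2$-leaf space, and similarly for $gR$. Since $g$ preserves both foliations and their transverse orientations (we are in the case $G < \Aut^+(P)$), $g$ acts on each leaf space by an orientation-preserving homeomorphism, so $gI_1, gI_2$ are intervals. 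The hypothesis $gR \cap R \neq \emptyset$ forces $gI_1 \cap I_1 \neq \emptyset$ and $gI_2 \cap I_2 \neq \emptyset$. Two intervals can intersect in one of a few combinatorial ways: one contains the other (possibly sharing an endpoint), or they overlap "diagonally" with each sticking out past the other on one side. I would enumerate these possibilities for the pair $(I_1, gI_1)$ and the pair $(I_2, gI_2)$ simultaneously.

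The next step is to rule out the degenerate combinations and match the surviving ones to the three named cases. The crucial input is that $R$ has no infinite product region (it is a tetrahedron rectangle), hence $g$ cannot be an infinite-order element preserving $R$ setwise with nested or equal saturations in \emph{both} coordinates in the same direction -- one direction must expand and the other contract, which is exactly the Markovian alternative; here I would invoke the analysis of the boundary leaves of $R \cap gR$ (they are $g$-invariant unions of nonseparated leaves, by Observation \ref{obs_boundary_has_pf}), together with Lemma \ref{lem_unique_fixed_point}, to conclude $g$ has a unique hyperbolic fixed point in $R$. For the \textbf{weakly Markovian} case, the dichotomy is whether one of the four boundary leaves of $R$ itself is fixed by $g$: if a side-leaf $s$ of $R$ is $g$-invariant, then (using that sides of tetrahedron rectangles meet at cusps, and condition \ref{item_loom_perfect_fit_nonseparated}) $g$ fixes the cusp $c$ at the end of $s$, and by Lemma \ref{lem_fix_unique_cusp} this cusp is unique and $g$ acts freely; the containment $s \subset gs$ or $gs \subset s$ then follows from $g$ being a fixed-point-free orientation-preserving homeomorphism of the leaf $s \cong \bR$. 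The remaining possibility is that no side-leaf of $R$ is fixed and the saturations genuinely overlap "diagonally" in both directions; here I would show the two diagonally opposite corners claim by observing that a corner of $R$ is the intersection of a side-leaf of $\cF_1$ with a side-leaf of $\cF_2$, and the overlap pattern places exactly one such corner inside $gR$ and sends the opposite one into $R$.

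The main obstacle I anticipate is the careful bookkeeping in the \textbf{Markovian} case to prove that the containments $\cF_1(R) \subset \cF_1(gR)$, $\cF_2(gR) \subset \cF_2(R)$ are \emph{strict} with no side of $R$ contained in a side of $gR$: a priori, a boundary leaf of $R$ could coincide with a boundary leaf of $gR$ without being $g$-invariant, and one must use that $g$ preserves transverse orientation of each foliation, plus the structure of sides as unions of nonseparated leaves with a cusp, to exclude this and land in case \ref{item_weak_markovian} or \ref{item_non_markovian} instead. A secondary subtlety is checking that the three cases are genuinely exhaustive, i.e., that no pair of tetrahedron rectangles can overlap in a way where, say, one saturation is nested and the other is diagonal \emph{and} a side is fixed -- this requires ruling out, via Observation \ref{obs_no_double_perfect_fits}, the coexistence of a perfect fit on two ends of a single leaf. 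Once the combinatorics of intervals is organized and the loom-space constraints on boundary leaves are applied, the identification of each surviving pattern with one of the three named cases, and the statements about fixed points and cusps, follow from Lemmas \ref{lem_unique_fixed_point} and \ref{lem_fix_unique_cusp} already established.
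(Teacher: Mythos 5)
Your overall strategy (classify the intersection by the containment pattern of the $\cF_1$- and $\cF_2$-saturations, then use Lemmas \ref{lem_unique_fixed_point} and \ref{lem_fix_unique_cusp} to extract the fixed point, respectively the unique cusp) is the same as the paper's, but the step that actually collapses the nine a priori interval-intersection patterns down to the three listed cases is missing, and the two tools you propose in its place do not work. First, the boundary leaves of $R\cap gR$ are \emph{not} $g$-invariant: $g$ carries $R$ to $gR$, not $R\cap gR$ to itself, so Observation \ref{obs_boundary_has_pf} gives you nonseparated boundary leaves but no invariance, and you cannot extract a fixed point or fixed cusp from them directly. Second, the absence of infinite product regions does not by itself forbid the configuration $\cF_1(gR)\subset\cF_1(R)$ \emph{and} $\cF_2(gR)\subset\cF_2(R)$ (i.e.\ $gR$ entirely inside $R$), nor the mixed configuration where one saturation is nested and the other overlaps diagonally; nesting $g^nR$ inside $R$ produces no product region in any obvious way.

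The mechanism the paper uses instead is the cusp structure of the sides: every side of a tetrahedron rectangle contains a cusp (a pair of nonseparated rays), and no such side can lie in the trivially foliated interior of a rectangle, since inside a product region the two rays of the side would be separated by transverse leaves. Hence, if $\cF_2(gR)\subsetneq\cF_2(R)$, the sides of $gR$ bounding its $\cF_1$-saturation must exit $R$, forcing $\cF_1(gR)\supset\cF_1(R)$; whether that reverse containment is strict (equivalently, whether $g$ preserves a cusp) is exactly the Markovian versus weakly Markovian dichotomy. Finally, for exhaustiveness of the diagonal case you need that $R$ and $gR$ cannot share more than one side, which follows from Lemma \ref{lem_fix_unique_cusp} (no nontrivial element fixes two cusps); your proposed appeal to Observation \ref{obs_no_double_perfect_fits} addresses a different issue (two perfect fits on one leaf) and does not substitute for this. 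The rest of your outline -- the fixed-leaf argument in the Markovian case, the translation-on-$\bR$ argument for $s$ versus $gs$ in the weakly Markovian case, and the opposite-corners conclusion in the diagonal case -- is consistent with the paper once the reduction above is in place.
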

\begin{figure}[h]
     \centerline{ \mbox{
\includegraphics[width=11cm]{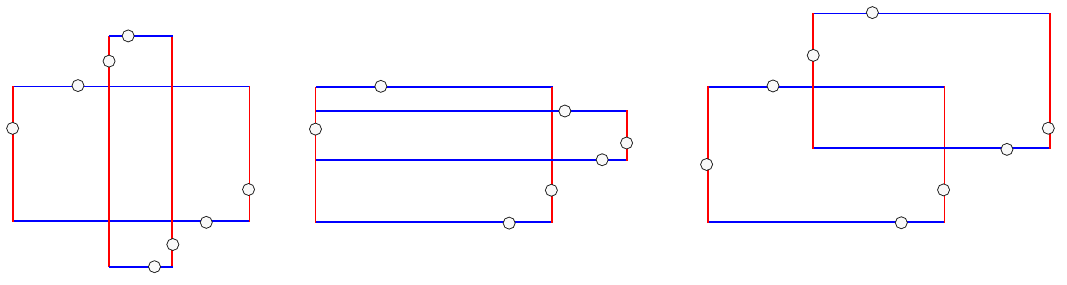}}}
\caption{The possible intersections of $R$ with $gR$: Markovian at left, weakly Markovian at center and diagonal at right.}
 \label{fig_types_of_intersections} 
\end{figure}

\begin{proof} 
Suppose first that $\cF_2(gR) \subset \cF_2(R)$, with strict containment, meaning that the $\cF_2$-sides of $R$ and $gR$ are disjoint.  Since the interior of $R$ cannot contain a side of $gR$ (since any side contains a cusp point), this forces $\cF_1(gR) \supset \cF_1(R)$.
  Again, since interiors of rectangles have no cusps, there are exactly two possibilities for the intersection configuration: Markovian or weakly markovian, depending on whether the containment $\cF_1(gR) \supset \cF_1(R)$ is strict or not, or equivalently, depending on whether $g$ preserves a cusp point or not. 
A symmetric argument applies if $\cF_1(gR) \subset \cF_1(R)$ with strict containment, or if we replace $g$ with $g^{-1}$.

Since $g$ cannot fix two distinct cusps by Lemma \ref{lem_fix_unique_cusp}, $R$ and $gR$ cannot share more than one side. Thus, if none of the cases above occur, then we cannot have any containments of the saturations of $R$, $gR$ or $g^{-1}R$.  This forces a corner of $R$ to be in $gR$ and vice versa, giving case  \ref{item_non_markovian}.
\end{proof}

The case of a diagonal intersection of $R$ and $gR$ does not imply the existence, or lack of, fixed point or cusp for $g$. However, it does imply that some subparts of $R$ are taken off of themselves by $g$ in the following sense:

\begin{observation}\label{obs_side_rectangles_moved_off}
Let $R$ be a tetrahedron and $g\in G$ nontrivial such that $R$ and $gR$ have a diagonal intersection.
Consider the subdivision of $R$ into three (vertical) subrectangles $V_l, V_c, V_r$ obtained by cutting $R$ along the two $\cF_2$-leaves ending at the cusps on the $\cF_1$-sides of $R$. We name those rectangles such that $V_l$ and $V_r$ each have a $\cF_2$-side in common with $R$ and $V_c$ does not.
Then $gV_l \cap V_l =\emptyset$ and $gV_r \cap V_r =\emptyset$.

The same result holds if we instead split $R$ along the two $\cF_1$-leaves ending at the cusps on the $\cF_2$-sides of $R$.
\end{observation}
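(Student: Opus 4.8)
The plan is to cut the problem down to a single case by symmetry and then derive a contradiction from the impossibility of a leaf crossing a product rectangle while making a perfect fit ``inside'' it. First, the reductions: interchanging $\cF_1$ and $\cF_2$ swaps the two splittings, and reversing a leafwise orientation of $\cF_1$ swaps $V_l$ with $V_r$, so it suffices to prove $gV_l\cap V_l=\emptyset$. Moreover $R\cap gR\neq\emptyset$ iff $R\cap g^{-1}R\neq\emptyset$, a diagonal intersection stays diagonal under $g\mapsto g^{-1}$, and $g(g^{-1}V_l\cap V_l)=V_l\cap gV_l$; so I may replace $g$ by $g^{-1}$ and thereby normalize so that the interval of $\cF_2$-leaves meeting $gR$ extends past that of $R$ on the side \emph{opposite} to the $\cF_2$-side $s=V_l\cap R$ shared by $V_l$ and $R$ (by the ``proper subinterval of each'' part of Lemma~\ref{lem_types_of_intersections}(iii), exactly one of $g,g^{-1}$ has this property).

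Next, coordinates. Since $R$ and $gR$ are product rectangles, a point lies in either one iff both leaves through it meet it; identifying a point with the pair of leaves through it therefore presents $R$ and $gR$ as coordinate boxes $I_R\times J_R$ and $I_{gR}\times J_{gR}$ inside the product of the two leaf spaces, with $I_\bullet$, $J_\bullet$ the intervals traced by the $\cF_1$- and $\cF_2$-saturations. Write $a,b$ (resp.\ $a',b'$) for the $\cF_2$-leaves bounding $R$ (resp.\ $gR$) on the $s$-side and on the opposite side, and $\gamma\prec\gamma'$ for the two $\cF_2$-leaves through the $\cF_1$-side cusps of $R$, ordered with $\gamma$ nearer $a$. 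The normalization reads $a\prec a'\prec b\prec b'$, and $I_R\cap I_{gR}\neq\emptyset$ since $R\cap gR\neq\emptyset$. As $g$ preserves both leafwise orientations it acts orientation-preservingly on both leaf spaces, so $gV_l$ is the ``leftmost strip'' of $gR$; in coordinates $V_l=I_R\times(a,\gamma)$, $gV_l=I_{gR}\times(a',g\gamma)$, and intersecting these boxes gives $gV_l\cap V_l\neq\emptyset\iff a'\prec\gamma$. So it remains to rule out that the $\cF_2$-side of $gR$ on the $s$-side lies strictly between $s$ and the first cut-leaf $\gamma$ of $R$.

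Finally, the contradiction. Suppose $a'\prec\gamma$. Lemma~\ref{lem_types_of_intersections}(iii) gives a corner $v_1$ of $R$ with $v_1\in gR$; since $a\notin J_{gR}$, the coordinates force $v_1$ to lie on the $\cF_2$-side of $R$ opposite $s$ and on one of the two $\cF_1$-sides $s^\ast$ of $R$. Let $c^\ast$ be the cusp of $s^\ast$: its $\cF_2$-coordinate is $\gamma$ or $\gamma'$, in either case $\succeq\gamma\succ a'$ and $\prec b\prec b'$, hence strictly inside $J_{gR}$. Let $\lambda^\ast$ be the $\cF_2$-leaf ending at $c^\ast$, and $\ell$ the $\cF_1$-leaf through $v_1$ carrying the side-ray $\rho\subset s^\ast$ from $v_1$ to $c^\ast$. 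Along $\rho$ the $\cF_1$-coordinate is constantly $\ell$ (which meets $gR$ because $v_1\in gR$) and the $\cF_2$-coordinate runs from $b$ to the $\cF_2$-coordinate of $c^\ast$, staying strictly inside $J_{gR}$; hence $\rho\subset gR$. Thus $\ell$ and $\lambda^\ast$ both cross the product rectangle $gR$ and so meet inside it, whereas they make a perfect fit at $c^\ast$ and are therefore disjoint — contradiction. This proves $gV_l\cap V_l=\emptyset$, and the reductions give the other three assertions. The argument is short once one works in the leaf spaces, the crux being the perfect-fit obstruction just used; the main technical obstacle is the bookkeeping in the non-Hausdorff leaf spaces — verifying that the normalization is attainable, that the comparisons $a\prec a'\prec\gamma\preceq\gamma'\prec b\prec b'$ are meaningful near the nonseparated leaves bounding the sides, and that the coordinate descriptions of $v_1$, $\rho$ and the strips are as stated.
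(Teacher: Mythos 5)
Your argument is correct. For comparison: the paper offers no proof at all here — it records the statement as an \emph{observation}, treating it as visually evident from the diagonal intersection pattern of Lemma \ref{lem_types_of_intersections}\ref{item_non_markovian} and Figure \ref{fig_types_of_intersections} — so what you have written is a genuine filling-in rather than an alternative to an existing argument. Your reductions (symmetry in the two foliations and in the labels $l,r$, and the $g\leftrightarrow g^{-1}$ normalization, which is legitimate since $g(g^{-1}V_l\cap V_l)=V_l\cap gV_l$ and the diagonal pattern is symmetric in $g$ and $g^{-1}$) are sound, and the crux is exactly the right one: if the $\cF_2$-extent of $gR$ reached past the cut-leaf $\gamma$ into the strip $V_l$, then the cut-leaf $\lambda^\ast$ through the cusp $c^\ast$ of the $\cF_1$-side carrying the corner $v_1\in gR$ would lie in $\cF_2(gR)$ while $\cF_1(v_1)$ lies in $\cF_1(gR)$, forcing these two leaves to meet inside the product rectangle $gR$ even though they make a perfect fit. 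This is the same perfect-fit obstruction the paper uses elsewhere (e.g.\ in Lemma \ref{lem_points_on_nonseparated} and Lemma \ref{lem_building_tetrahedrons}), so it fits the toolkit of the section. The points you flag as remaining bookkeeping are indeed the only delicate ones: that $\cF_2(R)\cap\cF_2(gR)$ is a single subinterval adjacent to one end of each (which follows from the corner condition of Lemma \ref{lem_types_of_intersections}\ref{item_non_markovian}, since $\cF_2(v_1)$ is a boundary leaf of $R$ contained in the open set $\cF_2(gR)$), and that the order comparisons make sense despite the nonseparated boundary leaves; neither causes trouble. One cosmetic remark: the step ``$\rho\subset gR$'' is not needed — once $\cF_1(v_1)\in\cF_1(gR)$ and $\lambda^\ast\in\cF_2(gR)$ you already have the contradiction.
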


Notice that in the above result the middle subrectangle may or may not be taken off of itself by $g$, depending on the relative positions of cusps in $R$, see Figure  \ref{fig_intersection_patterns}.
This leads to some challenges in finding neighborhoods for the closing property.  To deal with this, we next show that around any point $x$, we can always build tetrahedron rectangles such that $x$ is {\em not} in the middle subrectangle.

\begin{figure}[h]
     \centerline{ \mbox{
\includegraphics[width=10cm]{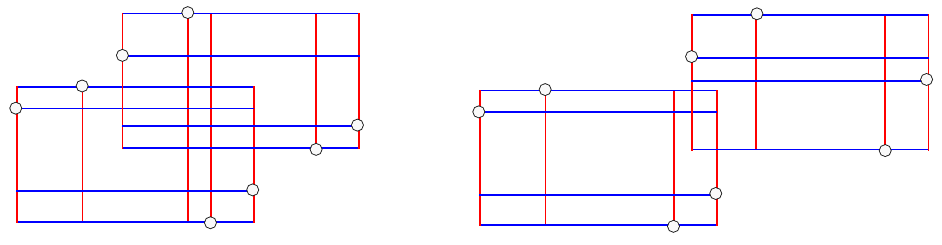}}}
\caption{Diagonal intersection patterns for the two types of tetrahedrons. }
 \label{fig_intersection_patterns} 
\end{figure}

\begin{lemma}\label{lem_building_tetrahedrons}
Let $x\in P$. There exists $V$ a tetrahedron rectangle containing $x$ and such that three of the four cusps are on one side of $\cF_1(x)$. Moreover, we can choose $V$ such that $V\cap \cF_1(x)$ is as small as we like.

Similarly, there exists $H$ a tetrahedron rectangle containing $x$ and such that three of the four cusps are on one side of $\cF_2(x)$, and $H\cap \cF_2(x)$ can be chosen to be inside any fixed neighborhood of $x$.
\end{lemma}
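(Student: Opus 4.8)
The plan is to use condition \ref{item_loom_tetrahedron} of the definition of a loom space (every open rectangle sits inside a tetrahedron rectangle) but to feed it a carefully chosen small rectangle whose position relative to $x$ forces three of the four cusps of the enclosing tetrahedron onto one side of $\cF_1(x)$. The key geometric fact I will exploit is Observation \ref{obs_no_double_perfect_fits} together with the structure of a tetrahedron rectangle: its boundary has four sides, each consisting of two rays of nonseparated leaves meeting at a cusp, and the two cusps on the two $\cF_1$-sides are ``staggered'' — their associated $\cF_2$-leaves are distinct — and likewise for the two $\cF_2$-sides. So in any tetrahedron rectangle, cutting along the two $\cF_2$-leaves through the cusps on the $\cF_1$-sides produces the three vertical subrectangles $V_l, V_c, V_r$ of Observation \ref{obs_side_rectangles_moved_off}; the content of the lemma is that we can arrange $x$ to lie in $V_l$ (or $V_r$), i.e. outside $V_c$, and moreover $x$ can be put as close to a $\cF_2$-side as we wish.

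First I would fix a small open rectangle $B$ containing $x$, contained in any prescribed neighborhood of $x$, and chosen so that $\cF_1(x) \cap B$ is as small as desired. Next, enlarge $B$ on \emph{one} side only: take an open rectangle $B'$ with $x \in B'$, still small in the $\cF_1(x)$-direction, but long in the $\cF_2$-direction on the positive side of $\cF_1(x)$ — concretely, $B'$ is foliated-homeomorphic to a rectangle in which $\cF_1(x)$ appears near one $\cF_1$-edge. Apply \ref{item_loom_tetrahedron} to get a tetrahedron rectangle $V \supset B'$. Because $B'$ extends far to the positive side of $\cF_1(x)$ and barely to the negative side, at least two of the cusps of $V$ (those on its $\cF_2$-sides, which are the ``extreme'' corners reached by extending leaves) lie on the positive side of $\cF_1(x)$. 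The remaining point is to get a \emph{third} cusp on that side. For this I would argue that if both cusps on the $\cF_1$-sides of $V$ lay on the negative side of $\cF_1(x)$, then the two $\cF_2$-leaves through those cusps would both cross $\cF_1(x)$, and since $V\cap\cF_1(x)$ was chosen tiny we could instead have started with a thinner $B'$ lying strictly between those two leaves — then the enclosing tetrahedron $V$ produced by \ref{item_loom_tetrahedron} has both $\cF_1$-side cusps on the positive side too, or at worst one on each side. Iterating this thinning argument finitely many times (there are only two cusps to push across), and using that leaves are properly embedded so the ``crossing region'' of $\cF_1(x)$ is an honest interval, yields a $V$ with three cusps on one side of $\cF_1(x)$ and $V \cap \cF_1(x)$ arbitrarily small. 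The construction of $H$ is identical with the roles of $\cF_1$ and $\cF_2$ swapped.

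I expect the main obstacle to be making the ``thinning'' step rigorous: one must verify that shrinking the starting rectangle $B'$ in the $\cF_1(x)$-direction genuinely forces the cusps of the \emph{resulting} tetrahedron — which is not canonically associated to $B'$ — to migrate to the desired side, rather than the tetrahedron simply being replaced by a larger one with badly placed cusps. The resolution should come from Observation \ref{obs_boundary_has_pf} and Observation \ref{obs_no_double_perfect_fits}: a tetrahedron rectangle containing a very thin rectangle $B'$ around $x$ must have its $\cF_1$-sides close to $\cF_1(x)$ on at least the side where $B'$ is thin, because any $\cF_2$-leaf hitting an $\cF_1$-side of $V$ and crossing $\cF_1(x)$ transversally would, by the perfect-fit constraints, be obstructed from doing so once $B'$ is squeezed tightly enough against $\cF_1(x)$; equivalently, the cusp of that side, being an ideal point of $V$, is pinned by the nonseparated-leaf structure to one component of $P \smallsetminus \cF_1(x)$. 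Once this localization of cusps is established, counting (four cusps, at most one allowed on the ``wrong'' side after the thinning) gives the claimed three-on-one-side configuration, and smallness of $V \cap \cF_1(x)$ is immediate from smallness of $B'$ since $B' \subset V$ forces nothing from below but the thinning controls $V$ from both sides along $\cF_1(x)$.
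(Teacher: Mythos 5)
Your plan has a genuine gap at its core: condition \ref{item_loom_tetrahedron} of the loom-space axioms only asserts that \emph{some} tetrahedron rectangle contains a given open rectangle; it gives no control whatsoever over that tetrahedron from the outside. So shrinking or reshaping the input rectangle $B'$ does not constrain the enclosing tetrahedron $V$: the same huge tetrahedron with badly placed cusps contains every sufficiently thin $B'$ around $x$, and nothing forces the axiom to hand you a different one. This breaks both halves of your argument. The claim that the two cusps on the $\cF_2$-sides of $V$ lie on the positive side of $\cF_1(x)$ because $B'$ ``extends far'' in that direction is unjustified ($V$ may extend even farther on the negative side), and the required smallness of $V\cap\cF_1(x)$ cannot be extracted from smallness of $B'\cap\cF_1(x)$, since containment bounds $V$ only from below. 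You correctly flag the thinning step as the main obstacle, but the resolution you sketch (``the cusp is pinned by the nonseparated-leaf structure once $B'$ is squeezed tightly enough'') is exactly the assertion that needs proof, and as stated it is false for an arbitrary enclosing tetrahedron.

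The paper's proof goes the other way around: rather than asking the axiom to produce a well-placed tetrahedron, it first \emph{builds} the bounding cusp configurations explicitly and only then invokes condition \ref{item_loom_tetrahedron} on a region whose boundary is already pinned. Concretely, one picks a ray $r_1$ of $\cF_1(x)$ with no perfect fit (Observation \ref{obs_no_double_perfect_fits}), chooses leaves $l_1, f_1$ meeting a prescribed small interval $I\subset\cF_2(x)$ on opposite sides of $x$ and making no perfect fits, and applies Observation \ref{obs_boundary_has_pf} to the saturations $\cF_2(l_1)\cap\cF_2(r_1)$ and $\cF_2(f_1)\cap\cF_2(r_1)$. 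This produces pairs of nonseparated leaves and the leaves they perfect-fit with, at explicitly located positions (inside $I$, and on the $r_1$ side), together with their nonseparated partners via condition \ref{item_loom_perfect_fit_nonseparated}. A short case analysis (depending on whether a single leaf crosses both partners, with one more application of Observation \ref{obs_boundary_has_pf} if not) yields a trivially foliated region bounded by these configurations; the tetrahedron rectangle containing it then necessarily has its cusps at the already-identified perfect fits, giving three cusps on one side and the intersection with $\cF_2(x)$ inside $I$. If you want to repair your write-up, you should replace the thinning heuristic with this kind of explicit construction of the boundary cusps.
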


\begin{proof}
We will build the tetrahedron rectangle $H$, the construction for $V$ is symmetric.
Recall that no leaves have perfect fits on both rays. So at least one of the rays of $\cF_1(x)$ based at $x$ does not make a perfect fit, call it $r_1$.

Let $I \subset \cF_2(x)$ be a given neighborhood of $x$ in $\cF_2(x)$.  Let $l_1$ be a leaf of $\cF_1$ (distinct from $\cF_1(x)$) that intersects $I$. Recall from earlier that there are only countably many leaves making perfect fits, so we can choose $l_1$ to not make a perfect fit with any other leaf. 
By Observation \ref{obs_boundary_has_pf}, $\cF_2(l_1) \cap \cF_2(r_1)$ is  bounded by a union of two nonseparated leaves $l_2, l_2'$ with $l_2\cap r_1\neq \emptyset$. These leaves make a perfect fit with some leaf $l_1'$ such that $l_1'$ intersects $I$ between $l_1$ and $x$. See Figure \ref{fig_building_tetrahedron}.

\begin{figure}[h]   \labellist 
  \small\hair 2pt
     \pinlabel $x$ at 27 80
     \pinlabel $l_1$ at -5 120
     \pinlabel $f_1'$ at -5 47
          \pinlabel $f_1''$ at 185 50
          \pinlabel $f_1$ at 120 5
     \pinlabel $l_1'$ at -5 100
     \pinlabel $l_1''$ at 185 100
     \pinlabel $r_1$ at 180 80
    \pinlabel $l_2'$ at 52 148
     \pinlabel $l_2$ at 72 5
     \pinlabel $f_2$ at 130 120
     \pinlabel $x$ at 270 75
     \pinlabel $l_1$ at 234 120
     \pinlabel $f_1'$ at 234 45
          \pinlabel $f_1''$ at 430 50
     \pinlabel $f_1$ at 410 5
     \pinlabel $l_1'$ at 234 104
     \pinlabel $l_1''$ at 352 148
    \pinlabel $l_2'$ at 295 148
     \pinlabel $l_2$ at 315 5
    \pinlabel $f_2'$ at 355 5
     \pinlabel $h_1'$ at 234 87
     \pinlabel $h_1''$ at 430 85
 \endlabellist
     \centerline{ \mbox{
\includegraphics[width=13cm]{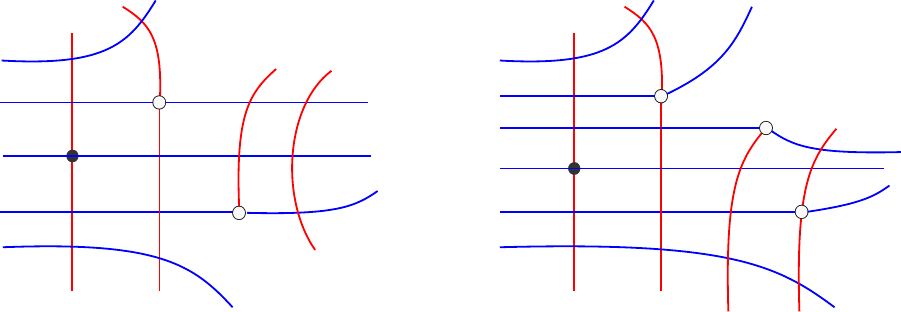}}}
\caption{The two possible configurations when building $H$.}
 \label{fig_building_tetrahedron} 
\end{figure}
Using again condition \ref{item_loom_perfect_fit_nonseparated} of a loom space, the leaf $l_1'$ is nonseparated with a leaf $l_1''$.

Repeat the above construction taking a leaf $f_1$ of $\cF_1$ (distinct form $\cF_1(x)$) that intersects $I$ on the opposite side of $x$ to $l_1$.  We obtain as above two nonseparated leaves $f_2, f_2'$ of $\cF_2$, with $f_2$ intersecting $r_1$, and two nonseparated leaves $f_1'$, $f_1''$, both making a perfect fit with $f_2$, and with $f_1'$ intersecting $I$ between $f_1$ and $x$. See Figure \ref{fig_building_tetrahedron}.

There are now two possible configurations, depending on whether or not there is a common leaf intersecting both $l_1''$ and $f_1''$.
Suppose as a first case that there exists some leaf $l$ intersecting both $f_1''$ and $l_1''$.  Then the union of $f_1$, $f_1''$, $l_1$, $l_1''$, $\cF_2(x)$ and $l$ bound a trivially foliated region.  By assumption, this is contained in a tetrahedron rectangle $H$, and by construction three ideal points are on one side of $x$ and $H\cap \cF_2(x) \subset I$.

So suppose instead that no leaf simultaneously intersects $f_1''$ and $l_1''$.

By construction, the set $\cF_1(l_2)\cap \cF_1(f_2)$ is not empty. It is bounded on one side by $l_2$ and, by Observation \ref{obs_boundary_has_pf}, must be bounded on the other side by a pair of nonseparated leaves, say $h_1', h_1''$, with $h_1'$ intersecting $l_2$, and thus also intersecting $\cF_2(x)$. Necessarily, $h_1' \cap \cF_2(x)$ lies between $x$ and $l_1$, and by construction, leaves close to $f_2$ will simultaneously intersect $f_1''$ and $h_1''$ and we conclude as before. 
\end{proof}

We now have all the ingredients to show that $G$ has the closing property. 
\begin{proof}[Proof of Proposition \ref{prop_loom_implies_closing}]
Let $x\in P$ and let $R$ be a closed, trivially foliated rectangle containing $x$.  
By Lemma \ref{lem_building_tetrahedrons}, we can find a tetrahedron rectangle $H_x$ containing $x$ with $\cF_1(H_x) \subset \cF_1(R)$ and not sharing a side and three cusps of $H_x$ on the same side of $\cF_2(x)$. Similarly, take a tetrahedron $V_x$ containing $x$ with  $\cF_2(V_x) \subset \cF_2(R)$ and not sharing a side and three cusps on one side of $\cF_1(x)$.
Not in particular that $H_x$ and $V_x$ do not have a common cusp.

Let $R_x:= H_x\cap V_x$, and let $g$ be nontrivial.  We will show that, if 
$gR_x\cap R_x \neq \emptyset$ then $g H_x$ and $H_x$ have markovian intersection, as do $g V_x$ and $V_x$.  This implies that $g$ fixes a point in $H_x \cap V_x$, which implies the closing property.\footnote{Notice that this is a strong form of closing property where we can take $(R_x,R_x)$ as a closing pair.}

Since $gR_x\cap R_x$ is non-empty, it implies that both $gH_x\cap H_x$ and $gV_x\cap V_x$ are non-empty. Each intersections can be either markovian, or weakly markovian, or diagonal, given us a priori nine different possibilities.

We start by showing that neither $gH_x\cap H_x$ nor $gV_x\cap V_x$ can be diagonal. This follows immediately from our choice of tetrahedron rectangles and Observation \ref{obs_side_rectangles_moved_off}: Since this is symmetric, we do it for $H_x$. Consider the split of $H_x$ along the $\cF_2$-leaves ending at the cusps on the $\cF_1$-sides into three subrectangles $H_l, H_c, H_r$, where $H_c$ separates $H_l$ from $H_r$. By construction $x$ is not in $H_c$, as $\cF_2(x)$ has three cusps of $H_x$ on one of its side. So $x$, and therefore also $R_x$ is contained in either $H_l$ or $H_r$, say $H_l$. But Observation \ref{obs_side_rectangles_moved_off} gives us that $gH_l\cap H_l$ is empty, a contradiction.

Therefore, there are only three (up to symmetries) possible cases for the type of intersections of $gH_x\cap H_x$ and $gV_x\cap V_x$:
\begin{enumerate}[label=(\alph*)]
\item \label{item_weakly_markovian_twice} both $gH_x\cap H_x$ and $gV_x\cap V_x$ are weakly markovian,
\item \label{item_weakly_markovian_and_markovian}one of $gH_x\cap H_x$ or $gV_x\cap V_x$ is markovian and the other is weakly markovian,
\item \label{item_markovian_twice} both $gH_x\cap H_x$ and $gV_x\cap V_x$ are markovian.
\end{enumerate}

If case \ref{item_weakly_markovian_twice} happens, then we get that $g$ fixes one cusp of $H_x$ and a cusp of $V_x$, but since $H_x$ an $V_x$ do not share a common cusp, this contradicts Lemma \ref{lem_fix_unique_cusp}.

If case \ref{item_weakly_markovian_and_markovian} happens, then $g$ must fix a point in $P$ as well as a cusp, once again contradicting Lemma \ref{lem_fix_unique_cusp}.

Thus we are in case \ref{item_markovian_twice}: both $gH_x\cap H_x$ and $gV_x\cap V_x$ are markovian and hence $g$ fixes a point in $R_x$.
\end{proof}

\begin{rem} \label{rem.cyclic}
While not necessary to prove the theorem, we point out the following fact that is implied by the closing property satisfied by such a group $G$: Suppose that $G<\Aut^+(P)$ satisfies the assumption of Proposition \ref{prop_loom_implies_closing}. Then, \emph{for any leaf $l$ making a perfect fit, $\Stab_G(l)$ is at most cyclic.}

Indeed, by Proposition \ref{prop_one_fixed_point}, $\Stab_G(l)$ acts freely on $l$, so, by H\"older's Theorem, must be semi-conjugated to a group of translations of $\bR$. If $\Stab_G(l)$ is not cyclic or trivial, then it must be a indiscrete subgroup of translations, so as in the proof of Lemma \ref{lem_cyclic_stabilizer}, the closing property implies that some element of $\Stab_G(l)$ has a fixed point in $l$, contradicting that it acts freely.
%
\end{rem}

Now that we obtained the closing property, we are ready to adapt the proof of Theorem \ref{thm_transitivity_and_discrete_implies_uniform} to the loom space situation:
\begin{proposition}\label{prop_loom_space_implies_uniformly_hyperbolic}
Let $P$ be a loom space and $G < Aut^+(P)$. 
Then $G$ has uniformly hyperbolic fixed points.
\end{proposition}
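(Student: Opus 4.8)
The plan is to adapt the strategy of Theorem \ref{thm_transitivity_and_discrete_implies_uniform} to the loom space setting, where hyperbolicity of fixed points (Lemma \ref{lem_unique_fixed_point}) and the closing property (Proposition \ref{prop_loom_implies_closing}) are already established. We argue by contradiction: suppose there is a compact rectangle $R$ and a sequence $g_n \in G$ of distinct elements, each fixing a point $p_n \in R$, with $\cF_2(g_n R)$ strictly decreasing but not converging to a single leaf; by hyperbolicity (Lemma \ref{lem_unique_fixed_point}) the sequence $\cF_1(g_n R)$ is then strictly increasing. Monotonicity forces $g_n R$ to converge to a nondegenerate trivially foliated region $U$ with nonempty interior. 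Just as in the proof of Theorem \ref{thm_transitivity_and_discrete_implies_uniform}, the key is to locate inside $\mathring U$ a point fixed by a nontrivial element whose orbit under $G$ is closed and discrete — but in the loom space case we must \emph{produce} such a point rather than assume density of fixed points.

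First I would shrink $U$: by condition \ref{item_loom_tetrahedron} in Definition \ref{def_loom_space}, the open rectangle $\mathring U$ is contained in a tetrahedron rectangle $T$, and by Lemma \ref{lem_building_tetrahedrons} I can choose tetrahedron rectangles $H, V$ inside $\mathring U$ (with $\cF_i$-saturations strictly inside those of $U$) whose intersection $R_0 = H \cap V$ is a trivially foliated rectangle in $\mathring U$ on which the closing property applies in the strong form from the proof of Proposition \ref{prop_loom_implies_closing}: if $g R_0 \cap R_0 \neq \emptyset$ then $g$ fixes a point in $R_0$. Since $R_0 \subset \mathring U$, for all large $n, m$ we have $g_n R_0 \cap g_m R_0 \neq \emptyset$ (both contain points limiting into $\mathring U$), hence $g_n g_m^{-1}$ fixes a point $x_{n,m} \in g_m R_0 \subset \mathring U$; in particular it is a nontrivial element with a fixed point. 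This gives us nontrivial elements with fixed points accumulating in $\mathring U$.

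The heart of the argument is then to upgrade this to a single point fixed by elements using higher and higher powers of hyperbolicity. Fix $m$ large, set $R' = g_m R_0$ and $h_n = g_n g_m^{-1}$ for $n > m$; each $h_n$ fixes a point $x_n \in R'$, and $\cF_2(h_n R') = \cF_2(g_n R_0 \cdot(\text{something}))$ — more precisely, tracking saturations, $\cF_1(h_n R')$ and $\cF_2(h_n R')$ are monotone in $n$ with the same sense as $\cF_1(g_n R_0), \cF_2(g_n R_0)$. The fixed points $x_n$ all lie in the compact rectangle $\overline{R'}$; passing to a subsequence, $x_n \to x \in \overline{R'}$. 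Now I invoke Lemma \ref{lem_unique_fixed_point}: $x$ is not on any leaf making a perfect fit, so $\cF_1(x), \cF_2(x)$ are ``generic'' leaves, and I can apply the construction in that lemma's proof (the nonseparated-leaf quadrilaterals around $x$) to show that the monotone nested rectangles $h_n R'$, which all contain $x_n \to x$, must actually have $\cF_1$-saturation shrinking to $\cF_1(x)$: otherwise a full nondegenerate rectangle $U' \subset \bigcap_n h_n R'$ survives, but then for large $n, m$ the element $h_n h_m^{-1}$ carries a nondegenerate piece of $\cF_1(x_m) \cap U'$ onto a nondegenerate piece while fixing at most one point on that leaf, and the hyperbolic dynamics at $x$ (contracting on one side, expanding on the other, as in Figure \ref{fig_hyperbolic}) forces the $\cF_1$-width to go to zero — a contradiction with $U'$ being nondegenerate. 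This is exactly the loom-space substitute for the ``$\Stab_G(x)$ is cyclic, so powers of the generator contract'' step of Theorem \ref{thm_transitivity_and_discrete_implies_uniform}.

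The main obstacle, and the step to which I would devote the most care, is this last one: making precise why the accumulation of fixed points $x_n \to x$ combined with the perfect-fit geometry at $x$ (Lemmas \ref{lem_points_on_nonseparated}, \ref{lem_unique_fixed_point}) forces the relevant saturation to collapse to a single leaf. The clean way is probably to observe that, since $x$ is a hyperbolic fixed point of no element a priori but is a limit of fixed points, one uses the nonseparated-leaf ``corner markers'' $l_1, l_1', l_2, l_2'$ from the proof of Lemma \ref{lem_unique_fixed_point} (which exist because $\cF_1(x), \cF_2(x)$ make no perfect fits): these give $G$-equivariant points near $x$ detecting whether saturations can stabilize, and their $G$-orbits are forced to be closed and discrete by Remark \ref{rem.cyclic} applied to the perfect-fit leaves carrying them, yielding the eventual-constancy needed to conclude exactly as in Theorem \ref{thm_transitivity_and_discrete_implies_uniform}. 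Once the saturation is shown to shrink to $\cF_1(x)$, this contradicts the standing assumption that $\cF_1(g_n R)$ — equivalently $\cF_1(h_n R')$ up to the fixed translate by $g_m$ — must contain the nondegenerate region coming from $U$, completing the proof.
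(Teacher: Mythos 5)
There is a genuine gap, in fact two. First, your claim that $g_n R_0 \cap g_m R_0 \neq \emptyset$ for large $n,m$ ``because both contain points limiting into $\mathring U$'' is unjustified: $R_0$ sits inside $\mathring U$, but nothing controls where $g_n$ \emph{sends} $R_0$ -- the hypothesis only controls the saturations of $g_n R$, i.e.\ the images of $R$ itself. (The correct direction, used in Theorem \ref{thm_transitivity_and_discrete_implies_uniform}, is that $x\in\mathring U$ implies $g_n^{-1}x\in R$; but exploiting that requires closedness and discreteness of $G\cdot x$, which is a hypothesis there and is not available here.) Second, the step you yourself flag as the ``heart of the argument'' does not close: the appeal to Remark \ref{rem.cyclic} to get that the orbits of your ``corner marker'' points are closed and discrete is a non sequitur -- that remark only says stabilizers of perfect-fit leaves are at most cyclic, and gives no discreteness of orbits. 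Without some substitute for hypothesis (iii) of Theorem \ref{thm_transitivity_and_discrete_implies_uniform}, the ``powers of a single contraction'' mechanism you are trying to import simply is not set up.

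The missing idea is the global uniqueness of fixed points, which is the loom-space-specific fact that makes this proposition easy and which your argument never uses. Lemma \ref{lem_unique_fixed_point} says a nontrivial element of $G$ fixes at most one point in all of $P$, not merely one per leaf. The paper's proof runs: if $\cF_1(g_nR)$ decreases to a nondegenerate region $U$, then $\mathring U$ lies in a tetrahedron rectangle, so $\overline U$ has at least two genuine corners $u_1,u_2\in P$ (at most two corners of $U$ can be ideal points); these are limits of $g_nv_1,g_nv_2$ for corners $v_1,v_2$ of $R$, so the closing property applied in small neighborhoods of $u_1$ and of $u_2$ shows that $g_mg_n^{-1}$ (for $m,n$ large) has a fixed point near each of $u_1$ and $u_2$ -- two distinct fixed points, contradicting Lemma \ref{lem_unique_fixed_point}. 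I would recommend replacing your entire second and third paragraphs with this two-corner argument.
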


\begin{proof}
First, Lemma \ref{lem_unique_fixed_point} implies that $G$ has hyperbolic fixed points.
Let $R$ be a rectangle and suppose for contradiction that there is an infinite sequence $g_n\in G$ with fixed points in $R$ such that $\cF_1(g_nR)$ is a decreasing sequence.

Assume that $\cF_1(g_nR)$ does not converge to a single leaf, and call $U$ the limit of $g_nR$. Then $\mathring U$ is an open rectangle, so it is contained inside a tetrahedron rectangle $K$. This implies that $\bar U$ (the closure of $U$ in $P$) has at least 2 corners $u_1$ and $u_2$. In
other words at most 2 of the ``corners" (meaning the limits of two sides) of $U$ can be ideal points of the 
boundary of $K$. By construction, $u_1$ and $u_2$ are accumulated upon by sequences $g_nv_1, g_nv_2$ where $v_1,v_2$ are corners of the rectangle $R$. By the closing property, we deduce that for any fixed large enough $m,n$, the element $g_m g_n^{-1}$ has a fixed point as close as we want to $v_1$ and a fixed point as close as we want to $v_2$. In particular, $g_m g_n^{-1}$ has at least $2$ fixed points, contradicting Lemma \ref{lem_unique_fixed_point}.

Therefore, $G$ has uniformly hyperbolic fixed points.
\end{proof}

Given Propositions \ref{prop_loom_implies_closing} and \ref{prop_loom_space_implies_uniformly_hyperbolic}, we can deduce the first part of Theorem \ref{thm_loom_space}. More precisely, if $G<\Aut_1^+(P)$, then call $G_2$ the index at most $2$ subgroup in $G<\Aut^+(P)$. Then $G_2$  satisfies the hypothesis of Theorem \ref{thm_closing_implies_prop_disc}. Therefore, $G_2$, and thus also $G$, acts properly discontinuously on $W_1^>$. Now, if an element $g\in G$ fixes a point in $W_1^>$, then it fixes two points on the same $\cF_1$-leaf, and by Proposition \ref{prop_one_fixed_point}, it must be the identity. Thus $G$ acts freely on $W_1^>$. This shows that $W_1^>/G$ is a manifold.  What remains is to show that it is atoroidal.   To do this, we first show that $\bZ^2$
or $\bZ \rtimes \bZ$  subgroups of $\Aut^+_1(P)$ preserve cusps of $P$, then describe the structure of $W_1^>/G$ corresponding 
to such a cusp.  Since it is possible that $G$ does not preserve orientation, we treat the case of Klein bottles as well.

\begin{proposition} \label{prop_atoroidal}
Let $H < \Aut^+_1(P)$ be isomorphic to $\bZ^2$ or $\bZ \rtimes \bZ$.  Then $H$ fixes a unique cusp in $(P,\cF_1,\cF_2)$.
\end{proposition}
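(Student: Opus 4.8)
The plan is to first show that $H$ fixes a cusp at all, and then use the structure results already established (especially Lemma~\ref{lem_fix_unique_cusp}) to get uniqueness. For existence: since $H \cong \bZ^2$ or $\bZ \rtimes \bZ$, pick a nontrivial $g \in H$ lying in an infinite cyclic normal subgroup $\langle g \rangle \trianglelefteq H$ (the first $\bZ$-factor). The key point is that $g$ cannot act freely on $P$ ``in an irreducible way'': since $G = \langle g\rangle$ satisfies the hypotheses of Proposition~\ref{prop_loom_implies_closing} (any cyclic subgroup of $\Aut^+(P)$ does, as the closing property is about the whole action but really only uses individual elements), $g$ either fixes a point of $P$ or preserves a cusp. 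If $g$ fixes a point $x \in P$, then $x$ is the unique fixed point of $g$ by Lemma~\ref{lem_unique_fixed_point}, hence $x$ is fixed by the whole group $H$ (as $\langle g\rangle$ is normal and $h g h^{-1}$ fixes $h(x)$, forcing $h(x) = x$); but then $H < \Stab_G(x)$ is cyclic by Lemma~\ref{lem_cyclic_stabilizer}, contradicting $H \cong \bZ^2$ or $\bZ\rtimes\bZ$. So $g$ cannot fix a point of $P$.

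Therefore we must arrange that $g$ preserves a cusp. The cleanest route: I want to show that \emph{some} nontrivial element of $H$ preserves two distinct leaves of a common foliation, so that Lemma~\ref{lem_fix_unique_cusp} applies. If $g$ acts freely on $P$ but preserves no two leaves of a common foliation, then by Lemma~\ref{lem_fix_unique_cusp} (contrapositive of its structure) $g$ must act freely on both leaf spaces; but a free action on the leaf space of $\cF_1$ with the closing property for $g$ alone, combined with H\"older's theorem as in Lemma~\ref{lem_cyclic_stabilizer} and Remark~\ref{rem.cyclic}, forces $\langle g \rangle$ to... — here I would instead argue directly: consider a second element $h \in H$ generating a $\bZ^2$ or $\bZ\rtimes\bZ$ together with $g$. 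The commutator $[g,h]$ (in the $\bZ^2$ case) is trivial, so $h$ preserves the fixed-point set / cusp structure of $g$. If $g$ fixes a cusp $c$, then $h$ permutes the (finite? — no, but canonical) cusps fixed by $g$; since by Lemma~\ref{lem_fix_unique_cusp} $g$ fixes exactly one cusp $c$, we get $h(c) = c$, hence $H$ fixes $c$. So the whole argument reduces to: produce one nontrivial element of $H$ fixing a cusp.

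For that, suppose for contradiction that \emph{no} nontrivial element of $H$ fixes a cusp. Then by the dichotomy above, every nontrivial element of $H$ either fixes a (unique) point of $P$ or acts freely on both leaf spaces; and we ruled out the fixed-point case (it would make $H$ cyclic). So every nontrivial $h \in H$ acts freely on the leaf spaces of both $\cF_1$ and $\cF_2$. Now I would use the tetrahedron-rectangle structure: take any tetrahedron rectangle $R$; since $H$ is not cyclic and acts properly discontinuously on $W_1^>$ (by Theorem~\ref{thm_closing_implies_prop_disc}, which applies via Propositions~\ref{prop_loom_implies_closing} and \ref{prop_loom_space_implies_uniformly_hyperbolic}), the orbit of $R$ under $H$ must have elements $h_n R$ escaping compacta; using Lemma~\ref{lem_types_of_intersections}, whenever $h_n R \cap R \neq \emptyset$ the intersection is Markovian, weakly Markovian, or diagonal — the Markovian and weakly Markovian cases give a fixed point or fixed cusp, contradiction, so all nontrivial intersections are diagonal. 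Analyzing how a $\bZ^2$ can act with only diagonal intersections on nested tetrahedron rectangles (as in Proposition~\ref{prop_loom_space_implies_uniformly_hyperbolic}'s argument, where a shrinking sequence forces two fixed points) yields the contradiction: one finds $h_m h_n^{-1}$ with two fixed points, violating Lemma~\ref{lem_unique_fixed_point}.

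\textbf{The main obstacle} I expect is the last step — ruling out the possibility that a $\bZ^2$ or $\bZ \rtimes \bZ$ acts ``freely on both leaf spaces'' with no invariant cusp. The clean statements available (Lemmas~\ref{lem_unique_fixed_point} and \ref{lem_fix_unique_cusp}) are about \emph{individual} elements, so one has to leverage the group structure: the commuting (or semi-commuting) relation means the canonical objects (unique fixed point, unique fixed cusp) attached to one element must be respected by the other. The real content is showing that a group acting freely on $W_1^>$ that is \emph{not} virtually cyclic must have an element fixing a cusp; I anticipate this is where the tetrahedron-rectangle combinatorics of Lemma~\ref{lem_types_of_intersections} and the shrinking-sequence trick from Proposition~\ref{prop_loom_space_implies_uniformly_hyperbolic} do the real work, together with the observation that a $\bZ^2$ acting on a line (leaf space) factors through $\bZ$, so some nontrivial element fixes every leaf in a sub-collection and hence fixes a cusp. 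Once existence of a fixed cusp is in hand, uniqueness is immediate from Lemma~\ref{lem_fix_unique_cusp} applied to any nontrivial element of $H$ together with normality/commutation.
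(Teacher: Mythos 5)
There is a genuine gap. Your overall architecture (produce one nontrivial element fixing a cusp, then use commutation and the uniqueness from Lemma~\ref{lem_fix_unique_cusp} to make the whole of $H$ fix it) matches the paper, and your elimination of the fixed-point case is correct. But the step you yourself flag as ``the main obstacle'' --- ruling out that every nontrivial element of $H$ acts freely on both leaf spaces --- is the entire content of the proposition, and your sketch of it does not work as stated. First, the dichotomy you invoke early on (``$g$ either fixes a point of $P$ or preserves a cusp'') is false for individual elements of $\Aut^+(P)$: generic elements of an orbit-space action act freely on $P$ and on both leaf spaces and fix no cusp, and nothing in Proposition~\ref{prop_loom_implies_closing} or Lemma~\ref{lem_fix_unique_cusp} prevents this. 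Second, your fallback --- that all intersections $h_nR\cap R$ being diagonal leads, via the shrinking-sequence trick, to an element with two fixed points --- is not established: if $H$ acts freely on both leaf spaces, the orbit of a tetrahedron rectangle can simply escape every compact set with no nested containments of saturations at all, so Lemma~\ref{lem_types_of_intersections} never produces the Markovian configuration you need. Third, the parenthetical claim that ``a $\bZ^2$ acting on a line (leaf space) factors through $\bZ$'' is wrong: the leaf space is a non-Hausdorff $1$-manifold, and even a free $\bZ^2$-action on $\bR$ need not factor through $\bZ$ (it can be semi-conjugate to a dense group of translations), which is precisely the difficult case.

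The paper closes this gap with a substantially different argument: it invokes Barbot's axis theorem for free actions on the leaf space, uses commutativity to get a common axis $A_1$, $A_2$ for all of $H$ in each leaf space, and then studies the invariant region $\Omega=\{l_1\cap l_2 : l_i\in A_i\}\subset A_1\times A_2$ bounded by monotone graphs. The absence of infinite product regions forces the boundary functions to be finite-valued; the countability of leaves making perfect fits forces them to be locally constant on intervals; and the semi-conjugacy of the free $\bZ^2$-action to a dense group of translations then produces a recurrent point, to which the closing property applies and yields a fixed leaf --- contradiction. There is also a final step you omit entirely: an element fixing \emph{one} leaf is not enough to apply Lemma~\ref{lem_fix_unique_cusp} (which needs two distinct invariant leaves of the same foliation); the paper shows the fixed-leaf set must be infinite, because otherwise a finite-index subgroup of $H$ would act freely on a single leaf, contradicting H\"older's theorem plus the closing property as in Remark~\ref{rem.cyclic}. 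You would need to supply all of this to complete the proof.
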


The proof uses basic arguments in the theory of actions on bifoliated planes. 
The argument is very similar to the proof of \cite[Proposition 3.2.1]{BM_book}, we give the outline and indicate all points where the proof differs.   See also \cite[Theorem 15.18]{BJK25}.

\begin{proof}
Suppose $H<\Aut^+_1(P)$ is isomorphic to $\bZ^2$.  We will show that $H$ acts freely on $P$, and fixes a unique cusp.  The case of $\bZ \rtimes \bZ$ is then an immediate consequence, since the index 2, $\bZ^2$ subgroup, will fix a unique cusp, which must then necessarily be preserved by the full group. 

Let $g \in H \cong \bZ^2$ be nontrivial.  If $g$ fixes a point, it fixes only one point by Lemma \ref{lem_unique_fixed_point}.
Since $H$ is abelian, this point is invariant and hence fixed by all of $H$. However, since $G$ has the closing property (by Proposition \ref{prop_loom_implies_closing}) and hyperbolic fixed points (by lemma \ref{lem_unique_fixed_point}), Lemma \ref{lem_cyclic_stabilizer} applies and says that point stabilizers are cyclic, a contradiction.  

Thus $H$ acts freely on $P$.  We now want to show it fixes a leaf.  Suppose for contradiction that $H$ fixes no leaf, so its action on each leaf space is free.  
By \cite{Bar98b}, the action of each element $h\in H$ on the leaf space thus has an {\em axis}, which is either an invariant embedded copy of $\bR$ or a $\bZ$-union of (possibly degenerate) intervals $[a_i, b_i]$ where $a_i$ is nonseparated with $b_{i-1}$, along which $h$ acts by translation. In the latter case, $h$ would either shift the intervals $[a_i, b_i]$, or possibly be an orientation reversing shift.  Since $H$ is abelian, the axes of all elements coincide.  Thus, in the latter case where an axis is of the form $\bigcup_{i\in \bZ} [a_i, b_i]$, the induced action on $\bZ$ has nontrivial kernel and thus some element preserves each interval, hence fixes each leaf $a_i$.  This contradicts our assumption that $H$ acts freely on leaf spaces.  

Thus, we are left to deal with the case where the common axis for all elements of $H$ in the leaf spaces of $\cF_1$ and $\cF_2$ respectively are both embedded copies of $\bR$.  Denote these axes by $A_1$ and $A_2$, respectively, and consider the set $\Omega := \{ l_1 \cap l_2 : l_i \in A_i \} \subset P$, which is invariant under $H$.  Since $A_1$ and $A_2$ are both homeomorphic to $\bR$, the set $\Omega$ is isomorphic, as a bifoliated space, to a subset of $\bR \times \bR \cong A_1 \times A_2$ with standard coordinate foliations.  

One can now argue as in \cite[Proposition 2.11.9]{BM_book} that $\Omega$ is non-empty, connected, and each leaf of each $A_i$ meets it along an interval. (This uses only topology of the bifoliated plane and the fact that $H$ acts freely on its axes).
Moreover, the axis $A_i$ are properly embedded, as otherwise their boundary would contain infinitely many pairwise nonseparated orbits, which is impossible in a loom space.
  Thus, considered as a subset of $\bR \times \bR \cong A_1 \times A_2$, $\Omega$ is a connected region bounded above and below by the graphs of monotone (possibly discontinuous and weakly monotone) functions $s$ and $i$ (respectively) from $\bR$ to $\bR \cup \{\pm \infty\}$\footnote{Since the maps may be discontinuous, one can add vertical boundaries to the graphs at each discontinuity jumps in order to get the full boundary of $\Omega$.}. We now fix an orientation so that the upper boundary function $s$ is (weakly) increasing.  If $s$ takes the value $\infty$ somewhere, then (since it is weakly increasing) $s=\infty$ on some open interval $(a, +\infty)$. In that case, either the leaves of $A_2 \cong \bR$ exit every compact set as their parameter increases to $\infty$, giving an infinite product region (which is forbidden), or they limit to a leaf or pair of leaves which are necessarily invariant by $H$, contradicting the assumption of a free action on the leaf spaces.   Thus, we conclude the boundary function takes only finite values.   

Now, if for some $a \in A_1$ we have that $s(a) < s(b)$ when $a<b$, then the leaf $s(a)$ is the limit of leaves in $A_2$ intersecting $a$ and  must make a perfect fit with $a$.  Since loom spaces have only countably many leaves making perfect fits (because leaves making perfect fits are nonseparated with other leaves), we conclude that $s$ must have intervals on which it is locally constant.   Using the fact that the (free) action of $H$ is semi-conjugate to an action by a dense subgroup of translations and preserves the set on which $s$ is locally constant, one can use the proof of \cite[Proposition 3.2.1]{BM_book} to find a point $x \in P$ and  $h_k \in H$, with $\cF_1(x) \in A_1$ and such that $h_k(x)$ converges to $x$.  By the closing property, this gives a fixed leaf in $A_1$ for all $h_k$ with $k$ sufficiently large, which again is a contradiction with our assumption. 

We conclude that $H$ does not act freely on both leaf spaces, hence, there exists $h\in H$ that fixes some leaf $l$ either in $\cF_1$ or $\cF_2$.  

Since $H \simeq \bZ^2$, the set of leaves fixed by $h$ is invariant under $H$. If this set of
leaves is finite, then a finite index subgroup $H' < H$ would globally fix a leaf $l$. Notice that $H'$ must act freely on $l$, since $H$ acts freely on the whole plane $P$.  
Then using H\"older Theorem and the closing property, as in Remark \ref{rem.cyclic}, produces a contradiction.
We conclude that $h$ fixes infinitely many leaves.  In particular, Lemma \ref{lem_fix_unique_cusp} implies that $h^2\in \Aut^+(P)$ fixes a unique cusp, and thus so does $h$.  Since $H$ is abelian, this unique cusp is preserved by all elements of $H$, which is what we needed to show. 
\end{proof}

As a converse to Proposition \ref{prop_atoroidal}, we have the following.
\begin{proposition} \label{prop_cusp_stabilizers}
Let $(P,\cF_1,\cF_2)$ be a loom space.  Let $H < \Aut(P)$ be the stabilizer of a cusp, and assume $H$ nontrivial. Then, up to passing to an index 2 subgroup, $H$ is isomorphic to $\bZ$ or  $\bZ \times \bZ$. 
Furthermore, if $g \in G$ satisfies
$g^{-1} h g = h^{\pm 1}$ for some nontrivial $h \in H$, then $g \in H$.
\end{proposition}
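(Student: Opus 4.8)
The plan is to study the action of $H$ on the set $\mathcal L_c$ of all leaves (of either foliation) having the cusp $c$ as an ideal point. I would first pin down the local structure at $c$: chaining condition~\ref{item_loom_perfect_fit_nonseparated} of Definition~\ref{def_loom_space} with Observations~\ref{obs_no_double_perfect_fits} and~\ref{obs_boundary_has_pf}, one checks that the leaves of $\cF_1$ ending at $c$ form a bi-infinite sequence whose consecutive terms are non-separated, that the leaves of $\cF_2$ ending at $c$ do likewise, and that the perfect fits between the two sequences follow a fixed bi-infinite zig-zag; in other words $\mathcal L_c$ carries the combinatorial structure of a bi-infinite ladder. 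Every $\varphi\in H=\Stab_{\Aut(P)}(c)$ carries $\mathcal L_c$ onto $\mathcal L_{\varphi(c)}=\mathcal L_c$ preserving non-separation and perfect fits, so it induces a combinatorial automorphism of this ladder. Since the automorphism group of a bi-infinite ladder lies inside the infinite dihedral group $D_\infty=\bZ\rtimes\bZ/2$, this yields a homomorphism $\rho\colon H\to D_\infty$. (If $\mathcal L_c$ turns out to be finite or one-ended, the rest of the argument only simplifies.)

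Next I would show $H$ is torsion-free and bound $\ker\rho$ and $\rho(H)$. For torsion-freeness: a nontrivial finite-order element of $H$ generates a finite subgroup of $\Homeo(P)$, with $P\cong\bR^2$, which by a classical theorem has a global fixed point in $P$; but a nontrivial element fixing a cusp acts freely on $P$ by Lemma~\ref{lem_fix_unique_cusp}, the borderline cases — where some power of the element is a reflection of $P$ across a leaf, or a foliation-swapping involution with a fixed point — being excluded exactly as in the final paragraph of the proof of Proposition~\ref{prop_one_fixed_point}, together with uniqueness of fixed points on leaves (Lemma~\ref{lem_unique_fixed_point}). Now the kernel $K=\ker\rho$ fixes every leaf of $\mathcal L_c$; fixing a perfect-fit leaf $a_0\in\mathcal L_c\cap\cF_1$ gives $K\le\Stab_{\Aut(P)}(a_0)$, and by Proposition~\ref{prop_loom_implies_closing} and Remark~\ref{rem.cyclic} (equivalently, the argument of Lemma~\ref{lem_cyclic_stabilizer}) $\Stab_{\Aut^+(P)}(a_0)$ is at most infinite cyclic, so $\Stab_{\Aut(P)}(a_0)$ is virtually cyclic and hence, being torsion-free, $K$ is trivial or infinite cyclic. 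Since $\rho(H)\le D_\infty$ is virtually cyclic, passing to the $\rho$-preimage of a finite-index cyclic subgroup of $\rho(H)$ realizes $H$, up to finite index, as an extension of $\bZ$ (or $1$) by $\bZ$ (or $1$); thus $H$ is virtually $\bZ^2$, virtually $\bZ$, or finite. A torsion-free such group is $\bZ^2$, the Klein-bottle group $\bZ\rtimes\bZ$, $\bZ$, or trivial; as $H$ is nontrivial, it is infinite (being torsion-free), so it is one of the first three, each of which has a subgroup of index at most $2$ isomorphic to $\bZ$ or $\bZ^2$ — and a little orientation bookkeeping (ladder-reversing automorphisms reverse the cyclic order of leaves around $c$, hence reverse orientation of $P$) lets one take this subgroup to be the elements of $H$ preserving both foliations and both leafwise orientations. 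This proves the first assertion.

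The conjugacy statement is then immediate. Let $h\in H$ be nontrivial and $g\in G$ with $ghg^{-1}=h^{\pm1}$. Applying Lemma~\ref{lem_fix_unique_cusp} to a power of $h$ lying in $\Aut^+(P)$ (which is nontrivial since $h$ is not finite order), we see that $c$ is the \emph{unique} cusp fixed by $h$, hence also the unique cusp fixed by $h^{-1}$, hence the unique cusp fixed by $h^{\pm1}$. On the other hand, $ghg^{-1}$ fixes a cusp $c'$ if and only if $h$ fixes $g^{-1}(c')$, i.e.\ if and only if $c'=g(c)$; so $ghg^{-1}$ fixes precisely the cusp $g(c)$. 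Comparing with $ghg^{-1}=h^{\pm1}$ gives $g(c)=c$, that is, $g\in\Stab_{\Aut(P)}(c)=H$.

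The step I expect to be the main obstacle is the first one: establishing rigorously that the leaves ending at a cusp form a bi-infinite combinatorial ladder and that $H$ acts on it through $D_\infty$. This is where every loom-space axiom enters, and it rests on a somewhat delicate chain of perfect-fit arguments (morally the content of the classification in \cite[Section~15]{BJK25}). A secondary technical nuisance is disposing of finite-order foliation-swapping elements in the proof of torsion-freeness, and the orientation/foliation bookkeeping needed to sharpen the conclusion from ``finite index'' to ``index $2$''.
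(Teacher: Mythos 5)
Your proposal is correct and follows essentially the same route as the paper: a homomorphism from $H$ to $\bZ\rtimes\bZ/2\bZ$ via the natural linear order on the leaves ending at $c$, a kernel shown to be trivial or $\bZ$ by the H\"older/closing-property argument (Remark \ref{rem.cyclic}), exclusion of torsion, and the conjugacy statement deduced from the uniqueness of the fixed cusp in Lemma \ref{lem_fix_unique_cusp}. The paper avoids the ``ladder'' analysis you flag as the main obstacle by working directly with the linear order induced by the embedding in $P$, and it dispatches torsion more briefly (no order-2 orientation-reversing element exists since tetrahedron rectangles have no reflective symmetry), but these are presentational differences rather than a different argument.
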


\begin{proof}
Let $c$ be a cusp. 
The leaves of $\cF_1$ and $\cF_2$ ending at $c$ have a natural linear order coming from their embedding in $P$ and each element of $H$ either preserves or reverses this, depending on whether it preserves or reverses orientation on $P$.  Hence, one gets a homomorphism $H \to \bZ \rtimes \bZ/2\bZ$.  The kernel of this homomorphism fixes all leaves ending at $c$; we will next show that it is trivial or $\bZ$.  This follows by the same argument as in Lemma \ref{lem_cyclic_stabilizer}: the stabilizer of a leaf ending at $c$ is abelian by H\"older's theorem, since no element may fix a point and a cusp, and if non-cyclic an accumulation point gives the existence of a fixed point, contradicting the closing property. 

Note also that $\Aut(P)$ contains no order 2 element reversing orientation since tetrahedron rectangles have no reflective symmetry, thus the case $H = \bZ/2\bZ$ is excluded.  Thus, we conclude that the orientation-preserving subgroup of $H$ is isomorphic to $\bZ$ or $\bZ^2$. 

To prove the last statement, note that if $h g = g h^{\pm 1}$, and $h^{\pm 1}$ 
preserves $c$, then $h$ preserves the cusp $g(c)$.
By Lemma \ref{lem_fix_unique_cusp}, $h$ preserves a 
single cusp, and thus $g(c) = c$.  
\end{proof} 

Note that the case of a $\bZ$ stabilizer may indeed occur, for instance if one takes a pseudo-Anosov flow without perfect fits on a compact 3-manifold, drills out the singular orbits, and lifts to a cover that unwraps one of the torus boundary components to a cylinder.  
The orbit space is then a loom space and the stabilizer of the cusp
associated with the cylinder is $\bZ$.

\begin{rem}[Structure of cusp neighborhoods]  \label{rem_structure_cusp}
One can be slightly more precise about the description of the dynamics of the orbits of the flow near a cusp in the following sense:
If $H<\Aut_1(P)$ is a $\bZ^2$ or $\bZ\rtimes\bZ$-subgroup fixing a cusp $c$ in $P$, then we can consider the open $H$-invariant set $U$ defined by taking the union of all the $\cF_1$-leaves ending at $c$ together with the $\cF_1$-saturation of the $\cF_2$-leaves ending at $c$. Then, one can see that $H$ acts properly discontinuously on $N=(U\times U)\cap W_1^>$ and that the dynamics of the flow obtained on $N/H$ corresponds to that of a neighborhood (foliated by complete orbits) of a regular or prong periodic orbit where the periodic orbit is removed. More precisely, one can build an orbit equivalence between the flow on $N/H$ and a punctured neighborhood of a periodic orbit in the same way as in Theorem \ref{thm_transverse_orientable_model_flow}.
\end{rem}

Finally, we conclude the proof of Theorem \ref{thm_loom_space}, by describing the cusp ends of $M = W^>_1/G$ in the case where $G$ is finitely generated.  

\begin{proposition}[Cusps descend to cusps] 
Let $(P,\cF_1,\cF_2)$ be a loom space, $G < \Aut^+_1(P)$ be a finitely generated group, and $H < G$ be a maximal (with respect to inclusion) subgroup isomorphic to $\bZ^2$ or $\bZ \rtimes \bZ$.  Then, $W^>_1/G$ has an end homeomorphic to $T^2 \times [0, \infty)$ or $K \times [0, \infty)$ (respectively, where $K$ is the Klein bottle) preserved by $H$, and such that $\pi_1(T \times \{0\})$ (resp.~$\pi_1(K\times\{0\})$) is $H$.  
\end{proposition}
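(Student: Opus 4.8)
The plan is to realize the relevant end of $M := W^>_1/G$ as the quotient by $H$ of the cusp region $N$ built in Remark~\ref{rem_structure_cusp}, reading off its homeomorphism type from that remark. First I would upgrade Proposition~\ref{prop_atoroidal}: $H$ fixes a unique cusp $c$ of $P$, and in fact $H = \Stab_G(c)$. Indeed $\Stab_G(c)$ is torsion free (since $G$ acts freely on $W^>_1$ by Theorem~\ref{thm_loom_space}) and, by Proposition~\ref{prop_cusp_stabilizers}, virtually $\bZ$ or $\bZ^2$, hence finitely generated; a finitely generated torsion-free group that is virtually $\bZ$ or $\bZ^2$ is isomorphic to $\bZ$, $\bZ^2$, or $\bZ\rtimes\bZ$. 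Since it contains $H$, which is $\bZ^2$ or $\bZ\rtimes\bZ$, it is isomorphic to one of these, and maximality of $H$ forces $\Stab_G(c)=H$.

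Next I would set up the model. Let $U\subset P$ and $N=(U\times U)\cap W^>_1$ be the open region attached to $c$ in Remark~\ref{rem_structure_cusp}; as $U$ is canonically determined by $c$ and $H=\Stab_G(c)$, the set $N$ is $H$-invariant, and by that remark $H$ acts freely and properly discontinuously on $N$ with $N/H$ homeomorphic (via an orbit equivalence) to the complement of the core orbit in an open tubular neighbourhood of a periodic orbit. Hence $N/H\cong T^2\times(0,1)$ if $H\cong\bZ^2$ and $N/H\cong K\times(0,1)$ if $H\cong\bZ\rtimes\bZ$, one of the two product ends being the ``cusp end'' along which points escape toward the ideal point $c$. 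The covering $N\to N/H$ makes $\pi_1(N/H)$ surject onto $H$; since $\pi_1(N/H)$ is abstractly isomorphic to $H$, which is Hopfian, this surjection is an isomorphism, so $N$ is simply connected and $\pi_1(N/H)=H$. Adjoining the frontier torus (resp.\ Klein bottle) on the non-cusp side of $N/H$ yields a manifold homeomorphic to $T^2\times[0,\infty)$ (resp.\ $K\times[0,\infty)$) with the frontier as the $0$-level and the cusp end at infinity, the $0$-level inclusion being a homotopy equivalence.

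The heart of the argument is the injectivity statement: if $g\in G$ and $gN\cap N\neq\emptyset$ then $g\in H$. Since $U$ is $\cF_1$-saturated, $gN\cap N\neq\emptyset$ precisely when $U\cap gU$ contains a complete $\cF_1$-leaf. The $\cF_1$-leaves in $U$ are those ending at $c$ together with the $\cF_1$-leaves crossing some $\cF_2$-leaf ending at $c$; if such a leaf also lies in $gU$, then $g^{-1}$ returns it to this family, and — using Observations~\ref{obs_no_double_perfect_fits} and~\ref{obs_boundary_has_pf}, the perfect-fit/nonseparated pile structure at $c$, and the uniqueness of fixed cusps (Lemma~\ref{lem_fix_unique_cusp}) — one concludes that $g$ carries this pile to itself, so $gc=c$ and $g\in\Stab_G(c)=H$. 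The same analysis on the closure $\overline N$ gives $\Stab_G(\overline N)=H$. Granting this, $N/H\to M$ is an open embedding onto some $\Sigma$ and $\overline N/H\to M$ a proper embedding onto $\overline\Sigma\cong T^2\times[0,\infty)$ (resp.\ $K\times[0,\infty)$) with compact frontier $\partial\Sigma$; then $\overline\Sigma$ is closed in $M$, and for any compact $C\subset M$ the open set $T^2\times(s,\infty)\subset\Sigma$ (with $s$ chosen so $\overline\Sigma\cap C\subset T^2\times[0,s]$) is a neighbourhood in $M$ of the cusp end, so that end is an end of $M$ with end neighbourhood $\overline\Sigma$. Finally $\pi_1(\partial\Sigma)\cong\pi_1(\overline\Sigma)=\pi_1(N/H)=H$ inside $\pi_1(M)=G$, and the $H$-invariance of $N$ together with injectivity is precisely the statement that this end is preserved by $H$. (Finite generation of $G$ is used only to keep $\partial\Sigma$ compact.)

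I expect the injectivity step to be the main obstacle: showing the cusp region $N$ is rigid enough that only elements of $H$ map it back to itself requires a careful description of exactly which $\cF_1$-leaves lie in $U$ and of how the perfect-fit structure at $c$ constrains automorphisms preserving $U$. Everything else follows softly from Remark~\ref{rem_structure_cusp} and Propositions~\ref{prop_atoroidal} and~\ref{prop_cusp_stabilizers}.
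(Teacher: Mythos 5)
Your overall strategy differs fundamentally from the paper's, and the step you yourself flag as ``the main obstacle'' --- the precise invariance of $N$, i.e.\ that $gN\cap N\neq\emptyset$ forces $g\in H$ --- is not just hard but false in general. Since $U$ is $\cF_1$-saturated and $G$ preserves $\cF_1$, the condition $gN\cap N\neq\emptyset$ is equivalent to $gU\cap U\neq\emptyset$. But $U$ contains the full $\cF_1$-saturation of the $\cF_2$-leaves ending at $c$; downstairs this is the set of orbits whose stable leaf meets the unstable leaf of the drilled periodic orbit corresponding to $c$. For the loom spaces that actually arise (e.g.\ from a transitive pseudo-Anosov flow with an orbit drilled, such as the veering triangulation of the figure-eight knot complement), that unstable leaf is dense, so $U$ projects onto essentially all of $M$ and $gU\cap U\neq\emptyset$ for a great many $g\notin H$. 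The map $N/H\to M$ is an immersion of a cusp neighbourhood, not an embedding, and no argument about the perfect-fit ``pile'' at $c$ can rescue this: a leaf of $\cF_1$ can lie in $U$ and in $gU$ simply by crossing $\cF_2$-leaves ending at two different cusps $c$ and $g^{-1}c$.

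The paper's proof routes around exactly this difficulty using global $3$-manifold topology. It uses $N/H$ only to produce a $\pi_1$-injectively \emph{immersed} torus or Klein bottle $S'$ in $M$, then invokes the JSJ/torus decomposition of the aspherical manifold $M$ (this is the real use of finite generation of $G$, not merely compactness of a frontier) to replace $S'$ by an \emph{embedded} surface $S$ with $\pi_1(S)=H$, after ruling out the Seifert-fibered case via the conjugation statement in Proposition~\ref{prop_cusp_stabilizers}. Peripherality of $S$ is then proved by a dynamical argument: a component $A$ of $W_1^>\smallsetminus\wt S$ on the cusp side has the property that any forward orbit entirely contained in $A$ must start on an $\cF_1$-leaf ending at $c$; if $S$ were non-peripheral one could find $g\in G\smallsetminus H$ with $gA\subset A$, forcing $gc=c$ and hence $g\in H$, a contradiction. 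This uses only the one-sided trapping property of the complement of an embedded surface, which is much weaker than the precise invariance of $N$ your argument requires. To repair your proposal you would need to import this JSJ step (or some substitute for it); the soft parts of your write-up (identifying $H=\Stab_G(c)$ and the homeomorphism type of $N/H$) do agree with the paper.
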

 
\begin{proof} 
By Proposition \ref{prop_atoroidal}, $H$ preserves a unique cusp.  Since $H$ is assumed maximal, 
Proposition \ref{prop_cusp_stabilizers} implies that $H$ is the cusp stabilizer.  
By Remark \ref{rem_structure_cusp}, the space $W_1^>/H$ (which is a cover of $W_1^>/G$) has an end homeomorphic to a neighborhood of a regular or prong periodic orbit where the periodic orbit is removed.  Let $h\colon T \to W_1^>/H$ be an embedding of a torus or Klein bottle carrying the topology, and let $S'$ denote its image in $M =W_1^>/G$, which is a $\pi_1$-injectively immersed torus or Klein bottle.  

Since $G$ is finitely generated and $M$ is aspherical, the JSJ decomposition applies (See \cite[Theorem 3.4, 3.5]{Bon02} for a statement applicable in this level of generality, and \cite{BS87} for a proof), and 
 $S'$ is homotopic either into a Seifert fibered space or into a neighborhood of a boundary
component of an atoroidal piece. 

Suppose first that $S'$ is  homotopic into a Seifert piece $M_0$
of the torus decomposition.
Since $H$ is a subgroup isomorphic to either $\bZ^2$ or
$\pi_1(K)$, it follows that $H$ contains a power $\delta^n$ of the
element $\delta$ of $\pi_1(M_0)$ which represents
a regular fiber of the piece. 
The last statement of Proposition \ref{prop_cusp_stabilizers} implies that $\delta$
is in $H$. Then since for every $g$ in $\pi_1(M_0)$
we have $g^{-1} \delta g = \delta^{\pm 1}$ it follows
that $\pi_1(P) < H$, which is absurd.  

Thus we have that $S'$ is homotopic into a neighborhood of a boundary
component $S$ of an atoroidal piece, which is an embedded surface. The boundary $S$ consists of elements which commute or anticommute with elements in $\pi_1(S')$. By Proposition \ref{prop_cusp_stabilizers} and maximality of $H$ we have $\pi_1(S) = H$. Thus, $S'$ is actually 
homotopic to $S$. 

It remains to show that $S$ is peripheral.  
Let $\wt{S}$ be the lift of $S$ to $W_1^>$, and $\hat{S}$ the intermediate lift to $W_1^>/H$, which is homotopic to $h(T)$.  Thus, $\hat{S}$ separates $W_1^>/H$ into two connected components.  

By the description in Remark \ref{rem_structure_cusp}, one of the connected component of $W_1^>\smallsetminus \wt S$, call it $A$, is such that its projection $\bar A$ to $W_1^>/H$ is on the side of $\bar S$ that bounds the cusp $c$. From the description of the dynamics of the flow given in that remark, we also get that if a forward orbit $\{(x,y_t), t>0\}$ is entirely contained in $A$, then $x$ must be on one of the $\cF_1$-leaves that ends at $c$.

Now assume for a contradiction that $S$ is not peripheral. Then it is the common boundary torus of two distinct, or the same, atoroidal pieces. Hence, we can find an element $g\in G\smallsetminus H$ such that $g (A)\subset A$: If $S$ is nonseparating, just consider a loop in $M$ intersecting $S$ exactly once, and take $g$ the associated deck transformation; if $S$ is separating, consider instead a loop obtained by concatenating a (non-trivial) loop on one side of $S$ with a (non-trivial) one on the other side, and again take $g$ to be the associated deck transformation.

Now considering a forward orbit $\{(x,y_t), t>0\}$ entirely contained in $A$, we get that $\{g(x,y_t), t>0\}$ is contained in $g(A)$ so is in particular contained in $A$, thus $gx$ must be on one of the $\cF_1$-leaves ending at the cusp $c$. This implies that $gc=c$, so $g\in H$, a contradiction.
We conclude $S$ is peripheral, as desired.
\end{proof}

\bibliographystyle{amsalpha}
\bibliography{orbit_space_ref}

\end{document}